\documentclass[11pt]{amsart}
\usepackage[margin=3cm]{geometry}
\usepackage{bm}
\usepackage{amsthm}
\usepackage{amssymb}
\usepackage{mathrsfs}
\usepackage{hyperref,xcolor,graphicx}
\usepackage[foot]{amsaddr}

\newcommand{\R}{\mathbb{R}}
\newcommand{\Z}{\mathbb{Z}}
\newcommand{\e}{\epsilon}

\DeclareMathOperator{\supp}{supp}
\DeclareMathOperator{\ind}{ind}

\begin{document}

\begin{abstract} 
In this article, we generalize our previous results \cite{Chen2021mean} joint with Pedro Gaspar to higher dimensions, prove the existence of (infinitely many) eternal weak mean curvature flows in $S^{n+1}$ (for all $n \geq 2$) connecting a Clifford hypersurface $S^1(\sqrt{\frac{1}{n}}) \times S^{n-1} (\sqrt{\frac{n-1}{n}})$ to the equatorial spheres $S^n$.
\end{abstract}

\title[Symmetric mean curvature flow on the n-sphere]{Symmetric mean curvature flow on the n-sphere}
\author{Jingwen Chen\textsuperscript{1}}
\address{\parbox{\linewidth}{\textsuperscript{1} Department of Mathematics, The University of Chicago\\ \smallskip 5734 S University Ave, Chicago IL, 60615}}
\email{jingwch@uchicago.edu}
\maketitle

\newtheorem{thm}{Theorem}
\newtheorem*{thm*}{Theorem}
\newtheorem{lem}{Lemma}
\newtheorem{prop}{Proposition}
\newtheorem{coro}{Corollary}

\theoremstyle{definition}
\newtheorem*{defi}{Definition}
\newtheorem*{ex}{Example}
\newtheorem*{rmk}{Remark}

\section{Introduction}

In the previous paper \cite{Chen2021mean} joint with Pedro Gaspar, we study low energy solutions of the \emph{parabolic Allen-Cahn equation}
\begin{equation} \label{PAC}
\e \, \partial_t u = \epsilon \Delta_g u - \frac{1}{\epsilon} W'(u).\tag{PAC}
\end{equation}
in the $3$-sphere in connection with the \emph{mean curvature flow} (MCF). Here $W$ is a nonnegative double-well potential with two wells at $\pm 1$, such as $W(u)=(1-u^2)^2/4$. We show that there are (weak) solutions to the MCF connecting Clifford tori to equatorial spheres constructed as the singular limit of solutions to \eqref{PAC} in $S^3$, as $\e \downarrow 0$, and study a family of such limit flows. The main theorem we proved is the following:

\begin{thm*} [Chen-Gaspar, \cite{Chen2021mean}]
There exist (infinitely many) eternal weak mean curvature flows $\{\Sigma_t\}$ in $S^3$ connecting a Clifford torus ($S^1(\sqrt{\frac{1}{2}}) \times S^1(\sqrt{\frac{1}{2}}) \subset S^3$) to equatorial spheres $S^2$. These flows are smooth for large $|t|$.    
\end{thm*}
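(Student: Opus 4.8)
The plan is to produce these eternal weak flows as singular limits, as $\e\downarrow0$, of \emph{eternal} solutions of the parabolic Allen--Cahn equation \eqref{PAC} on $S^3$, using an isometric symmetry to pin down the relevant minimal hypersurfaces. Write $S^3=\{(z,w)\in\mathbb C^2:|z|^2+|w|^2=1\}$, let $T=\{|z|=|w|=\tfrac1{\sqrt2}\}$ be the Clifford torus, let $\Pi=\{\operatorname{Re}w=0\}$ be an equatorial $S^2$, and let $G$ be the compact group of isometries generated by the rotations $z\mapsto e^{i\theta}z$ and the reflection $w\mapsto\bar w$. Both $T$ and $\Pi$ are $G$-invariant, and so are the model Allen--Cahn configurations $u^T_\e$ (a profile around $T$, depending only on $|z|$) and $u^\Pi_\e$ (an optimal transition layer across $\Pi$, depending only on $\operatorname{Re}w$); for $\e$ small these are non-degenerate critical points of $E_\e(u)=\int_{S^3}\bigl(\tfrac\e2|\nabla u|^2+\tfrac1\e W(u)\bigr)$ restricted to $G$-invariant functions, and since \eqref{PAC} preserves $G$-invariance I work throughout in the $G$-equivariant class.

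A Jacobi-operator computation in the $G$-invariant class gives that $u^T_\e$ has Morse index $2$ and $u^\Pi_\e$ has Morse index $1$: the single unstable mode of $\Pi$ (the ``uniform shrinking'' direction) survives the symmetry reduction, while all but two of the five unstable modes of $T$ are removed, and neither critical point has a kernel. Moreover $E_\e(u^T_\e)$ and $E_\e(u^\Pi_\e)$ converge, as $\e\downarrow0$, to (the Allen--Cahn constant times) $\operatorname{Area}(T)=2\pi^2$ and $\operatorname{Area}(\Pi)=4\pi$, so that $E_\e(u^T_\e)>E_\e(u^\Pi_\e)>0=E_\e(\pm1)$. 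Finally I record the rigidity input used below: a short direct check shows that the only $G$-invariant stationary integral varifolds of $S^3$ of mass $\le2\pi^2$ are the multiplicity-one Clifford torus, the two multiplicity-one $G$-invariant equatorial $2$-spheres, and the empty varifold.

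The core step is to construct, for each small $\e$, an eternal solution $u_\e(\cdot,t)$ of the $G$-equivariant Allen--Cahn flow (the $L^2$-gradient flow of $E_\e$ on the $G$-invariant subspace) with $u_\e(\cdot,t)\to u^T_\e$ as $t\to-\infty$ and $u_\e(\cdot,t)\to u^\Pi_\e$ as $t\to+\infty$. I would obtain it as a heteroclinic orbit in $W^u(u^T_\e)\cap W^s(u^\Pi_\e)$: realize $u^\Pi_\e$ as the mountain-pass critical point of $E_\e|_{G\text{-inv}}$ between the ground states $u\equiv\pm1$ (its value being the first $G$-equivariant Allen--Cahn width, which tends to the constant times $4\pi$), realize $u^T_\e$ as a two-parameter min--max critical point at the next level (tending to the constant times $2\pi^2$), note that by the rigidity input there is no critical value strictly between these two, and then run a Conley-index / deformation-lemma argument --- using that the $\omega$-limit of each trajectory in $W^u(u^T_\e)$ is a single stationary point by the \L{}ojasiewicz--Simon inequality --- to produce a whole one-parameter family of such connecting orbits. \emph{This connecting-orbit step is the part I expect to be the main obstacle.} One must certify that the two min--max levels are genuinely consecutive (no intermediate $G$-invariant minimal surface) and that a subfamily of the downward trajectories from $u^T_\e$ actually converges to $u^\Pi_\e$ and does not relax directly to the ground states --- equivalently, that along these trajectories the interface does not simply contract onto one of the singular orbits $\{z=0\}$ or $\{w=0\}$ and vanish; I would exclude this by a barrier/avoidance argument confining the relevant trajectories to the energy band $E_\e(u^\Pi_\e)\le E_\e\le E_\e(u^T_\e)$.

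Finally, normalise $u_\e$ so that $E_\e(u_\e(\cdot,0))=E_\e(u^T_\e)-\delta$ for a fixed small $\delta>0$, and let $\e\downarrow0$. Ilmanen's convergence theorem for the elliptic regularisation of \eqref{PAC} then gives, after passing to a subsequence, an eternal integral Brakke flow $\{\Sigma_t\}_{t\in\R}$ in $S^3$ which is $G$-invariant and whose mass --- by the no-loss (energy quantization) property of Allen--Cahn limits together with the energy band above --- lies in $[4\pi,2\pi^2]$ for every $t$. Huisken's monotonicity formula forces the $\alpha$- and $\omega$-limits of $\{\Sigma_t\}$ to be $G$-invariant stationary integral varifolds, and the normalisation forces the $\alpha$-limit to have mass arbitrarily close to $2\pi^2$ and the $\omega$-limit to have mass $4\pi$; by the rigidity input these are the multiplicity-one Clifford torus and a multiplicity-one equatorial $S^2$. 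Since near a smooth, multiplicity-one minimal hypersurface a Brakke flow is, by White's local regularity theorem, a smooth graphical flow converging exponentially, $\{\Sigma_t\}$ is smooth for $|t|$ large (and it must be singular at some intermediate time, since the genus drops from $1$ to $0$). The infinitude of distinct such flows comes from the one-parameter family of connecting orbits above, and alternatively from the circle of $SO(2)_z$-invariant equatorial spheres through $\{w=0\}$, to each of which the construction --- carried out with a conjugate symmetry group --- attaches a distinct eternal flow from the Clifford torus.
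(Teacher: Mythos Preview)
Your overall architecture --- eternal \eqref{PAC} solutions connecting Allen--Cahn approximations of $T$ and an equator, then Ilmanen--Tonegawa to a Brakke flow, then a low-area rigidity statement to identify the $\pm\infty$ limits --- matches the paper's. The gap is exactly where you flag it: the connecting-orbit step. The paper does \emph{not} use Conley index, separate min--max realizations of $u^T_\e$ and $u^\Pi_\e$, or an energy-band barrier. Instead it invokes Choi--Mantoulidis to parametrize the unstable manifold of $u_\e^{-\infty}$ by a ball $B_\eta\subset\R^I$ (with $I$ the full Morse index, $I=5$ in $S^3$), the flow line through $a$ being tangent at $u_\e^{-\infty}$ to $\sum a_i\varphi_i$. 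Since the first eigenfunction satisfies $\varphi_1>0$, the maximum principle together with a Frankel-type property of Hiesmayr gives $\mathscr S_\e(\pm r,0,\dots,0)\to\pm1$ as $t\to+\infty$; the sets $U_\pm$ of parameters flowing to $\pm1$ are shown to be open, so by connectedness any path in $\partial B_r$ joining these two antipodes contains some $a$ with $\mathscr S_\e(a)\not\to\pm1$. \L ojasiewicz--Simon then forces its forward limit to be a single nonconstant critical point of lower energy, which the rigidity input pins down as a ground state. This openness/connectedness trick replaces your entire Conley-index paragraph and needs no avoidance argument.

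Two secondary points. Your rigidity input is not a ``short direct check'': even with the continuous $SO(2)$ you impose, the stationary integral varifold case requires a dimension-reduction argument and the cyclic-mod-$2$ (vanishing $\Z_2$-boundary) condition to rule out triple junctions, as in Lemma~\ref{minimals4}; in the $S^3$ case the paper in fact drops the symmetry hypothesis entirely and uses Marques--Neves's Willmore resolution (via Choi--Mantoulidis). Also, the original $S^3$ proof in \cite{Chen2021mean} imposes only \emph{discrete} symmetries (the three involutions listed at the end of Section~\ref{preliminary}), works on the full index-$5$ unstable sphere, and then selects a symmetric path there; your continuous $SO(2)$ is closer to the $SO(n)$ strategy of the present paper than to the original proof. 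Finally, the $\pm\infty$ limits of the Brakke flow are identified via Brakke compactness (constant-area limit flows are stationary) together with the parity information of Lemma~\ref{odd-even-density}, not via Huisken monotonicity.
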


The proof relies on the fact that the Clifford torus is the embedded, non-totally geodesic, minimal surface of least area $2 \pi^2$ in $S^3$, which was proved by Marques-Neves in their resolution of the Willmore conjecture \cite{marques2014min}. The main difficulty to generalize the result to higher dimensions is that the Solomon-Yau conjecture, which states that the area of one of the minimal Clifford hypersurfaces gives the lowest value of area among all non-totally geodesic closed minimal hypersurfaces of $S^{n+1}$, is still open for $n \geq 3$.

Here we give the expression of the Clifford-type minimal hypersurfaces:
\begin{align*}
T_{p,q} = S^p (\sqrt{\frac{p}{n}}) \times S^q (\sqrt{\frac{q}{n}}) \subset S^{n+1},
\end{align*}
where $p,q$ are positive integers, and $n = p+q \geq 2$. $T_{p,q}$ has Morse index $n+3$ (see \cite{perdomo2001low}) and nullity $(p+1)(q+1)$ (see \cite{hsiang1971minimal}).

However, under a symmetry assumption, we have the classification of low area minimal hypersurface. For a compact minimal rotational hypersurface $M$ in $S^{n+1}$, i.e. $M$ is invariant under the group of rotations $SO(n)$ (considered as a subgroup of isometries of $S^{n+1}$), Perdomo-Wei \cite{perdomo2015n} (for $2 \leq n \leq 100$) and Cheng-Wei-Zeng \cite{MR4334413} (for all dimensions) showed that the area of $M$ equals to either the area of the equator $S^n$, or the area of the Clifford hypersurface $T_{1,n-1}$, or greater than $2(1 - \frac{1}{\pi}) \text{Area}(T_{1,n-1})$. \smallskip

From a variational and dynamical viewpoint, the equation \eqref{PAC} may be seen as the (negative) gradient flow of the \emph{Allen-Cahn energy functional}
    \[E_{\epsilon}(u)= \int_{M} \left(\frac{\epsilon}{2} |\nabla_{g} u|^2 + \frac{1}{\epsilon} W(u)\right).\]

In this article, by using the classification of low area compact minimal rotational hypersurfaces, we study $SO(n)$ invariant orbits of this gradient flow in $S^{n+1}$ which connect low energy stationary solutions, and describe the mean curvature flows they originate, as $\e \downarrow 0$. We prove:

\begin{thm} \label{main}
For sufficiently small $\e>0$, there are eternal solutions $\{u_\e\}$ of \eqref{PAC} such that 
    \[u_\e^{-\infty} = \lim_{t \to -\infty} u_\e(\cdot, t) \quad \text{and} \quad u_\e^{+\infty} = \lim_{t \to +\infty} u_\e(\cdot ,t)\]
are the symmetric critical points of $E_\e$ which accumulate on a Clifford hypersurface $T_{1,n-1}$ and on an equatorial sphere $S^n$, respectively, as $\e \downarrow 0$. Furthermore, the limit $\{\mu_t\}_{t \in \R}$ of the associated measures
    \[\mu_{\e,t}:=\left(\frac{\e}{2} \, |\nabla u_\e(\cdot, t)|^2 + \frac{W(u_\e(\cdot, t))}{\e} \right)\,d\mu_g\]
is a $SO(n)$ invariant unit-density Brakke flow on $S^{n+1}$ which converges to an equatorial sphere and to the same Clifford hypersurface, as $t \to \pm \infty$, respectively.

Moreover, there exists a $1$-parameter family $\{V_t(a)\}_{t \in \R}$ of Brakke flows, parametrized by $a \in S^1$, joining
the Clifford hypersurface $T_{1,n-1}$ to equatorial spheres $S^n$. Such flows are smooth for large $|t|$ and
depend equivariantly on $a$.
\end{thm}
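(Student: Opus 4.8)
The plan is to carry out the entire argument inside the class of $SO(n)$-invariant functions, where the classification of low-area compact minimal rotational hypersurfaces recalled above determines all the relevant stationary states. Write $S^{n+1}=\{(x,y)\in\R^2\times\R^n:|x|^2+|y|^2=1\}$ and let $SO(n)$ act on the $y$-factor. Since \eqref{PAC} is equivariant, the negative gradient flow of $E_\e$ preserves the closed subspace $H^1_{SO(n)}(S^{n+1})$, and the Clifford hypersurface $T_{1,n-1}$, the equatorial spheres $S^n(a)=\{a_1v_1+a_2v_2=0\}$ ($a=(a_1,a_2)\in S^1$), and the constants $\pm1$ all belong to it. The first step is to classify the critical points of $E_\e|_{H^1_{SO(n)}}$ with energy at most $c_0\,\text{Area}(T_{1,n-1})+o(1)$, where $c_0=\int_{-1}^{1}\sqrt{2W}$. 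By the $\Gamma$-convergence of $E_\e$, every such critical point concentrates, as $\e\downarrow0$, on an $SO(n)$-invariant stationary integral varifold of mass $\le\text{Area}(T_{1,n-1})$; by the classification of Perdomo--Wei \cite{perdomo2015n} and Cheng--Wei--Zeng \cite{MR4334413} together with the elementary inequality $\text{Area}(S^n)<\text{Area}(T_{1,n-1})<2\,\text{Area}(S^n)$, such a varifold can only be a single totally geodesic $S^n$ or the Clifford hypersurface $T_{1,n-1}$, each with multiplicity one (the third family in the classification has area exceeding $\text{Area}(T_{1,n-1})$, and two copies of $S^n$ exceed $\text{Area}(T_{1,n-1})$ as well). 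Hence, for $\e$ small, the symmetric critical points in this range are precisely: the constants $\pm1$; an $S^1$-family $u_\e^{S^n}(a)$ accumulating on the equators $S^n(a)$; and a solution $u_\e^T$ accumulating on $T_{1,n-1}$. A computation with the Jacobi operator $\Delta_{T_{1,n-1}}+|A|^2+\Ric(\nu)=\Delta_{T_{1,n-1}}+2n$, restricted to functions constant along the $S^{n-1}$-factor, shows that $T_{1,n-1}$ has Morse index $3$ and nullity $0$ in the $SO(n)$-invariant class (the ``breathing'' mode together with the two first harmonics on the $S^1$-factor; cf.\ \cite{perdomo2001low} for the unrestricted index $n+3$), whereas each equator $S^n(a)$ has index $1$ and a single zero mode --- the direction of the $S^1$-family. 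Correspondingly, $u_\e^T$ is a nondegenerate critical point of symmetric Morse index $3$, and $\{u_\e^{S^n}(a):a\in S^1\}$ is a nondegenerate critical circle of index $1$ (using also that the Morse index of an Allen--Cahn critical point equals that of its limiting minimal hypersurface for $\e$ small).

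The core step is to produce a heteroclinic orbit of the symmetric flow from $u_\e^T$ to some $u_\e^{S^n}(a_0)$. Since $u_\e^T$ is nondegenerate of symmetric index $3$, its unstable manifold $W^u(u_\e^T)\subset H^1_{SO(n)}$ is an embedded $3$-disk; along the flow $E_\e$ decreases strictly off equilibria, the Palais--Smale condition holds, and orbits converge (by the {\L}ojasiewicz--Simon inequality, $W$ being analytic) to single equilibria. Since there are no critical values in $\bigl(c_0\,\text{Area}(S^n),\,c_0\,\text{Area}(T_{1,n-1})\bigr)$, every orbit of $W^u(u_\e^T)\setminus\{u_\e^T\}$ converges, as $t\to+\infty$, either to a constant $\pm1$ or to a point of the critical circle $\{u_\e^{S^n}(a)\}$. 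To see that the second alternative is realized, one runs a linking / Conley-index argument as in \cite{Chen2021mean}: $c_0\,\text{Area}(T_{1,n-1})$ is the width of an $SO(n)$-equivariant min-max for $E_\e$ along a sweepout that remains nontrivial above the equator level, so a sublevel set just below $E_\e(u_\e^T)$ cannot retract onto the constants, forcing the $\omega$-limit of a suitable $2$-sphere slice of $W^u(u_\e^T)$ to meet $\{u_\e^{S^n}(a)\}$; the dimension count $\dim W^u(u_\e^T)=3>1=\operatorname{codim}W^s(\{u_\e^{S^n}(a)\})$ makes this intersection possible. Finally, the construction is $SO(2)$-equivariant: rotation of the $\R^2$-factor is a symmetry of \eqref{PAC} that fixes $u_\e^T$ (which, being the unique critical point near $T_{1,n-1}$ in the symmetric class, is $SO(2)\times SO(n)$-invariant) and acts transitively on the equator circle. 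Hence, once one connecting orbit $t\mapsto u_\e(a_0)(\cdot,t)$ is found, its $SO(2)$-rotates provide, for every $a\in S^1$, an eternal solution $u_\e(a)$ of \eqref{PAC} with $\lim_{t\to-\infty}u_\e(a)=u_\e^T$ and $\lim_{t\to+\infty}u_\e(a)=u_\e^{S^n}(a)$, depending equivariantly on $a$. Taking $u_\e=u_\e(a)$ for a fixed $a$ yields the first two conclusions, with $u_\e^{-\infty}$ and $u_\e^{+\infty}$ accumulating on $T_{1,n-1}$ and on $S^n$ as $\e\downarrow0$.

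It remains to pass to the limit $\e\downarrow0$. The bound $E_\e(u_\e(a)(\cdot,t))\le c_0\,\text{Area}(T_{1,n-1})$, uniform in $t$ and $\e$, together with Huisken-type monotonicity, places us in the setting of Ilmanen's and Tonegawa's convergence theorems for the Allen--Cahn flow: along a subsequence $\e\downarrow0$ the measures $\mu_{\e,t}$ converge to an integral Brakke flow $\{\mu_t\}_{t\in\R}$, which is $SO(n)$-invariant and, writing $V_t(a)$ for the associated varifolds, depends $SO(2)$-equivariantly on $a$. Unit density of $\mu_t$ for a.e.\ $t$ follows as in \cite{Chen2021mean} from the $L^1$-convergence of the phases and the mass bound $<2c_0\,\text{Area}(S^n)$, which prevents the limiting interface from having multiplicity $\ge2$. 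For the behaviour at $t=\pm\infty$, the spectral gap of the linearizations at the nondegenerate equilibria $u_\e^T$ and $u_\e^{S^n}(a)$ gives exponential convergence of $u_\e(a)(\cdot,t)$ with a rate bounded below uniformly in $\e$, which lets one interchange the limits $t\to\pm\infty$ and $\e\downarrow0$; hence $\mu_t\rightharpoonup\mathcal H^n\llcorner T_{1,n-1}$ as $t\to-\infty$ and $\mu_t\rightharpoonup\mathcal H^n\llcorner S^n(a)$ as $t\to+\infty$. Near $t=-\infty$ the flow is then a smooth graph over $T_{1,n-1}$ --- indeed a branch of the symmetric unstable mean curvature flow emanating from it --- and near $t=+\infty$ a smooth graph over $S^n(a)$ converging to it; so the flow is a classical mean curvature flow for $|t|$ large, with any singular times confined to a bounded interval. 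This produces the desired $1$-parameter equivariant family $\{V_t(a)\}_{t\in\R}$.

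The main obstacle is the heteroclinic step: one must show that the $3$-dimensional symmetric unstable manifold of $u_\e^T$ genuinely reaches the level of the equatorial solutions, rather than collapsing entirely onto the constants $\pm1$. In the $S^3$ case of \cite{Chen2021mean} this could be read off from the Marques--Neves area bound \cite{marques2014min}; in higher dimensions the Solomon--Yau conjecture is open, so it must be extracted instead from the symmetry-constrained classification \cite{perdomo2015n,MR4334413}, through a careful comparison of the min-max widths detecting $S^n$ and $T_{1,n-1}$ --- which is precisely why the whole argument has to be confined to the $SO(n)$-invariant class. A secondary technical point is the multiplicity-one conclusion for the limit Brakke flow, which again rests on the strict inequality $\text{Area}(T_{1,n-1})<2\,\text{Area}(S^n)$.
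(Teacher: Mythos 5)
Your overall strategy (work in the $SO(n)$-invariant class, classify low-energy symmetric critical points, connect the Clifford solution to the equator solutions by a heteroclinic orbit, pass to a Brakke flow, and rotate in the $x_1x_2$-plane to get the $S^1$-family) matches the paper, but three steps are asserted where the real work lies, and at least the first is a genuine gap. You apply the Perdomo--Wei / Cheng--Wei--Zeng classification directly to the singular limits of symmetric critical points and to the $t\to\pm\infty$ limits of the Brakke flow. Those limits are a priori only $SO(n)$-invariant \emph{stationary integral varifolds}, not smooth compact minimal rotational hypersurfaces, and the mass bound $\mathrm{Area}(T_{1,n-1})<2\,\mathrm{Area}(S^n)$ does \emph{not} rule out singular models: since $\mathrm{Area}(T_{1,n-1})/\mathrm{Area}(S^n)>3/2$, a varifold with triple-junction (density $3/2$) singularities is consistent with your density estimate. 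The paper's Lemma \ref{minimals4} is exactly the missing ingredient: an inductive dimension-reduction argument which upgrades the smooth classification to the varifold level, and it needs the extra hypothesis that the associated $\Z_2$ chain is a cycle (cyclic mod $2$, via Lemma \ref{odd-even-density} and White's theorem for the flow, and the Hutchinson--Tonegawa energy-loss remarks for the elliptic limits) precisely to exclude the $k=3$ junction case, which cannot be excluded by area alone. Your proposal never invokes this $\Z_2$ structure, so the classification step and the identification of $V_{\pm\infty}$ are unjustified as written.

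Two further steps are weaker than what is needed. First, your identification of the forward/backward limits of the limit Brakke flow by ``uniform spectral gap and interchange of the limits $t\to\pm\infty$ and $\e\downarrow 0$'' is not justified: the transition time of $u_\e$ between the two equilibria is not uniformly controlled, and the limit flow's behavior at $t=\pm\infty$ is not directly inherited from the fixed-$\e$ asymptotics. The paper instead normalizes time by fixing $E_\e(u_\e(\cdot,0))=2\sigma\cdot 1.2\,\mathrm{Area}(S^n)$, uses monotonicity of energy to trap $\|\Sigma_{\pm t}\|$ on either side of this level, and then identifies $V_{\pm\infty}$ by the varifold classification together with $SO(n)$-invariance and the vanishing $\Z_2$ boundary; graphical convergence as $t\to-\infty$ comes from Choi--Mantoulidis plus Brakke regularity. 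Second, your heteroclinic step rests on an unproven claim that $2\sigma\,\mathrm{Area}(T_{1,n-1})$ is an equivariant min-max width, plus a dimension count that only makes the intersection ``possible.'' The paper's argument is more elementary and complete: using $\varphi_1>0$, the parabolic maximum principle and the Frankel-type property, the endpoints $\mathscr{S}(\pm r,0,\dots,0)$ flow to $\pm1$; the basins $U_\pm$ are open; so some point $a_\e$ on a $G_2$-fixed path in the $(\varphi_1,\varphi_2)$-plane gives a flow not converging to $\pm1$, and the rigidity Lemma \ref{equatorial} (which again relies on the varifold classification plus Hiesmayr's rigidity and the ground-state uniqueness) forces its forward limit to be a ground state. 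If you repair the classification step at the varifold level and replace the limit-interchange by the energy-normalization argument, your outline becomes essentially the paper's proof.
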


\subsection*{Outline of the proof}

Using the rotation invariant property, the dimension reduction argument, and an inductive argument, we are able to generalize Cheng-Wei-Zeng's result \cite{MR4334413} to the stationary integral varifold case. The existence of stationary $SO(n)$ invariant solutions $u_{\e}^{-\infty}$ to \eqref{PAC} which have $T_{1,n-1}$ as their limit interface was shown by \cite{caju2019solutions, hiesmayr2020rigidity}. Using the classification of low area $SO(n)$ invariant stationary integral varifold, we can prove a rigidity result of low energy $SO(n)$ invariant critical points of $E_{\e}$.

Using the techniques from the previous work \cite{Chen2021mean}, we are able to construct $SO(n)$ invariant solutions $\{u_{\e}(\cdot, t)\}$ to \eqref{PAC}, connecting $u_{\e}^{-\infty}$ and a ground state solution (least energy unstable solutions of the Allen-Cahn equation, which are symmetric critical points with nodal sets exactly along equatorial spheres $S^n$ by \cite{caju2020ground}).

The convergence of $\{u_\e(\cdot, t)\}$ to an integral Brakke flow on $S^{n+1}$ can be derived from \cite{ilmanen1993convergence} and \cite{sato2008simple,tonegawa2003integrality}. This flow is \emph{cyclic mod 2}, in the sense of White \cite{white2009currents}. We then use the area bounds for the limit interfaces obtained from $u_\e^{\pm \infty}$, and the $SO(n)$ invariant property, to describe the forward and backward limit of this flow using the classification of low area $SO(n)$ invariant stationary integral varifolds in $S^{n+1}$. 

\subsection*{Organization}
In Section \ref{preliminary}, we state some results concerning the Allen-Cahn equation and give a brief description of the results from the previous paper. In Section \ref{symmetry and rigidity}, we study the symmetry and rigidity of solutions to \eqref{AC} with low energy. In Section \ref{sec:main}, we show the existence of $SO(n)$ invariant solutions to \eqref{PAC} connecting the Allen-Cahn approximation of the Clifford hypersurface and the equatorial sphere, and the corresponding Brakke flow. In the Appendix, we give proofs of several inequalities regarding the area of the minimal Clifford hypersurfaces. 

\subsection*{Acknowledgements}

We would like to express our gratitude to Andr\'e Neves for his support and numerous invaluable discussions and suggestions. Additionally, we extend our thanks to Pedro Gaspar, Yangyang Li, Daniel Stern, and Ao Sun for their insightful conversations. We are also appreciative of all the constructive comments provided by the referee.

\section{preliminary} \label{preliminary}

\subsection{The Allen-Cahn equation, induced varifolds and convergence} \label{ACminimal}

\

\begin{defi}
Let $(M^{n+1},g)$ be a Riemannian manifold. We define the \emph{Allen-Cahn energy} on $\Omega$ by:
\[E_{\epsilon}(u):= \int_{\Omega} \left(\frac{\epsilon}{2} |\nabla_{g} u|^2 + \frac{1}{\epsilon} W(u)\right) d\mu_{g}, \ \ u \in W^{1,2}(M),\]
where $d\mu_g$ is the volume measure with respect to $g$.
\end{defi}
Here $W(\cdot)$ is a double-well potential, the standard example is the function $W(t) = \frac{1}{4}(1 - t^2)^2$. Hereafter, we fix such a potential $W$.

One can check that $u$ is a critical point of $E_{\epsilon}$ on a closed manifold $(M^{n+1},g)$ if and only if $u$ (weakly) solves the \emph{elliptic Allen-Cahn equation}:
\begin{equation} \label{AC}
\epsilon^2 \Delta_g u -  W'(u)=0 \quad \text{on} \ M. \tag{AC}
\end{equation}

We write $\sigma = \int_{-1}^1 \sqrt{W(t)/2} dt$. This is the energy of the \emph{heteroclinic solution} $\mathbb{H}_\e(t)$ of \eqref{AC} on $\R$, namely, the unique bounded solution in $\R$ (modulo translation) such that $\mathbb{H}_\e(t) \to \pm 1$ when $t \to \pm \infty$. 

For the quadratic form given by the second variation of the energy $E_\e$ at $u$, we can define the \emph{Morse index} and the \emph{nullity} of a solution $u$ of \eqref{AC} (as a critical point of $E_\e$), denoted  $\ind_\e(u)$ and $\text{nul}_{\e}(u)$, as the number of negative eigenvalues and the dimension of the kernel of the linear operator 
    \[\mathcal{L}_{\e,u}(f) = \Delta f - \frac{W''(u)}{\e^2}f,\]
counted with multiplicity, respectively. 

We note that given $\e>0$ and a sufficiently regular function $u$ on $M$ (so that almost every level set is a regular hypersurface), we can consider the \emph{associated $n$-varifolds} $V_{\e,u}$ defined by
    \begin{equation} \label{def:varifold}
        V_{\e,u}(\phi) = \frac{1}{2}\int_{M\cap \{\nabla u \neq 0\}} \phi(x,T_x\{u=u(x)\})\cdot\left(\frac{\e|\nabla u(x)|^2}{2}+\frac{W(u(x))}{\e}\right)\,d\mu_g(x)
    \end{equation}
for any continuous function $\phi$ defined in the Grassmannian manifold $G_{n-1}(M)$, where $V_{\e,u}(\phi)$ denotes the integral of $\phi$ on $G_{n-1}(M)$ with respect to $V_{\e,u}$. We write $\mu_{\e,u}=\|V_{\e,u}\|$ for the associated Radon measure on $M$ (the \emph{weight measure} of $V_{\e,u}$). In the case where $\{u_j\}$ are solutions to \eqref{AC} or \eqref{PAC}, with $\e = \e_j \downarrow 0$, we will write ${V_{\e_j,t}=V_{\e_j,u_j(\cdot,t)}}$ and ${\mu_{\e_j,t} := \mu_{\e_j,u_j(\cdot,t)}}$.

We have the following convergence results for solutions to \eqref{AC} and \eqref{PAC}.

\begin{thm*}[\cite{hutchinson2000convergence,tonegawa2012stable,guaraco2018min}]
Let $(M^{n+1},g)$ be a closed Riemannian manifold. Let $\{u_j\}$ be a sequence of solutions of \eqref{AC} with $\e=\e_j \downarrow 0$. Suppose that $\sup_j E_{\e_j}(u_j) < \infty$. Then we can find a (not relabeled) subsequence of $u_j$ such that $V_{\e_j}$ converge to a stationary $n$ varifold $V$ on $M$ such that $\frac{1}{\sigma}V$ is integral. Moreover,
    \[\frac{1}{\sigma}\|V\|(M) = \lim_{j \to \infty} \frac{1}{\sigma} \|V_{\e_j}\|(M) = \lim_{j \to \infty}\frac{1}{2\sigma}E_{\e_j}(u_{j}),\]
and $u_{j}$ converges uniformly to $\pm 1$ in compact subsets of $M \setminus \supp \|V\|$.

Furthermore, if $n\geq 2$ and if $\sup_j \ind_{\e_j}(u_j)<\infty$, then $\supp\|V\|$ is a smooth, embedded, minimal hypersurface in $M$ away from a closed set of Hausdorff dimension $\leq (n-7)$.
\end{thm*}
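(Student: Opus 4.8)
The plan is to reproduce the now-standard argument combining the Allen--Cahn compactness and stationarity of Hutchinson--Tonegawa, the integrality criterion of Tonegawa \cite{tonegawa2003integrality,hutchinson2000convergence}, and — for the index-bounded regularity — the stability theory of \cite{tonegawa2012stable,guaraco2018min} together with Wickramasekera's regularity theorem. I only indicate the structure.

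\emph{Step 1: compactness and the first variation identity.} The uniform bound $\|V_{\e_j}\|(M)=\tfrac12 E_{\e_j}(u_j)\le C$ and compactness of Radon measures on $G_{n-1}(M)$ let us pass to a subsequence with $V_{\e_j}\rightharpoonup V$ as varifolds and $\mu_{\e_j}=\|V_{\e_j}\|\rightharpoonup\mu=\|V\|$. Equation \eqref{AC} is equivalent to $\operatorname{div}T_{\e}=0$ for the stress--energy tensor $T_\e:=\bigl(\tfrac{\e}{2}|\nabla u|^2+\tfrac{W(u)}{\e}\bigr)g-\e\,du\otimes du$ (the ambient curvature does not appear, since the Hessian of a function is symmetric); testing against a vector field $X$ and splitting $\nabla X$ into its parts tangent and normal to $\nu=\nabla u/|\nabla u|$ yields
\[
\delta V_{\e,u}(X)=\tfrac12\int_M \xi_\e\,\langle\nabla_\nu X,\nu\rangle\,d\mu_g,\qquad \xi_\e:=\tfrac{\e}{2}|\nabla u|^2-\tfrac{W(u)}{\e},
\]
so $|\delta V_{\e_j,u_j}(X)|\le \|X\|_{C^1}\int_M|\xi_{\e_j}|\,d\mu_g$. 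In the same way one derives an almost-monotonicity formula for $r\mapsto e^{\Lambda r}\mu_{\e_j,u_j}(B_r(x))/r^n$, with error terms controlled by the (bounded) curvature of the closed manifold $M$ and by the discrepancy.

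\emph{Step 2: vanishing of the discrepancy (the main obstacle).} The crux is $\int_M|\xi_{\e_j}|\,d\mu_g\to 0$. For the positive part one runs a maximum-principle argument in the spirit of Modica's gradient estimate, valid up to an error that vanishes as $\e\to 0$ because $M$ is closed, giving $\int_M \xi_{\e_j}^+\,d\mu_g\to 0$. For the negative part one combines the uniform monotonicity formula of Step 1 with a blow-up analysis: at $\mu$-a.e.\ $x\in\supp\mu$ a tangent plane exists, and rescaling $u_{\e_j}$ around $x$ at scale $\e_j$ converges to a one-dimensional heteroclinic profile crossed with that plane, whose discrepancy vanishes identically; a Besicovitch-covering plus Fatou argument upgrades this to $\int_M\xi_{\e_j}^-\,d\mu_g\to 0$. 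Hence $\delta V=0$, so $V$ is stationary. Because $M$ is closed there is no loss of mass, so $\|V\|(M)=\lim_j\|V_{\e_j}\|(M)=\lim_j\tfrac12 E_{\e_j}(u_j)$; dividing by $\sigma$ gives the stated mass identity.

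\emph{Step 3: locally uniform convergence and integrality.} An $\e$-regularity (``clearing out'') statement for \eqref{AC} — small normalized energy on $B_{2r}(x)$ forces $\|u_j\mp 1\|_{C^0(B_r(x))}\to 0$ via interior elliptic estimates and the structure of $W$ — shows that $u_j\to\pm 1$ locally uniformly on $M\setminus\supp\|V\|$, with locally constant sign; in particular $\supp\|V\|$ is precisely the set where $\mu$ has positive lower density. For integrality one invokes Tonegawa's criterion \cite{tonegawa2003integrality}: using the monotonicity formula, the vanishing of the discrepancy, and the one-dimensional profile of blow-ups, $\Theta(\|V\|,x)$ is a positive integer multiple of $\sigma$ for $\|V\|$-a.e.\ $x$, and $\tfrac1\sigma V$ is an integral varifold.

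\emph{Step 4: the index-bounded regularity.} Assume now $n\ge 2$ and $\sup_j\ind_{\e_j}(u_j)\le k<\infty$. By the stability theory for the Allen--Cahn equation (\cite{tonegawa2012stable}; see also \cite{guaraco2018min}), after a further subsequence the limit minimal varifold $\tfrac1\sigma V$ is stable as a varifold on $M\setminus P$ for a finite set $P$ with $\#P\le k$: a negative second-variation direction for $\tfrac1\sigma V$ supported away from $P$ would, through the recovery of the second variation by $\mathcal{L}_{\e_j,u_j}$, produce $k+1$ mutually orthogonal negative directions for $\mathcal{L}_{\e_j,u_j}$ for large $j$, contradicting the index bound. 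Wickramasekera's regularity theorem for stable codimension-one integral varifolds then shows $\supp\|V\|$ is a smooth, embedded, minimal hypersurface on $M\setminus P$ away from a closed set of Hausdorff dimension $\le n-7$; restoring the finitely many points of $P$ gives the conclusion on all of $M$. The genuinely hard inputs are the vanishing of the discrepancy in Step 2 and Wickramasekera's regularity theorem used in Step 4; the remaining steps are soft compactness and covering arguments.
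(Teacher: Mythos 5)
The paper does not prove this theorem; it is quoted as background from \cite{hutchinson2000convergence,tonegawa2012stable,guaraco2018min}, so your sketch can only be measured against the standard literature argument. Steps 1--3 are a faithful outline of that argument: the stress--energy identity and almost-monotonicity, the $L^1$-vanishing of the discrepancy (Modica-type bound for the positive part, monotonicity plus blow-up for the rest), clearing-out off the support, and Tonegawa's integrality criterion; this matches Hutchinson--Tonegawa and the account in \cite{guaraco2018min}.

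Step 4 has two genuine weak points. First, the stability-transfer logic is reversed: the standard argument pigeonholes instability of the solutions themselves --- if there were $k+1$ disjoint balls in each of which $u_j$ is unstable for infinitely many $j$, then $\ind_{\e_j}(u_j)\geq k+1$ --- so that, along a subsequence, $u_j$ is stable in a small ball around every point outside a set $P$ of at most $k$ points, and one then applies the \emph{local} theory of \cite{tonegawa2012stable} for stable solutions (which also verifies the no-classical-singularity hypothesis needed to invoke Wickramasekera, something a bare citation of Wickramasekera's theorem does not give). Your variant, which transfers a single negative direction of the limit varifold back to $u_j$, would require a recovery-of-second-variation theorem for a possibly singular, possibly higher-multiplicity limit; that is not available at this stage of the proof and is not needed. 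Second, ``restoring the finitely many points of $P$'' does not yield the stated conclusion: for $2\leq n\leq 6$ the theorem asserts an empty singular set, whereas your argument leaves open possible singularities at the points of $P$ (a finite set has dimension $0>n-7$ there). One must show that stability of the regular part extends across these isolated points --- a capacity/logarithmic cutoff argument, using that the hypersurface dimension is $n\geq 2$ --- and then run the stable regularity theory in full balls around the points of $P$ as well; only then is the singular set of dimension at most $n-7$ on all of $M$.
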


The minimal surface $\supp\|V\|$ is often called a \emph{limit interface} obtained from $u_j$. Solutions with limit interface $\Gamma$ are called the \emph{Allen-Cahn approximation} of $\Gamma$.

We state below the main convergence result we will use in the present article, which follows from \cite{ilmanen1993convergence} and the work of Tonegawa \cite{tonegawa2003integrality} (see also \cite{sato2008simple} and \cite{takasao2016existence}):

\begin{thm*}
Let $(M^{n+1},g)$ be a closed Riemannian manifold. Let $\{u_j\}$ be a sequence of solutions to \eqref{PAC} on $M \times [t_0,\infty)$ with $\e=\e_j \downarrow 0$. Suppose that there exist constants $c_0,E_0>0$ such that
\begin{enumerate}
    \item[(a)] $\sup_{M\times [t_0,\infty)}|u_j| \leq c_0$, for all $j$,
    \item[(b)] $E_{\e_j}(u_j(\cdot,t)) \leq E_0$, for all $t\geq t_0$ and all $j$, and
    \item[(c)] $\int_{M\times(t_0,\infty)}\e_j|\partial_t u_j|^2\,d\mu_g \leq E_0$, for all $j$.
\end{enumerate}
Write $\mu_{\e_j,t} = \mu_{\e,u_j(\cdot, t)}$, for every $t\geq t_0$ and every $j$. Then, passing to a subsequence (not relabeled), there are Radon measures $\{\mu_t\}_{t\geq t_0}$ such that
\begin{enumerate}
    \item[(i)] $\mu_{\e_j,t} \to \mu_t$  as Radon measures on $M$, and
        \[\frac{1}{2\sigma}\lim_{j \to\infty} E_{\e_j}(u_j(\cdot,t)) = \frac{1}{\sigma}\lim_{j \to \infty}\|\mu_{\e_j,t}\|(M) = \frac{1}{\sigma}\|\mu_t\|(M),\]
    for every $t \in [t_0,\infty)$.
    \item[(ii)] For a.e. $t>t_0$, $\mu_t$ is $n$-rectifiable, and its density is $N(x)\sigma$, for $\mu_t$-a.e. $x \in M$, where $N(x)$ is a nonnegative integer.
    \item[(iii)] $\mu_t$ satisfies the mean curvature flow in the sense of Brakke, namely:
        \[\overline{D}_t \int_M \phi\,d\mu_t \leq \int_M (-\phi)\|H_t\|^2 + \langle\nabla \phi, H_t \rangle \, d\mu_t,\]
    for any $C^2$ function $\phi\geq 0$. Here $\overline{D}_t$ denotes the upper derivative, and $H_t$ is the generalized mean curvature vector of $\mu_t$.
\end{enumerate}
\end{thm*}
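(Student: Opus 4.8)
The plan is to assemble this statement from Ilmanen's construction of Brakke flows from the parabolic Allen--Cahn equation \cite{ilmanen1993convergence}, together with Tonegawa's integrality theorem \cite{tonegawa2003integrality} (see also \cite{sato2008simple}), along the following four ingredients: (1) a parabolic monotonicity formula giving uniform density-ratio bounds; (2) weak-$*$ compactness of the energy measures; (3) asymptotic equipartition of energy, i.e.\ vanishing of the discrepancy; and (4) a blow-up analysis for integrality of the time-slices. Once these are in place, the Brakke inequality is read off directly from the evolution equation satisfied by the energy density.

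\emph{Step 1: compactness and (i).} For each fixed $j$, \eqref{PAC} is (after the rescaling $t \mapsto t/\e_j$) the $L^2$ gradient flow of $E_{\e_j}$, and the dissipation identity $\tfrac{d}{dt}E_{\e_j}(u_j(\cdot,t)) = -\int_M \e_j |\partial_t u_j|^2\, d\mu_g$ shows $t \mapsto E_{\e_j}(u_j(\cdot,t))$ is non-increasing; in particular hypothesis (c) is automatic from (b), and $\|\mu_{\e_j,t}\|(M) = \tfrac12 E_{\e_j}(u_j(\cdot,t))$ is uniformly bounded. I would apply Helly's selection theorem at the times of a countable dense subset of $[t_0,\infty)$, diagonalize, and then extend to every $t$ using the uniform-in-$j$ control on the variation of $t \mapsto \int_M \phi\, d\mu_{\e_j,t}$ provided by the computation in Step 3, obtaining (after passing to a subsequence) $\mu_{\e_j,t} \to \mu_t$ as Radon measures for every $t \ge t_0$. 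Since $M$ is compact, testing against $\phi \equiv 1$ gives $\|\mu_{\e_j,t}\|(M) \to \|\mu_t\|(M)$, which with the monotone convergence of $E_{\e_j}(u_j(\cdot,t))$ is exactly (i).

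\emph{Step 2: equipartition and integrality (ii).} Let $\zeta_\e := \tfrac\e2 |\nabla u_\e|^2 - \tfrac1\e W(u_\e)$ be the discrepancy density. Differentiating $\int_M \rho_{(x_0,t_0)}\, d\mu_{\e,t}$ against the localized backward heat kernel and using \eqref{PAC} yields Ilmanen's parabolic monotonicity formula, in which the upward variation is controlled by integrals of $\zeta_\e$ and of $\e |\partial_t u_\e|^2$; combined with the parabolic analogue of Modica's gradient estimate and the global bound (c), this gives uniform density-ratio bounds and --- the heart of the argument --- the vanishing of the discrepancy measures for a.e.\ $t \ge t_0$. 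Fixing such a $t$, the vanishing of the discrepancy gives that $\tfrac1\sigma V_{\e_j,t}$ subconverges to a rectifiable varifold with a.e.\ positive density (Hutchinson--Tonegawa); and since $\e_j \Delta u_j - \e_j^{-1} W'(u_j) = \e_j \partial_t u_j$ satisfies $\int_M \e_j^{-1}(\e_j \partial_t u_j)^2\, d\mu_g = \int_M \e_j |\partial_t u_j|^2\, d\mu_g$, whose time-integral is bounded by $E_0$, Tonegawa's integrality theorem upgrades rectifiability to integrality of $\tfrac1\sigma \mu_t$, so its density is $N(x)\sigma$ with $N(x) \in \Z_{\ge 0}$, which is (ii). This step --- proving the discrepancy vanishes for a.e.\ time, and the blow-up analysis behind Tonegawa's integrality theorem --- is the part I expect to be the main obstacle; the rest is bookkeeping once it is available.

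\emph{Step 3: Brakke's inequality (iii).} With $e_\e(u) := \tfrac\e2 |\nabla u|^2 + \tfrac1\e W(u)$, equation \eqref{PAC} gives the pointwise identity
\[
\partial_t e_\e(u) \;=\; \operatorname{div}\!\big(\e\,\partial_t u\,\nabla u\big) \;-\; \e\,|\partial_t u|^2,
\]
hence for every $\phi \in C^2(M)$ with $\phi \ge 0$,
\[
\frac{d}{dt} \int_M \phi\, d\mu_{\e,t} \;=\; -\int_M \e\,\partial_t u\,\nabla u \cdot \nabla \phi \, d\mu_g \;-\; \int_M \phi\,\e\,|\partial_t u|^2\, d\mu_g.
\]
Rewriting the last term as $-\int_M \phi\, \tfrac1\e (\e \partial_t u)^2\, d\mu_g$ and using the first-variation formula of \cite{tonegawa2003integrality} to identify the generalized mean curvature $H_t$ of $\mu_t$ as the $L^2$-limit of the Allen--Cahn mean curvature $(\e\partial_t u)\,\nabla u / |\nabla u|^2$, the vanishing of the discrepancy together with weak lower semicontinuity gives $\limsup_j \big( -\int_M \phi\, \e_j |\partial_t u_j|^2\, d\mu_g \big) \le -\int_M \phi\, |H_t|^2\, d\mu_t$, while the transport term passes to $\int_M \langle \nabla \phi, H_t \rangle\, d\mu_t$. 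Integrating in $t$, passing to the limit along the subsequence, and localizing in time then yields Brakke's inequality in the upper-derivative form (iii).
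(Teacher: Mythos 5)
Your outline is correct and follows exactly the route the paper itself takes: this theorem is not proved in the paper but quoted as a consequence of Ilmanen's construction \cite{ilmanen1993convergence} together with the integrality results of Tonegawa \cite{tonegawa2003integrality} (see also \cite{sato2008simple,takasao2016existence}), and your Steps 1--3 (energy dissipation and Helly-type compactness, vanishing of the discrepancy plus the blow-up/integrality analysis, and the limit of the localized energy identity yielding Brakke's inequality) are precisely the content of those references. The two ingredients you single out as the main obstacles --- a.e.-in-time vanishing of the discrepancy and Tonegawa's integrality theorem --- are indeed the nontrivial analytic core, and the paper likewise defers them entirely to the cited works.
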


\subsection{Results and notations from previous paper}
\

We summarize the results from the previous paper \cite{Chen2021mean} joint with P. Gaspar. Some results and techniques will be adapted to be used in this article.

For a solution $u_{\e}^{-\infty}$ on $S^{n+1}$ to \eqref{AC} with Morse index $I$, by the result of Choi-Mantoulidis \cite{choi2022ancient}, where they show the existence of ancient gradient flows and uniqueness under integrability conditions, and the theory of parabolic PDEs, we show that there is a family of gradient flows of $E_{\e}$,
\begin{align*}
\mathscr{S}_{\e}: B_{\eta} \subset \R^I \to C^2(S^{n+1} \times \R)
\end{align*}
such that $\lim\limits_{t \to -\infty} \mathscr{S}_{\e}(\cdot, t) = u_{\e}^{-\infty}$. The curve $t \to \mathscr{S}_{\e}(a)(\cdot, t)$ is tangent at $u_{\e}^{-\infty}$ to eigenfunctions $\{\varphi_j\}_{j=1}^I$ of 
\begin{align*}
D^2E_{\e}(u_{\e}^{-\infty}) = \e \Delta - \frac{1}{\e} W''(u_{\e}^{-\infty})
\end{align*}
with coefficients $a = (a_1, \cdots, a_I)$.

When $I \geq 2$, for small $r < \eta$, using the fact that the first eigenfunction $\varphi_1 > 0$, the maximum principle for parabolic equations, and the Frankel-type property proved by Hiesmayr \cite{hiesmayr2020rigidity}, we prove that $\mathscr{S}_{\e}(\pm r, 0, \cdots, 0)$ converge to $\pm 1$ as $t \to +\infty$.

Let $\mathcal{S} = \{u \in C^2(M) \mid |u|\leq 1\}$. By Lemma 2.3 in \cite{GG} (and the continuous dependence of initial data, see e.g. Cazenave-Haraux \cite{CH}), there is a continuous map 
	\[\Phi:\mathcal{S} \times [0,\infty) \to W^{1,2}(M)\]
such that $\Phi(u,\cdot):M \times [0,\infty) \to \R$ is a solution of \eqref{PAC} defined for all $t \geq 0$ with $\Phi(u,0) = u$, and such that $\Phi(u,t) \in \mathcal{S}$, for all such $t$. 

By an analysis argument, we show that the sets
\begin{align*}
U_{\pm} = \{u \in \mathcal{S} \mid \|\Phi(t,u)- (\pm 1)\|_{W^{1,2}(M)} \to 0, \ \text{as} \ t\to +\infty\}
\end{align*}
are open. By connectedness, we show that along any curve connecting $(\pm r, 0, \cdots, 0) \in B_{\eta}$, there exists a point $a$ on it such that $\mathscr{S}_{\e}(a)$ does not converge to $\pm 1$ as $t \to +\infty$.

In the previous paper, we study solutions and flows on $S^3$. By analyzing the limit interfaces of the backward limit and the forward limit of $\mathscr{S}_{\e}(a)$ (on $S^3$) we mentioned above, which uses a parity argument and the classification of low area stationary integral varifold in $S^3$ based on the resolution of the Willmore conjecture by Marques-Neves \cite{marques2014min}, we show the existence of gradient flow of $E_{\e}$ connecting the Allen-Cahn approximations of a Clifford torus and an equatorial sphere.

Using the convergence results for solutions to \eqref{PAC} mentioned above, we show the limit of the gradient flows of $E_{\e}$ described above is a Brakke flow $\{V_t\}$. Using the energy estimate and the parity argument again, we show that the backward limit and the forward limit are a Clifford torus and an equatorial sphere, respectively.

By Choi-Mantoulidis \cite{choi2022ancient} rigidity of ancient gradient flows, we proved a symmetry argument for solutions to \eqref{PAC}. We described a relation between the isometries on $S^3$ and the isometries on the negative eigenspaces of the linearized Allen-Cahn operator at $u_{\e}^{-\infty}$. From this relation, we observe that if $a \in B_{\eta}$ is fixed by an isometry that preserves $u_{\e}^{-\infty}$, then so are $\mathscr{S}_{\e}(a)(\cdot, t)$, the limit Brakke flow $V_t$ and the limits
$V_{\pm \infty}$ (the limit of $V_t$ as $t \to \pm \infty$).

Using the symmetry argument and a continuity argument, we give a precise description of the backward limit and the forward limit, and therefore obtain a $2-$parameter family of Brakke flows generated by rotations on $S^3$.

In the previous paper, we study solutions and flows on $S^3$ that are invariant under the following $3$ isometries:
\begin{align*}
& \text{the reflection} \ (x_1,x_2,x_3,x_4) \to (x_2,x_1,x_3,x_4), \\
& \text{the reflection} \ (x_1,x_2,x_3,x_4) \to (x_1,x_2,x_4,x_3), \\
& \text{the evolution} \ (x_1,x_2,x_3,x_4) \to (x_3,x_4,x_1,x_2).
\end{align*}
We show the existence of a $2-$parameter family of weak mean curvature flows connecting a Clifford torus and equatorial spheres that are invariant under these isometries on $S^3$. In this article, we study solutions and flows on $S^{n+1}$ ($n \geq 2$) under a stronger symmetry assumption, that is invariant under $SO(n)$ (for instance, let $n = 2$, this condition means that the flows are invariant under all rotations acting on $x_3,x_4$ coordinates), and show the existence of a $1-$parameter family of weak mean curvature flows connecting $T_{1,n-1}$ and the equators $S^n$ in $S^{n+1}$ that are $SO(n)$ invariant.

\section{Symmetry and rigidity of solutions to the Allen-Cahn equation} \label{symmetry and rigidity}

\subsection{Symmetric stationary integral varifold}
\

Recall that $T_{1,n-1}$ is the Clifford hypersurface $S^1(\sqrt{\frac{1}{n}}) \times S^{n-1} (\sqrt{\frac{n-1}{n}})$ in $S^{n+1}$.

Cheng-Wei-Zeng \cite[Theorem 1.1]{MR4334413} proved that for a $n-$dimensional compact minimal rotational hypersurface in $S^{n+1}$, if it is neither a equatorial sphere $S^n$ nor a Clifford hypersurface $T_{1,n-1}$, then its area $> 2(1-\frac{1}{\pi}) \text{Area}(T_{1,n-1})$. 

\begin{rmk}
Otsuki \cite{otsuki1970minimal, otsuki1972integral} proved that there are no compact minimal embedded $SO(n)$ invariant hypersurfaces of $S^{n+1}$ other than Clifford hypersurfaces $T_{1,n-1}$ and round geodesic spheres. Li-Yau \cite{li1982new} proved that for the non-embedded case, the hypersurface has area of at least $2 \text{Area}(S^n)$. Combining these two results, we prove the Solomon-Yau conjecture for minimal $SO(n)$ invariant hypersurface. The above result gives us a stronger area bound (by using Lemma \ref{monotonicity and bound of ratio} below) since
\begin{align*}
2(1-\frac{1}{\pi}) \text{Area}(T_{1,n-1}) > 2(1-\frac{1}{\pi}) \cdot \sqrt{\frac{2\pi}{e}} \text{Area}(S^n) > 2 \text{Area}(S^n).
\end{align*}
\end{rmk}

We generalize their result to the stationary integral varifold case using an inductive argument and the dimension reduction argument. To do this, we establish the following monotonicity and boundedness result concerning the density ratio of $T_{1,n-1}$.

\begin{lem} \label{monotonicity and bound of ratio}
$\{\frac{\text{Area}(T_{1,n-1})}{\text{Area}(S^{n})}\}_{n \geq 2}$ is a strictly decreasing sequence, and $\sqrt{\frac{2\pi}{e}} < \frac{\text{Area}(T_{1,n-1})}{\text{Area}(S^n)} \leq \frac{\pi}{2}$ for all $n \geq 2$.
\end{lem}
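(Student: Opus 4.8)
The plan is to reduce the whole statement to a single real-variable monotonicity fact about an explicit ratio of Gamma functions. Write $\omega_k=\text{Area}(S^k(1))=2\pi^{(k+1)/2}/\Gamma(\tfrac{k+1}{2})$, recall $\text{Area}(S^k(r))=r^k\omega_k$, and set $R_n:=\text{Area}(T_{1,n-1})/\text{Area}(S^n)$. Since $T_{1,n-1}=S^1(\sqrt{1/n})\times S^{n-1}(\sqrt{(n-1)/n})$, a direct computation gives
\[
R_n=\frac{2\sqrt\pi}{\sqrt n}\left(1-\frac1n\right)^{\frac{n-1}{2}}\frac{\Gamma\!\big(\tfrac{n+1}{2}\big)}{\Gamma\!\big(\tfrac n2\big)}.
\]
Two endpoint facts then drop out. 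First, $R_2=\pi/2$, since $\text{Area}(T_{1,1})=2\pi^2$ and $\text{Area}(S^2)=4\pi$. Second, using $(1-\tfrac1n)^{(n-1)/2}\to e^{-1/2}$ together with the Stirling asymptotic $\Gamma(z+\tfrac12)/\Gamma(z)=\sqrt z\,(1+O(1/z))$ at $z=n/2$, one gets $\lim_{n\to\infty}R_n=2\sqrt\pi\cdot 2^{-1/2}\cdot e^{-1/2}=\sqrt{2\pi/e}$. Hence, once we show that $(R_n)_{n\ge 2}$ is strictly decreasing, both claimed bounds follow at once: $R_n\le R_2=\pi/2$, while $R_n>\lim_m R_m=\sqrt{2\pi/e}$.

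To prove strict monotonicity I would extend $R_n$ to a function $R(x)$ of a real variable $x>1$ via the same formula and show $g(x):=\log R(x)$ has negative derivative. Differentiating, and using $\tfrac{d}{dx}\log(1-\tfrac1x)=\tfrac{1}{x(x-1)}$, the rational contributions coming from $-\tfrac12\log x$ and from the exponent $\tfrac{x-1}{2}$ cancel exactly, leaving (with $\psi=\Gamma'/\Gamma$ the digamma function)
\[
g'(x)=\frac12\left[\psi\!\Big(\frac{x+1}{2}\Big)-\psi\!\Big(\frac x2\Big)-\log\frac{x}{x-1}\right].
\]
So it suffices to establish the digamma inequality $\psi(\tfrac{x+1}{2})-\psi(\tfrac x2)<\log\tfrac{x}{x-1}$ for all $x>1$. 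For this I would start from Gauss's integral formula for $\psi$; after the substitution $t=s^2$ it collapses to the clean identity $\psi(y+\tfrac12)-\psi(y)=2\int_0^1 \frac{s^{2y-1}}{1+s}\,ds$, so with $y=x/2$ we get $\psi(\tfrac{x+1}{2})-\psi(\tfrac x2)=2\int_0^1 \frac{s^{x-1}}{1+s}\,ds$. The arithmetic--geometric mean inequality $1+s\ge 2\sqrt s$ (equality only at $s=1$) then yields
\[
2\int_0^1\frac{s^{x-1}}{1+s}\,ds<\int_0^1 s^{\,x-3/2}\,ds=\frac{1}{x-\tfrac12},
\]
and it remains only to check the elementary bound $\tfrac{1}{x-1/2}<\log\tfrac{x}{x-1}$ for $x>1$. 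This follows since $h(x):=\log\tfrac{x}{x-1}-\tfrac{1}{x-1/2}$ tends to $0$ as $x\to\infty$ while $h'(x)=\tfrac{1}{(x-1/2)^2}-\tfrac{1}{x(x-1)}<0$ (because $(x-\tfrac12)^2=x(x-1)+\tfrac14$), so $h>0$ on $(1,\infty)$. Chaining the last two displays gives $g'(x)<0$, hence $(R_n)$ is strictly decreasing.

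The part demanding genuine care is precisely this last pair of inequalities. The difference $\psi(\tfrac{x+1}{2})-\psi(\tfrac x2)$ falls below $\log\tfrac{x}{x-1}$ by only $O(x^{-2})$, so blunt estimates — bounding $\tfrac{1}{1+s}$ by $1$, or invoking log-convexity of $\Gamma$ alone to control $\Gamma(\tfrac{n+1}{2})/\Gamma(\tfrac n2)$ — are not sharp enough; one must use the AM--GM step in its tight form (equality at $s=1$) and pair it with the refined comparison $\tfrac{1}{x-1/2}<\log\tfrac{x}{x-1}$ rather than the weaker $\tfrac{1}{x-1}$. I would also note that the discrete approach (estimating $R_{n+1}/R_n$ directly from the recursion $\tfrac{\Gamma((n+1)/2)}{\Gamma(n/2)}\cdot\tfrac{\Gamma((n+2)/2)}{\Gamma((n+1)/2)}=\tfrac n2$) runs into exactly the same sharpness problem, which is why passing to a real variable and differentiating is the cleaner route. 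Everything else is routine manipulation of the explicit formula for $R_n$ together with standard $\Gamma$-asymptotics.
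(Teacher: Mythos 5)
Your proof is correct, and it follows a genuinely different route from the paper. You write $R_n$ in closed form via Gamma functions, interpolate to a real variable, and reduce strict monotonicity to the single sharp inequality $\psi(\tfrac{x+1}{2})-\psi(\tfrac{x}{2})<\log\tfrac{x}{x-1}$, which you settle by the integral representation $\psi(y+\tfrac12)-\psi(y)=2\int_0^1\tfrac{s^{2y-1}}{1+s}\,ds$, AM--GM, and the elementary comparison $\tfrac{1}{x-1/2}<\log\tfrac{x}{x-1}$; the limit $\sqrt{2\pi/e}$ then comes from Stirling-type asymptotics of the Gamma ratio, and both bounds follow from monotonicity together with $R_2=\pi/2$. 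The paper instead stays entirely within elementary functions: it uses the recursion $\mathrm{Area}(S^{n+2})/\mathrm{Area}(S^n)=2\pi/(n+1)$ to get $d(n+2)/d(n)=h(n)<1$ (handling the even and odd subsequences and the bound $\pi/2$ via $d(2),d(3)$), proves full monotonicity indirectly by showing the consecutive ratios $d(n+1)/d(n)$ increase in steps of two (the function $m$) and tend to $1$ by Stirling, hence are $<1$, and computes the limit through $\lim_n d(n)d(n+1)=2\pi/e$. Your argument buys a cleaner and in fact stronger statement (monotonicity of the real-variable interpolation, no parity splitting, no indirect ``ratio of ratios'' step), at the cost of invoking digamma identities; the paper's argument avoids special functions altogether and its auxiliary computations are of the same elementary type reused elsewhere in the appendices. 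One cosmetic point: the deficit $\log\tfrac{x}{x-1}-\bigl(\psi(\tfrac{x+1}{2})-\psi(\tfrac{x}{2})\bigr)$ is of order $x^{-3}$, not $x^{-2}$ as you remark, which only reinforces your point that the two estimates must be used in their sharp form; it does not affect the validity of the proof.
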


The proof is computational, for detailed proof, see Appendix \ref{proof of density2}. 

\begin{lem}\label{minimals4}
Let $T$ be a $n-$dimensional stationary integral varifold in $S^{n+1}$ which is invariant under $SO(n)$. If $\text{Area}(T) \leq \text{Area}(T_{1,n-1})$ and its associated $\Z_2$ chain $[T]$ has $\partial [T] = 0$, then $T$ is a multiplicity one equator or a Clifford hypersurface $T_{1,n-1}$.
\end{lem}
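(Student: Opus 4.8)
The plan is to argue by induction on $n \geq 2$, using the $SO(n)$-invariance to reduce the classification to a one-dimensional problem on the orbit space, together with a dimension-reduction argument at the singular set. First I would recall that an $SO(n)$-invariant stationary integral varifold $T$ in $S^{n+1}$ can be described, away from the fixed-point set of the $SO(n)$-action (two antipodal copies of $S^1$, i.e. a totally geodesic $S^1 \subset S^{n+1}$), as a rotation of a ``profile'' rectifiable set in a half-plane-type orbit space, and that stationarity of $T$ translates into the condition that the profile is a stationary (generalized, possibly with multiplicity and branching) geodesic for the weighted length functional whose weight is the volume of the $SO(n)$-orbit. The base case $n=2$ should follow from (or be a mild extension of) the $S^3$ classification used in the previous paper \cite{Chen2021mean}, combined with the area bound $\mathrm{Area}(T) \leq \mathrm{Area}(T_{1,1})$; here I would invoke the boundary condition $\partial[T]=0$ to rule out varifolds with a ``free boundary'' on the fixed axis and to control parity.

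The core of the argument is the inductive step. I would take an $SO(n)$-invariant stationary integral varifold $T$ with $\mathrm{Area}(T) \leq \mathrm{Area}(T_{1,n-1})$ and $\partial[T]=0$, and analyze its support. Where $\mathrm{supp}\,\|T\|$ meets the $SO(n)$-fixed circle, I would blow up: a tangent cone at such a point is an $SO(n)$-invariant stationary integral cone in $\R^{n+1}$, which (again by the rotational structure, now linear) is a union of half-hyperplanes or a cone over an $SO(n-1)$-invariant minimal submanifold in a sphere of one lower dimension, so the inductive hypothesis (plus the density ratio bounds from Lemma \ref{monotonicity and bound of ratio}, which force the density to be small enough to be $1$) applies and shows the tangent cone is a multiplicity-one hyperplane; hence $\mathrm{supp}\,\|T\|$ is regular there and either passes through the axis as a smooth equator-like sheet or stays away from it. Away from the fixed set and away from the regular part, the singular set has Hausdorff dimension $\leq n-7$ by the usual regularity theory for stationary integral varifolds of bounded density; by dimension reduction and the inductive hypothesis on the link, the singularities are in fact absent in the relevant codimension, so $\mathrm{supp}\,\|T\|$ is a smooth closed embedded minimal hypersurface (using $\partial[T]=0$ and unit density, inherited from the density bound $\mathrm{Area}(T) \leq \mathrm{Area}(T_{1,n-1}) \leq \tfrac{\pi}{2}\mathrm{Area}(S^n) < 2\,\mathrm{Area}(S^n)$, which rules out multiplicity and immersed-but-not-embedded behavior via the monotonicity formula). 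At that point Otsuki's classification \cite{otsuki1970minimal, otsuki1972integral} (as recalled in the Remark) — or directly Cheng-Wei-Zeng \cite{MR4334413} in the smooth case — identifies $T$ as a multiplicity-one equator or $T_{1,n-1}$.

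I expect the main obstacle to be the analysis at the $SO(n)$-fixed circle: one must rule out the possibility that $\mathrm{supp}\,\|T\|$ accumulates on the axis in a complicated way, or that a tangent cone there is a nontrivial $SO(n)$-invariant minimal cone of low density rather than a hyperplane. This is exactly where the quantitative density gap of Lemma \ref{monotonicity and bound of ratio} is essential: the hypothesis $\mathrm{Area}(T) \leq \mathrm{Area}(T_{1,n-1})$ together with $\mathrm{Area}(T_{1,n-1}) \leq \tfrac{\pi}{2}\mathrm{Area}(S^n)$ keeps the density everywhere strictly below $2$, forcing unit density and embeddedness, and the strict monotonicity of the ratio sequence is what makes the inductive comparison of link-areas close up. The parity/boundary hypothesis $\partial[T]=0$ is used to exclude the ``half'' objects (e.g. a spherical cap meeting the axis orthogonally) that would otherwise be stationary as varifolds; I would phrase this via the associated $\Z_2$ flat chain and Constancy-type arguments on the complement of $\mathrm{supp}\,\|T\|$.
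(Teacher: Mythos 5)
Your inductive framework (reduce to the smooth case, blow up at bad points, feed the link of the tangent cone to the induction hypothesis, use the density bound from Lemma \ref{monotonicity and bound of ratio}) is in the right spirit, but the step that is supposed to give regularity away from the $SO(n)$-fixed set rests on a theorem that does not exist: general stationary integral varifolds do \emph{not} have singular sets of Hausdorff dimension $\leq n-7$. That bound is available for area minimizers and for stable hypersurfaces (Wickramasekera), not for merely stationary varifolds of bounded density, so your assertion ``the singular set has Hausdorff dimension $\leq n-7$ by the usual regularity theory'' and the ensuing dimension-reduction step collapse. The paper handles exactly this point by a different mechanism: it forms the cone $C = 0 \# T$ and, at a hypothetical singular point $x_0 \neq 0$, analyzes tangent cones. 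Either some tangent cone splits as $\overline{C}^1 \times \R^{n+1}$ with $\overline{C}^1$ a one-dimensional stationary cone (this is then killed as below), or, by White's stratification theorem (the set of points admitting no such split tangent cone has dimension $\leq n$) together with the cone structure and the $SO(n)$-invariance, $x_0$ must lie on the fixed-point set; there the tangent cone splits off the radial line and its link is a lower-dimensional invariant varifold of area strictly below the corresponding Clifford area, to which the induction hypothesis applies, and Allard's theorem gives the contradiction. Only after smoothness is established does Cheng--Wei--Zeng (or Otsuki plus Li--Yau) finish the classification. Your off-axis analysis could in principle be repaired by noting that a tangent cone at a non-fixed point of an $SO(n)$-invariant varifold splits off the $(n-1)$-dimensional orbit directions and so reduces to a one-dimensional cone, but you do not make that argument; you appeal to an unavailable regularity statement instead.

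Second, your use of the hypothesis $\partial[T]=0$ is misdirected. Its essential role is to exclude the triple junction: a one-dimensional stationary cone is a union of $k \geq 2$ half-lines with density $k/2$, and since by Lemma \ref{monotonicity and bound of ratio} the density bound only gives $\Theta \leq \pi/2$ while $\mathrm{Area}(T_{1,n-1})/\mathrm{Area}(S^n) > \sqrt{2\pi/e} > 3/2$, the case $k=3$ cannot be ruled out by the area hypothesis alone; it is eliminated because blow-ups of the $\Z_2$ cycle $[T]$ are again cycles, forcing $k$ even, hence $k=2$, so the tangent cone is a multiplicity-one plane and Allard's theorem contradicts singularity. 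Your proposal invokes $\partial[T]=0$ only to rule out ``free boundaries'' or caps on the axis and claims that density $<2$ already forces unit density and embeddedness; but density $<2$ does not exclude junctions of density $3/2$, so without the parity argument the classification does not close. This is precisely the point of the remark following Lemma \ref{minimals4} in the paper.
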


\begin{proof}

We use an inductive argument. The case for $n = 2$ has been proved by Choi-Mantoulidis \cite[Lemma 5.8]{choi2022ancient} based on Marques–Neves's \cite{marques2014min} resolution of the Willmore conjecture and the dimension reduction argument. It is worth mentioning that in this case, we don't need the $SO(2)$ symmetry condition.

Now we prove the lemma for a $(n+1)-$dimensional stationary varifold $T$ in $S^{n+2}$ satisfying all conditions specified in the lemma, assuming that the lemma holds for the $n$-dimensional case.

If $T$ is smooth, the result follows by Cheng-Wei-Zeng \cite{MR4334413}.

We argue by contradiction to show that $T$ is smooth.

If $T$ were singular, then the $(n+2)-$dimensional stationary cone $C^{n+2}: = 0 \# T$ would be one where the origin is not an isolated singularity. If $x_0 \neq 0$ denotes a singular point of $C^{n+2}$, then by the monotonicity formula, the densities of $C^{n+2}$ at $x_0$ and the origin $0$ satisfy:
\begin{align*}
\Theta^{n+2}(C^{n+2},x_0) \leq \Theta^{n+2}(C^{n+2},0) = \frac{\text{Area}(T)}{\text{Area}(S^{n+1})} \leq \frac{\text{Area}(T_{1,n})}{\text{Area}(S^{n+1})}.
\end{align*}

By Lemma \ref{monotonicity and bound of ratio}, $\Theta^{n+2}(C^{n+2},x_0) < \frac{\text{Area}(T_{1,n-1})}{\text{Area}(S^{n})} \leq \frac{\pi}{2} < 2$.

Let $\overline{C}^{n+2}$ be a tangent cone to $C^{n+2}$ at $x_0$. If there exists a stationary $1-$dimensional cone $\overline{C}^1 \subset \R^2$ such that $\overline{C}^{n+2} \cong \overline{C}^1 \times \R^{n+1}$, then
\begin{align}\label{density}
\Theta(\overline{C}^1,0) = \Theta^{n+2}(C^{n+2},x_0) <2.
\end{align}

It is well known that all $1-$dimensional stationary cones are unions of $k \geq 2$ half-rays and have $\Theta^1(\overline{C}^1, 0 ) = \frac{k}{2}$. We have $k \leq 3$ by (\ref{density}). Moreover, $k$ is even because $\overline{C}^1$ is obtained by blow up of the $\Z_2$ cycle $T$. Therefore $k = 2$. This means $\overline{C}^1 \cong \R$ with multiplicity one and $\overline{C}^{n+2} \cong \R^{n+2}$ with multiplicity one. This violates the singular nature of $x_0 \in C^{n+2}$ by Allard's theorem \cite{simon1983lectures}.

Therefore $\overline{C}^{n+2}$ does not have the form $\overline{C}^1 \times \R^{n+1}$. By White \cite{white1997}, we know the set of points in $C^{n+2}$ at which no tangent cone has the form $\overline{C}^1 \times \R^{n+1}$ has Hausdorff dimension at most $n$. Since $T$ is $SO(n+1)$ invariant, $C^{n+2}$ is also $SO(n+1)$ invariant, by this symmetry property and the dimension comparison, we conclude that $x_0$ is the fixed point under $SO(n+1)$.

Now the tangent cone $\overline{C}^{n+2} \cong \overline{C}^{n+1} \times \R$ for some stationary $(n+1)-$dimensional cone $\overline{C}^{n+1} \subset \R^{n+2}$. Since $C^{n+2}$ and $x_0$ are $SO(n+1)$ invariant, then the tangent cone $\overline{C}^{n+2}$ is also $SO(n+1)$ invariant, thus $\overline{C}^{n+1}$ is $SO(n)$ invariant. Let $T^n \subset S^{n+1}$ be the link of $\overline{C}^{n+1}$, it is $SO(n)$ invariant, and
\begin{align*}
\text{Area}(T^n) = \text{Area}(S^n) \Theta^{n+1}(\overline{C}^{n+1}, 0) = \text{Area}(S^n) \Theta^{n+2}(C^{n+2}, x_0) < \text{Area}(T_{1,n-1}).
\end{align*}

Then by the $n-$dimensional case, we know that $T^n \simeq S^n$ with multiplicity one, so $\overline{C}^{n+1} \simeq \R^{n+1}$ with multiplicity one, and $\overline{C}^{n+2} \simeq \R^{n+2}$ with multiplicity one. This violates the singular nature of $x_0 \in C^{n+2}$ by Allard's theorem \cite{simon1983lectures}.

\end{proof}

\begin{rmk}
By Lemma \ref{monotonicity and bound of ratio}, we know that $\frac{\text{Area}(T_{1,n-1})}{\text{Area}(S^n)} > \sqrt{\frac{2\pi}{e}} > \frac{3}{2}$ for all $n \geq 2$, thus we could not rule out the possibility of $k = 3$ by improving the density estimate \eqref{density}. That is why we need to have the additional condition regarding the associated $\Z_2$ chain.
\end{rmk}

\subsection{Symmetry of the solution}
\

Caju-Gaspar \cite[Theorem $1.1$]{caju2019solutions} proved the existence of a solution $u_{\e}$ of \eqref{AC} whose nodal set converges to a minimal, separating hypersurface $\Gamma$, for sufficiently small $\e$, under the assumption that all Jacobi fields are generated by global isometries. They also proved that the Morse index and the nullity of $u_{\e}$ equal the Morse index and the nullity of $\Gamma$.

We here give a brief explanation of the index and the nullity estimate.  We can get a Jacobi field for the Allen-Cahn approximation of $\Gamma$ by pairing a Killing field with the gradient of this solution. These Jacobi fields for $u_{\e}$ produced by the Killing fields are linearly independent for sufficiently small $\e$, which implies that the nullity of $\Gamma \leq$ the nullity of the Allen-Cahn approximation of $\Gamma$. Then by the lower semicontinuity from \cite{gaspar2020second} and the upper semicontinuity in the multiplicity one case from \cite{chodosh2020minimal}, we claim that the Morse index and the nullity of $\Gamma$ and the Allen-Cahn approximation of $\Gamma$ are equal.

By Hsiang-Lawson \cite{hsiang1971minimal}, for the minimal hypersurface $T_{p,q}$ in $S^{n+1}$ ($n = p+q$), all Jacobi fields of $T_{p,q}$ are Killing-Jacobi fields. The nullity estimate above implies that the Jacobi fields for the Allen-Cahn approximation of $T_{p,q}$ produced by the Killing fields make up all the nullity of this solution for all sufficiently small $\e$.

Now we show the symmetry property of the solution $u_{\e}$. We include the proof from Hiesmayr \cite{hiesmayr2020rigidity} for completeness.

By using a change of variables argument as in \cite{hiesmayr2020rigidity}, we see that for any isometry $P$ of $S^{n+1}$ and any $C^1$ function $u$ defined on $S^{n+1}$, the pushforward $P_{\#}$ by the isometry $P$ satisfies
\begin{align*}
P_{\#}V_{\e,u} = V_{\e, u \circ P^{-1}}.
\end{align*}

Here we mention some common notations. We write $|\Sigma|$ for
the unit density varifold associated to $\Sigma$. Given a surface $\Sigma \subset S^{n+1}$, and a function $u$ on $S^{n+1}$, denote their stabilizer group by $\text{Stab} \Sigma$, $\text{Stab} u$, respectively. In other words, $\text{Stab} \Sigma = \{P \in SO(n+2) | P(\Sigma) = \Sigma\}, \text{Stab} u = \{P \in SO(n+2) | u \circ P = u\}$. These stabilizer groups are closed Lie subgroups of $SO(n+2)$, and we can therefore discuss the dimensions.

\begin{prop}[\cite{hiesmayr2020rigidity}] \label{symmetry}

For sufficiently small $\e$, the solution $u_{\e}$ of \eqref{AC} described above (which is the Allen-Cahn approximation of $T_{p,q}$) is $SO(p+1) \times SO(q+1)$ invariant, up to conjugation.

\end{prop}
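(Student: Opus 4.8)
The plan is to show that the stabilizer group $G := \operatorname{Stab} u_\e$ contains a conjugate of $SO(p+1) \times SO(q+1)$. The starting point is the rigidity/symmetry machinery: because all Jacobi fields of $T_{p,q}$ are Killing–Jacobi fields (Hsiang–Lawson) and Caju–Gaspar give that the nullity of the Allen–Cahn approximation $u_\e$ equals the nullity of $T_{p,q}$, every element of the kernel of $\mathcal{L}_{\e, u_\e}$ is of the form $\langle X, \nabla u_\e \rangle$ for a Killing field $X$ on $S^{n+1}$. The key point is that an isometry $P$ of $S^{n+1}$ fixes $u_\e$ if and only if it fixes the induced varifold $V_{\e, u_\e}$ (using $P_\# V_{\e,u} = V_{\e, u\circ P^{-1}}$ together with the fact that distinct low-energy Allen–Cahn solutions with the same interface are distinguished by their varifolds, or more directly by a continuity/uniqueness argument: $u_\e$ is the unique such solution near $T_{p,q}$, so an isometry close to one fixing $T_{p,q}$ must fix $u_\e$). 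Therefore it suffices to understand which isometries can fix $u_\e$, and this is controlled infinitesimally by the Killing fields that produce Jacobi fields annihilated in the relevant sense.

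Next I would run the dimension-counting argument of Hiesmayr. The Lie algebra of $\operatorname{Stab} u_\e$ consists exactly of those Killing fields $X$ with $\langle X, \nabla u_\e\rangle \equiv 0$, i.e. Killing fields tangent to all level sets of $u_\e$. On the other hand, the Killing fields tangent to $T_{p,q}$ form precisely $\mathfrak{so}(p+1) \oplus \mathfrak{so}(q+1)$ (the Lie algebra of $\operatorname{Stab} T_{p,q}$), and the nullity of $T_{p,q}$ is $(p+1)(q+1)$, which equals $\dim \mathfrak{so}(n+2) - \dim(\mathfrak{so}(p+1)\oplus \mathfrak{so}(q+1))$; this is exactly the statement that \emph{all} Jacobi fields come from Killing fields and that the Killing fields normal to $T_{p,q}$ account for all the nullity. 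Transporting this to $u_\e$: the nullity equality forces $\dim \operatorname{Stab} u_\e \geq \dim \mathfrak{so}(n+2) - \text{nul}_\e(u_\e) = \dim(\mathfrak{so}(p+1) \oplus \mathfrak{so}(q+1))$, because the $(p+1)(q+1)$-dimensional space of nullity directions is spanned by Killing fields not tangent to the level sets, leaving a subalgebra of dimension at least $\dim \mathfrak{so}(p+1) + \dim \mathfrak{so}(q+1)$ of Killing fields that \emph{are} tangent to every level set, hence integrate to isometries fixing $u_\e$.

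To upgrade the dimension bound to an honest containment of $SO(p+1)\times SO(q+1)$ up to conjugation, I would argue that this subalgebra $\mathfrak{g} \subseteq \mathfrak{so}(n+2)$ must in fact be conjugate to $\mathfrak{so}(p+1)\oplus\mathfrak{so}(q+1)$. Here one uses that $u_\e$ is $C^1$-close (after rescaling) to the profile built on $T_{p,q}$, so $\operatorname{Stab} u_\e$ is close to a subgroup of $\operatorname{Stab} T_{p,q} = S(O(p+1)\times O(q+1))$; a closed subgroup of $SO(n+2)$ of dimension $\geq \dim(\mathfrak{so}(p+1)\oplus\mathfrak{so}(q+1))$ that is near $S(O(p+1)\times O(q+1))$ must contain a conjugate of the identity component $SO(p+1)\times SO(q+1)$. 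Combined with the previous step this yields that $u_\e$ is $SO(p+1)\times SO(q+1)$ invariant up to conjugation for all sufficiently small $\e$.

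The main obstacle I expect is the passage from the infinitesimal statement (a subalgebra of Killing fields annihilating $\langle\cdot,\nabla u_\e\rangle$ of the right dimension) to the global group statement, and in particular making rigorous the claim that an isometry fixes $u_\e$ exactly when the associated Killing field is tangent to the level sets — one must be careful that $\operatorname{Stab} u_\e$ could a priori be disconnected or fail to be generated by its identity component, and that the "closeness to $T_{p,q}$" is used correctly to pin down the conjugacy class rather than merely the dimension. This is precisely where Hiesmayr's change-of-variables identity $P_\# V_{\e,u} = V_{\e, u\circ P^{-1}}$ and the uniqueness of the low-energy solution with interface $T_{p,q}$ do the essential work, so I would follow that argument closely.
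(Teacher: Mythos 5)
Your proposal is essentially the paper's own argument: the nullity equality (Hsiang--Lawson plus the Caju--Gaspar index/nullity estimates) gives $\dim \operatorname{Stab} u_\e \geq \dim\bigl(\mathfrak{so}(p+1)\oplus\mathfrak{so}(q+1)\bigr)$ by rank--nullity, the pushforward identity $P_{\#}V_{\e,u}=V_{\e,u\circ P^{-1}}$ together with $V_{\e,u_\e}\to |T_{p,q}|$ places $\operatorname{Stab} u_\e$ in a small tubular neighborhood of $\operatorname{Stab} T_{p,q}$, and the Montgomery--Zippin-type conjugation theorem plus the dimension count then force containment of a conjugate of $SO(p+1)\times SO(q+1)$ --- exactly the ingredients of the paper's (contradiction-framed, sequence-extraction) proof. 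The only caveats are inessential: the ``if'' direction of your claimed equivalence (fixing $V_{\e,u_\e}$ implies fixing $u_\e$) and the appeal to uniqueness of the Allen--Cahn approximation are neither justified nor needed, and the uniform statement that the whole group $\operatorname{Stab} u_\e$ lies near $\operatorname{Stab} T_{p,q}$ for all small $\e$ is precisely what the paper's extraction argument supplies in place of your ``$C^1$-closeness'' assertion.
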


\begin{proof}

It's clear that the Clifford hypersurface $T_{p,q}$ is invariant under $SO(p+1) \times SO(q+1)$, denoting its stabilizer group by $G$.

We argue by contradiction. If the Proposition does not hold, then there exists a sequence $\{\e_j\}$ such that $\e_j \downarrow 0$, the nodal sets of the solutions $u_j = u_{\e_j}$ of \eqref{AC} converge to $T_{p,q}$ as $j \to \infty$, and the stabilizer group of $u_j$ is not conjugated to $G$. 

Consider a sequence $\{P_j\}$ with $P_j \in \text{Stab} u_j$. Upon extracting a subsequence we may assume that it converges to some $P \in SO(n+2)$. Since $V_{\e_j, u_j} \to |T_{p,q}|$, we know that $P_{j\#} V_{\e_j,u_j} \to P_{\#} |T_{p,q}|$, while $P_{j\#} V_{\e_j,u_j} = V_{\e_j, u_j \circ P_j^{-1}} = V_{\e_j, u_j} \to |T_{p,q}|$. Hence $P_{\#}|T_{p,q}| = |T_{p,q}|$, and $P \in G$.

By a extraction argument, we find that given any $\tau > 0$, there is $J(\tau) \in \mathbb{N}$ so that $\text{Stab} u_j \subset (G)_{\tau}$ when $j \geq J(\tau)$. Here the Lie group $SO(n+2)$ is endowed with a bi-invariant metric, and $(G)_{\tau}$ is the open tubular neighborhood of $G$ of size $\tau >0$. By \cite{MR6545}, $\text{Stab} u_j$ is conjugate to a subgroup of $G$. By a nullity estimate argument above, we know that $\dim \text{Stab} u_j = \dim \text{Stab} T_{p,q}$, thus $\text{Stab} u_j$ is conjugate to $G$, we get a contradiction.

\end{proof}

\begin{rmk}

For equatorial sphere $S^n \subset S^{n+1}$, it also satisfies the assumption that all Jacobi fields are Killing-Jacobi fields (see \cite{simons1968minimal}). By the same argument, we know that the Allen-Cahn approximation of the equatorial sphere is $SO(n+1)$ invariant, for sufficiently small $\e$.

\end{rmk}

Recall from \cite{caju2020ground}, ground state solutions are unstable solutions of least energy. In the Allen-Cahn setting, the ground state solution on $S^{n+1}$ is unique up to rigid motions, and its nodal set is exactly along an equator $S^n$ for sufficiently small $\e$. 

One may combine \cite{hardt1989nodal} with \cite{brezis1986remarks} to obtain the following general result about the nodal sets: Let $(M,g)$ be closed, let $\e > 0$ and $u_{\e}^1, u_{\e}^2$ be two solutions of \eqref{AC} on $M$. If the nodal sets of $u_{\e}^1, u_{\e}^2$ are same, then $u_{\e}^1 = \pm u_{\e}^2$.

\begin{coro} \label{rigidity of ground state}
For sufficiently small $\e$, if $u_{\e}$ is a Allen-Cahn approximation of the equatorial sphere $S = S^n \subset S^{n+1}$, then $u_{\e}$ is a ground state solution.
\end{coro}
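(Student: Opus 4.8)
The plan is to show that, for all small $\e$, the nodal set of $u_\e$ is \emph{exactly} a totally geodesic equator, and then to identify $u_\e$ with a ground state using the nodal set uniqueness result quoted just above the corollary. \emph{Step 1 (symmetry reduction).} Since $u_\e$ is an Allen--Cahn approximation of $S^n$ — which, as usual, we take to be a multiplicity-one concentration — the remark following Proposition~\ref{symmetry} applies: after conjugating by an ambient isometry we may assume $u_\e$ is invariant under the subgroup $SO(n+1)\subset SO(n+2)$ fixing a chosen pair of antipodal points $\pm p_0$. Thus $u_\e=f_\e(\theta)$ with $\theta=\dist(\cdot,p_0)\in[0,\pi]$, where $f_\e$ solves $\e^2\big(f_\e''+n\cot\theta\,f_\e'\big)=W'(f_\e)$ on $(0,\pi)$ with Neumann data at the poles, and the nodal set $Z_\e=\{u_\e=0\}$ is a union of latitude spheres. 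Because the concentration is multiplicity one and $\ind_\e(u_\e)$ is finite (indeed $\ind_\e(u_\e)=1$, the Morse index of the equator, by \cite{caju2019solutions}), the fine structure of such solutions near their limit interface — where $u_\e$ is modelled on $\pm\mathbb{H}_\e$ of the signed distance to $S^n$ — forces $Z_\e$, for small $\e$, to be a single latitude sphere $\{\theta=\theta_\e\}$ across which $f_\e$ changes sign, with $\theta_\e\to\pi/2$.

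\emph{Step 2 (the nodal latitude is the equator).} The point is to improve this to $\theta_\e=\pi/2$. The reflection of $S^{n+1}$ across the equator $\{\theta=\pi/2\}$ is an isometry fixing $S^n$; precomposing $u_\e$ with it and negating (legitimate since $W'$ is odd) produces another $SO(n+1)$-invariant single-transition solution of \eqref{AC} with limit interface $S^n$ and the \emph{same} sign pattern as $f_\e$, but with transition at $\pi-\theta_\e$. Hence it suffices to know that, for small $\e$, the $SO(n+1)$-invariant single-transition solution with a prescribed sign is unique; this gives $f_\e(\theta)=-f_\e(\pi-\theta)$, so $f_\e(\pi/2)=0$ and $\theta_\e=\pi-\theta_\e=\pi/2$. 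I would prove this uniqueness by a Lyapunov--Schmidt reduction: such solutions correspond to critical points of a reduced energy $\Psi_\e$ on the set of admissible transition heights, and $\Psi_\e$ is a $C^1$-small perturbation — uniformly on compact subsets of $(0,\pi)$ — of $c\mapsto 2\sigma\,\text{Area}(S^n)\sin^n c$, whose only critical point is a nondegenerate maximum at $c=\pi/2$; so for small $\e$ the function $\Psi_\e$ has a unique critical point near $\pi/2$, and since the problem is invariant under $\theta\mapsto\pi-\theta$ we have $\Psi_\e(c)=\Psi_\e(\pi-c)$, forcing that critical point to be $\pi/2$. (Alternatively one may simply quote the uniqueness, up to ambient isometry and sign, of the Allen--Cahn approximation of the nondegenerate-modulo-isometries hypersurface $S^n$, implicit in the Caju--Gaspar framework \cite{caju2019solutions}; this already gives that $u_\e$ agrees with a ground state up to sign and an ambient isometry, so that Step 3 becomes immediate.)

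\emph{Step 3 (identification with the ground state).} Let $v_\e$ be the ground state; by \cite{caju2020ground} its nodal set is exactly an equator and ground states are unique up to rigid motions. Since $SO(n+2)$ acts transitively on the equators of $S^{n+1}$, after replacing $u_\e$ by $u_\e\circ P^{-1}$ for a suitable isometry $P$ we may assume $Z_\e$ coincides with the nodal set of $v_\e$. By the nodal set uniqueness result quoted above (combining \cite{hardt1989nodal} with \cite{brezis1986remarks}), $u_\e=\pm v_\e$; and $-v_\e$ is again a ground state, because $E_\e(-v_\e)=E_\e(v_\e)$ and, $W''$ being even, $\mathcal{L}_{\e,-v_\e}=\mathcal{L}_{\e,v_\e}$ has the same Morse index. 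Undoing the isometry, $u_\e$ is a ground state.

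\emph{Main obstacle.} All the content sits in Step 2 — turning the convergence $\theta_\e\to\pi/2$ into the exact equality $\theta_\e=\pi/2$, i.e.\ showing the nodal set is totally geodesic rather than merely close to it; Steps 1 and 3 are bookkeeping with the symmetry result and the quoted facts about ground states and nodal sets. The technical crux within Step 2 is the $C^1$-closeness of the reduced energy $\Psi_\e$ to $c\mapsto\sin^n c$, which is what excludes spurious critical points near $\pi/2$; if one is willing to invoke the isometry-uniqueness of Allen--Cahn approximations of $S^n$ directly, the corollary follows at once.
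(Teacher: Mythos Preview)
Your argument is correct, but Step~2 is substantially more involved than the paper's. The paper's proof is three lines: after noting that $u_\e$ is $SO(n+1)$-invariant (your Step~1), it simply cites \cite[Lemma~7.2]{caju2020ground} --- a rotation-and-maximum-principle (moving-planes type) argument --- which directly shows that the nodal set of any $SO(n+1)$-invariant solution is the equator fixed by its stabilizer, with no need to first establish that $Z_\e$ is a single latitude or to run a Lyapunov--Schmidt reduction; Step~3 then concludes exactly as you do. Your route trades this black-box citation for a more hands-on construction, which is instructive but requires extra care: the reflection-plus-negation step implicitly assumes $W$ is even (not among the paper's standing hypotheses), and the $C^1$-closeness of $\Psi_\e$ to $c\mapsto\sin^n c$ is asserted rather than proved. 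Your parenthetical alternative --- quoting uniqueness of the Allen--Cahn approximation of $S^n$ from \cite{caju2019solutions} --- is closer in spirit to what the paper does, though the paper appeals to a different lemma in the \emph{other} Caju--Gaspar paper \cite{caju2020ground}, one that works by moving planes rather than by inverse-function-theorem uniqueness. Either route lands the corollary; the paper's is the shorter one, and it applies to any $SO(n+1)$-invariant solution, not just those already known to approximate $S^n$.
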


\begin{proof}

We know that $u_{\e}$ is $SO(n+1)$ invariant. By \cite[Lemma 7.2]{caju2020ground}, which used a rotation argument and the maximum principle, we know that the nodal set of $u_{\e}$ is exactly the equator which is invariant under the stabilizer group of $u_{\e}$. We claim that $u_{\e}$ is a ground state solution.

\end{proof}

\subsection{Solutions on \texorpdfstring{$S^{n+1}$}{S4}}
\

From now on we focus on the Clifford hypersurface $T_{1,n-1}$, that is, up to isometry, the minimal hypersurface
\begin{equation} \label{T4}
T_c = \{(x_1,x_2,\cdots, x_{n+2}) \in \R^{n+2}: x_1^2 + x_2^2 = \frac{1}{n}, x_3^2 + \cdots + x_{n+2}^2 = \frac{n-1}{n}\} \subset S^{n+1}.
\end{equation}

We study the specific Clifford hypersurface $T_c$ (as defined in equation \eqref{T4}) and its Allen-Cahn approximation, as all other Clifford hypersurfaces $T_{1,n-1}$ (and their Allen-Cahn approximation) in $S^{n+1}$ are just different by a rotation. Denote the Allen-Cahn approximation of $T_c$ as $u_{\e}^{-\infty}$, and we know that $E_{\e}(u_{\e}^{-\infty}) \to (2\sigma) \cdot \text{Area}(T_c)$ as $\e \downarrow 0$. 

Let the group $G_1$ denote all rotations acting on the $x_1,x_2$ coordinates, and the group $G_2$ denote all rotations acting on the $x_3,\cdots,x_{n+2}$ coordinates. It is clear that $T_c$ is $G_1 \times G_2$ invariant.

\begin{lem} \label{existence of symmetric solution}
For sufficiently small $\e$, there exists an Allen-Cahn approximation $u_{\e}^{-\infty}$ of $T_c$ that is invariant under $G_1 \times G_2$.
\end{lem}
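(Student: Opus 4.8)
The plan is to start from the solution furnished by Proposition~\ref{symmetry}, correct it by a single ambient rotation so that its stabilizer becomes exactly $G_1\times G_2$, and then verify that this rotation does not move the limit interface off of $T_c$. Note first that $T_c=T_{1,n-1}$, so here $p=1$, $q=n-1$, and $SO(p+1)\times SO(q+1)=SO(2)\times SO(n)=G_1\times G_2=:G$. By Caju-Gaspar \cite{caju2019solutions} --- applicable because, by Hsiang-Lawson \cite{hsiang1971minimal}, every Jacobi field of $T_c$ is a Killing-Jacobi field --- for all sufficiently small $\e$ there is a solution $u_\e$ of \eqref{AC} with limit interface $T_c$, i.e.\ $V_{\e,u_\e}\to |T_c|$ as $\e\downarrow 0$; and by Proposition~\ref{symmetry}, after possibly shrinking $\e$, $\text{Stab}\,u_\e$ is conjugate in $SO(n+2)$ to $G$, say $\text{Stab}\,u_\e=R_\e\,G\,R_\e^{-1}$ with $R_\e\in SO(n+2)$.

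Next I would set $u_\e^{-\infty}:=u_\e\circ R_\e$. This is again a solution of \eqref{AC}, and from the change-of-variables identity $\text{Stab}(u\circ P)=P^{-1}(\text{Stab}\,u)\,P$ one gets $\text{Stab}\,u_\e^{-\infty}=R_\e^{-1}(\text{Stab}\,u_\e)\,R_\e=G$, so $u_\e^{-\infty}$ is $G_1\times G_2$ invariant. What remains is to check that $\{u_\e^{-\infty}\}_\e$ still has limit interface $T_c$. Using the pushforward identity $P_\#V_{\e,u}=V_{\e,\,u\circ P^{-1}}$ recorded in Section~\ref{symmetry and rigidity}, we have $V_{\e,u_\e^{-\infty}}=(R_\e^{-1})_\#V_{\e,u_\e}$. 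Given any sequence $\e_j\downarrow 0$, compactness of $SO(n+2)$ lets us pass to a subsequence along which $R_{\e_j}\to R\in SO(n+2)$; combined with $V_{\e_j,u_{\e_j}}\to |T_c|$, this gives $V_{\e_j,u_{\e_j}^{-\infty}}\to (R^{-1})_\#|T_c|=|R^{-1}(T_c)|$, the unit-density varifold of a Clifford hypersurface of type $T_{1,n-1}$. On the other hand, each $V_{\e,u_\e^{-\infty}}$ is $G$ invariant (since $u_\e^{-\infty}$ is), and hence so is this limit. I would then conclude by the observation that $T_c$ is the \emph{unique} Clifford hypersurface of type $T_{1,n-1}$ in $S^{n+1}$ invariant under $G=G_1\times G_2$: such a hypersurface is determined by an orthogonal splitting $\R^{n+2}=\R^{p'+1}\oplus\R^{q'+1}$ with $\{p',q'\}=\{1,n-1\}$ that is preserved by $G$, and since $G_2\cong SO(n)$ acts irreducibly on $\langle e_3,\dots,e_{n+2}\rangle$ while $G_1\cong SO(2)$ rotates $\langle e_1,e_2\rangle$, the only admissible splitting is $\langle e_1,e_2\rangle\oplus\langle e_3,\dots,e_{n+2}\rangle$, which is exactly $T_c$ as in \eqref{T4}. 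Hence $(R^{-1})_\#|T_c|=|T_c|$; since every subsequence of $\{V_{\e,u_\e^{-\infty}}\}$ has a further subsequence converging to $|T_c|$, the whole family converges to $|T_c|$, so $u_\e^{-\infty}$ is indeed an Allen-Cahn approximation of $T_c$.

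The argument is essentially bookkeeping on top of Proposition~\ref{symmetry}. The one point requiring care is the last --- ruling out that the normalizing rotations $R_\e$, in the limit, carry $T_c$ onto a rotated copy of itself --- which is settled by combining the $G$-invariance of the limiting varifold with the uniqueness of the $G$-invariant Clifford hypersurface $T_{1,n-1}$. I do not expect any genuinely hard step.
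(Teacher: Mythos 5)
Your proposal is correct and follows essentially the same route as the paper: compose the solution from Proposition~\ref{symmetry} with an ambient rotation to make it exactly $G_1\times G_2$ invariant, then identify the limit interface as $T_c$ because it must be a $G_1\times G_2$-invariant Clifford hypersurface of type $T_{1,n-1}$ and $T_c$ is the only one. Your handling of the last step (tracking the normalizing rotations via compactness of $SO(n+2)$ and the pushforward identity) is a slightly more explicit version of the paper's subsequence argument, but it is the same idea.
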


\begin{proof}

Let $u_{\e}$ be the Allen-Cahn approximation of the Clifford hypersurface $T_c$, by Proposition \ref{symmetry}, we know that $u_{\e}$ is $SO(2) \times SO(n)$ invariant, up to conjugation.

Therefore there exists an isometry $P_1 \in SO(n+2)$ such that $u_{\e} \circ P_1$ is invariant under a group $G = SO(2) \times SO(n)$. It is clear that $G$ and $G_1 \times G_2$ are conjugate, so there exists $P_2 \in SO(n+2)$ such that $P_2^{-1}G P_2 = G_1 \times G_2$.

Then $u_{\e}^{-\infty} = u_{\e} \circ P_1 \circ P_2$ is a solution to \eqref{AC} that is invariant under $G_1 \times G_2$. The remaining step is to show that $u_{\e}^{-\infty}$ is an Allen-Cahn approximation of $T_c$.

We know that $E_{\e}(u_{\e}^{-\infty}) = E_{\e}(u_{\e}) \to 2\sigma \cdot \text{Area}(T_c)$ as $\e \downarrow 0$, so the energy of $u_{\e}^{-\infty}$ is bounded for small $\e$. Upon extracting a subsequence, the limit interface of $u_{\e}^{-\infty}$ is a Clifford hypersurface $T_{1,n-1}$. Since the limit interface of $u_{\e}^{-\infty}$ needs to be invariant under $G_1 \times G_2$, and the only $G_1 \times G_2$ invariant Clifford hypersurface is $T_c$, we claim that the limit interface of $u_{\e}^{-\infty}$ is the Clifford hypersurface $T_c$.

\end{proof}

Next, we show that the only $SO(n)$ invariant solutions of \eqref{AC} with energy level under $u_{\e}^{-\infty}$ are ground states.

\begin{lem} \label{equatorial}
For sufficiently small $\e$, the solution $u_\e^{-\infty}$ has Morse index $n+3$, and the only nonconstant $SO(n)$ invariant solutions of \eqref{AC} with energy $<E_\e(u_\e^{-\infty})$ are ground states.
\end{lem}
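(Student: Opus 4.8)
The Morse-index part is immediate from the literature: $T_c=T_{1,n-1}$ has Morse index $n+3$ by Perdomo \cite{perdomo2001low}, and by Caju--Gaspar \cite{caju2019solutions} the Allen--Cahn approximation has the same Morse index as its limit interface once $\e$ is small; since the Morse index is invariant under the isometry used to define $u_\e^{-\infty}$ in Lemma \ref{existence of symmetric solution}, this gives $\ind_\e(u_\e^{-\infty})=n+3$.

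For the classification statement the plan is a contradiction/compactness argument. Suppose it fails: there are $\e_j\downarrow 0$ and nonconstant $SO(n)$-invariant solutions $v_j$ of \eqref{AC}, none of them ground states, with $E_{\e_j}(v_j)<E_{\e_j}(u_{\e_j}^{-\infty})$. Since $E_{\e_j}(u_{\e_j}^{-\infty})\to 2\sigma\,\text{Area}(T_c)$, the energies $E_{\e_j}(v_j)$ are uniformly bounded, so after passing to a subsequence the convergence theorem for \eqref{AC} of \cite{hutchinson2000convergence,tonegawa2012stable,guaraco2018min} produces a stationary integral $n$-varifold $W$ on $S^{n+1}$ with $V_{\e_j,v_j}\to\sigma W$, with $v_j\to\pm1$ locally uniformly off $\supp\|W\|$, and with $\|W\|(S^{n+1})=\lim_j\frac{1}{2\sigma}E_{\e_j}(v_j)\le\text{Area}(T_c)=\text{Area}(T_{1,n-1})$. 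From $P_{\#}V_{\e,u}=V_{\e,u\circ P^{-1}}$ and the $SO(n)$-invariance of the $v_j$, the limit $W$ is $SO(n)$-invariant. Two quick checks remain before I can invoke Lemma \ref{minimals4}. First, $W\neq 0$: otherwise $E_{\e_j}(v_j)\to 0$, which by elliptic estimates forces $v_j\to\pm1$ in $C^0$ and hence $v_j\equiv\pm1$ by nondegeneracy of the wells, contradicting that $v_j$ is nonconstant. Second, the associated $\Z_2$-chain $[W]$ is a cycle: the mod-$2$ boundaries $\partial\{v_j>0\}$ of the nodal domains are themselves cycles of uniformly bounded mass, so they subconverge to a mod-$2$ cycle supported in $\supp\|W\|$ which, being the mod-$2$ limit of these nodal boundaries, is exactly the $\Z_2$-chain $[W]$ underlying $W$ (a standard property of Allen--Cahn limit interfaces); hence $\partial[W]=0$. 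Now Lemma \ref{minimals4} applies and $W$ is either a multiplicity-one equator $|S^n|$ or a Clifford hypersurface $T_{1,n-1}$.

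Next I would exclude the Clifford case, which is where the strict inequality enters. As $W$ is $SO(n)$-invariant with $SO(n)$ acting on $x_3,\dots,x_{n+2}$, the only $SO(n)$-invariant $T_{1,n-1}$ is $T_c$, so $v_j$ would be an Allen--Cahn approximation of $T_c$. By (the argument of) Proposition \ref{symmetry} with $(p,q)=(1,n-1)$, after composing with isometries $P_j$ the functions $w_j:=v_j\circ P_j$ are $G_1\times G_2$-invariant. Such a function depends only on the angle $\theta\in[0,\pi/2]$ with $x_1^2+x_2^2=\cos^2\theta$, and in this variable \eqref{AC} becomes a one-dimensional Allen--Cahn equation on $[0,\pi/2]$ weighted by the profile-area $\theta\mapsto\text{Area}\big(S^1(\cos\theta)\times S^{n-1}(\sin\theta)\big)$; the hypersurface $T_c$ corresponds to the unique interior critical point $\theta_0$ of this weight, and $\theta_0$ is nondegenerate. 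The standard Allen--Cahn concentration analysis then shows that a $G_1\times G_2$-invariant Allen--Cahn approximation of $T_c$ is unique up to sign for small $\e$; since $u_{\e_j}^{-\infty}$ is another one (Lemma \ref{existence of symmetric solution}), $w_j=\pm u_{\e_j}^{-\infty}$ and therefore $E_{\e_j}(v_j)=E_{\e_j}(w_j)=E_{\e_j}(u_{\e_j}^{-\infty})$, contradicting $E_{\e_j}(v_j)<E_{\e_j}(u_{\e_j}^{-\infty})$. Hence $W=|S^n|$ is a multiplicity-one equator; by $SO(n)$-invariance it has the form $\{ax_1+bx_2=0\}\cap S^{n+1}$, i.e.\ $v_j$ is an Allen--Cahn approximation of an equatorial sphere, so Corollary \ref{rigidity of ground state} forces $v_j$ to be a ground state, which is the desired contradiction.

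The main obstacle is the Clifford case. Ruling it out is exactly what pins down the role of the strict energy hypothesis, and it rests on two inputs beyond the varifold classification of Lemma \ref{minimals4}: the $SO(n)$-equivariant rigidity of $T_c$ (Proposition \ref{symmetry}), and the uniqueness up to sign of the $G_1\times G_2$-invariant Allen--Cahn profile, i.e.\ the nondegeneracy of $\theta_0$ as a critical point of the profile-area function --- a one-dimensional Lyapunov--Schmidt/concentration argument I would have to carry out (or cite) carefully. A secondary point needing care is the mod-$2$ boundary property $\partial[W]=0$, which is what allows Lemma \ref{minimals4} to be applied without any a priori Morse-index bound on the $v_j$; the remaining steps are routine manipulations with the convergence theorem, the $SO(n)$-symmetry, and the energy comparison.
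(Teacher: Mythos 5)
Your overall architecture coincides with the paper's: argue by contradiction, extract a stationary integral $SO(n)$-invariant limit varifold with area $\leq \mathrm{Area}(T_{1,n-1})$, verify the $\Z_2$-cycle condition so that Lemma \ref{minimals4} applies, and dispose of the equator case via Corollary \ref{rigidity of ground state}. (On the cycle condition, note that "the limit of the nodal boundaries is exactly the $\Z_2$-chain of $W$" is not automatic: one needs the Hutchinson--Tonegawa parity statement, or, as in the paper, the density bound $\Theta<2$ obtained from $\|W\|(S^{n+1})\leq \frac{\pi}{2}\mathrm{Area}(S^n)<2\,\mathrm{Area}(S^n)$ together with the density estimate of Marques--Neves, which identifies $W$ with the multiplicity-one varifold of the reduced boundary of $\{u=1\}$. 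This is a fixable bookkeeping point, not the main issue.)

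The genuine gap is the Clifford case, which is exactly where the strict energy inequality must be used. The paper closes this case by invoking Hiesmayr's rigidity theorem: any solution whose limit interface is a Clifford hypersurface $T_{1,n-1}$ coincides with $u_{\e}^{-\infty}$ up to an ambient isometry for small $\e$, which immediately contradicts $E_{\e_j}(u_j)<E_{\e_j}(u_{\e_j}^{-\infty})$. You replace this citation by a program --- reduce to $G_1\times G_2$-invariant functions of the angle $\theta$, and prove uniqueness up to sign of the invariant single-layer solution by a one-dimensional Lyapunov--Schmidt argument at the nondegenerate critical point $\theta_0$ of the weighted area --- but you do not carry it out, and you acknowledge as much. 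As stated this leaves the key rigidity input unproven: uniqueness of transition-layer solutions of the weighted 1D equation (ruling out, e.g., extra layers or interior bubbles compatible with the energy bound) requires a real argument or a precise citation. Moreover, your preliminary step of applying "the argument of Proposition \ref{symmetry}" to the arbitrary solutions $v_j$ is itself not free: that proof uses the equality $\dim\mathrm{Stab}\,u_j=\dim\mathrm{Stab}\,T_{p,q}$, which rests on the nullity identification established for the constructed Caju--Gaspar solutions; for arbitrary $v_j$ converging with multiplicity one to $T_c$ you would have to rerun the index/nullity semicontinuity argument to get it. So the proposal is correct in outline but incomplete precisely at the step the paper outsources to Hiesmayr's rigidity; either cite that result directly (as the paper does) or supply the equivariant uniqueness proof in full.
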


\begin{proof}

Since the Clifford hypersurface $T_c$ has Morse index $n+3$, by the index estimate above, we know that the Morse index of $u_{\e}^{-\infty}$ is $n+3$ for sufficiently small $\e$.

We argue by contradiction. If the Lemma does not hold, then there exists a sequence $\e_j>0$ such that $\e_j \downarrow 0$, and a sequence $u_j$ of nonconstant $SO(n)$ invariant solutions to \eqref{AC} with $\e=\e_j$ which are not ground states and have energy $<E_{\e_j}(u_{\e_j}^{-\infty})$.

Since these solutions have uniformly bounded energy, by passing to a subsequence, we may assume that the varifolds $\frac{1}{\sigma}V_{\e_j,u_j}$ converge to a stationary integral varifold $\frac{1}{\sigma}V$ in $S^{n+1}$ with area $\leq \text{Area}(T_c)$. Since all varifolds $\frac{1}{\sigma}V_{\e_j,u_j}$ are $SO(n)$ invariant, therefore $\frac{1}{\sigma}V$ is also $SO(n)$ invariant.

We claim that $\frac{1}{\sigma}V$ has density $\mathcal{H}^2$-a.e. equal to $1$ on its support. In fact, $\Theta(\frac{1}{\sigma}V,x) \in \mathbb{Z}_{+}$ for $\mathcal{H}^2$-almost every such $x$. From $\frac{1}{\sigma}\|V\|(S^{n+1})\leq \text{Area}(T_c) \leq \frac{\pi}{2} \text{Area}(S^n) < 2 \text{Area}(S^n)$ and the density estimate in \cite[Lemma A.2]{marques2014min}, we see that the density of $\frac{1}{\sigma}V$ is everywhere strictly less than $2$, proving the claim.

By the remarks about energy loss in \cite{hutchinson2000convergence}, we see that $\frac{1}{\sigma}V$ is the boundary of a region. More precisely, the varifold $\frac{1}{\sigma}V$ agrees with the multiplicity one varifold induced by the reduced boundary of $\{u=1\}$, where $u$ is the function of bounded variation on $S^{n+1}$ given by the a.e. limit of $u_j$. Consequently, the boundary of the $\mathbb{Z}_2$ chain associated to $\frac{1}{\sigma}V$ (in the sense of White \cite{white2009currents}) vanishes. By Lemma \ref{minimals4}, it follows that $\frac{1}{\sigma}V$ is either a multiplicity one equatorial sphere or a Clifford hypersurface $T_{1,n-1}$.

If the limit interface $\supp||V||$ is an equator, then by Corollary \ref{rigidity of ground state}, $u_j$ is a ground state, we get a contradiction.

If the limit interface $\supp||V||$ is a Clifford hypersurface $T_{1,n-1}$, then by Hiesmayr's rigidity result, $u_j$ equals to $u_{\e_j}^{-\infty}$ up to isometry for large $j$. This contradicts the energy bounds $E_{\e_j}(u_j) < E_{\e_j}(u_{\e_j}^{-\infty})$.
\end{proof}

\section{main results} \label{sec:main}

\subsection{Gradient flow of the energy functional}
\

For the critical point $u_{\e}^{-\infty}$ which is the Allen-Cahn approximation of $T_c$, we use the results of \cite{Chen2021mean} to construct $G_2$ invariant gradient flow of $E_{\e}$ with $u_{\e}^{-\infty}$ as its backward limit.

For sufficiently small $\e$, $u_{\e}^{-\infty}$ has Morse index $n+3$, we denote by $\{\varphi_i\}_{i=1}^{n+3}$ an $L^2$-orthonormal basis for the eigenspaces of the linearized Allen-Cahn operator at $u_\e^{-\infty}$ corresponding to negative eigenvalues, where we assume $\varphi_1>0$.

We also recall the solution map $\mathscr{S}_\e=\mathscr{S}\colon B_\eta(0)\subset \R^{n+3} \to C^{2,\alpha}(S^{n+1} \times \R)$, defined for some $\eta=\eta_\e>0$. Using the same argument as in \cite{Chen2021mean} Section $5$, we can choose the eigenfunctions $\varphi_i$ in a way that $G_2$ (rotations on $S^{n+1}$ acting on $x_3, \cdots, x_{n+2}$ coordinates) acts on $\varphi_4, \cdots, \varphi_{n+3}$ by rotations and fixes $\varphi_2,\varphi_3$ pointwise. Note that $\varphi_1$ is fixed since $\varphi_1 > 0$ and the rotations $G_2$ fix $T_c$. 

For each small $\e > 0$, we fix $r = r(\e) \in (0,\eta)$ depending continuously on $\e$.
As shown in the proof of \cite[Proposition 1]{Chen2021mean}, $\mathscr{S}(\pm r, 0, \dots, 0)$ converge to $\pm 1$ as $t \to +\infty$, and for any path connecting $(\pm r, 0, \dots, 0)$ in $\partial B_r(0)$, there is a point $a$ on this path such that $\mathscr{S}(a)$ do not converge to the constant critical points $\pm 1$ of $E_{\e}$. 

Consider the path $l: [-1, 1] \to \partial B_r(0), l(t) = (rt, r\sqrt{1 - t^2}, 0, \dots, 0)$, connecting two points $(\pm r, 0, \dots, 0)$. As mentioned above, we know that there exists $t \in (-1,1)$ such that $\mathscr{S}(l(t))$ does not converge to the constant critical points $\pm 1$. Denote the point $l(t)$ by $a_{\e}$. Since any rotation in $G_2$ fixes $\varphi_1, \varphi_2, \varphi_3$, so $\mathscr{S}(a_\e)$ is $G_2$ invariant.

Combine with Lemma \ref{equatorial}, we can prove the following Proposition:

\begin{prop} \label{AC flow}

For sufficiently small $\e > 0$, there are eternal solutions $\{u_{\e}\}$ of \eqref{PAC} on $S^{n+1}$ such that 
\begin{align*}
u_\e^{-\infty} = \lim_{t \to -\infty} u_\e(\cdot, t) \quad \text{and} \quad u_\e^{+\infty} = \lim_{t \to +\infty} u_\e(\cdot ,t),
\end{align*}
where $u_{\e}^{-\infty}$ is the Allen-Cahn approximation of $T_c$ described above, and $u_{\e}^{+\infty}$ is a ground state solution. Moreover, the solutions $\{u_{\e}\}$ are $G_2$ invariant.

\end{prop}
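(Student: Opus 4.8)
The plan is to take the eternal solution to be $u_\e := \mathscr{S}_\e(a_\e)$ for the point $a_\e = l(t_\e) \in \partial B_r(0)$ produced just above, and to verify the three asserted properties in turn. By the construction of the solution map (carried over from \cite{Chen2021mean}, which rests on the ancient-flow theory of \cite{choi2022ancient}), $\mathscr{S}_\e(a_\e)$ is a solution of \eqref{PAC} defined for all $t \in \R$ with $\mathscr{S}_\e(a_\e)(\cdot,t) \to u_\e^{-\infty}$ in $C^{2,\alpha}(S^{n+1})$ as $t \to -\infty$; so the backward limit is, by definition, the Allen--Cahn approximation $u_\e^{-\infty}$ of $T_c$, and $u_\e$ is eternal. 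Moreover $a_\e$ lies in the $(\varphi_1,\varphi_2)$-coordinate plane, which every rotation in $G_2$ fixes pointwise, while $G_2$ preserves $u_\e^{-\infty}$ (Lemma \ref{existence of symmetric solution}); hence, by the correspondence between isometries of $S^{n+1}$ stabilizing $u_\e^{-\infty}$ and isometries of its negative eigenspace --- exactly as in the $S^3$ case --- $u_\e = \mathscr{S}_\e(a_\e)$ is $G_2$-invariant, as already noted in the discussion preceding the statement.

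Next I would establish that the forward limit exists. Since $a_\e \neq 0$, the curve $t \mapsto \mathscr{S}_\e(a_\e)(\cdot,t)$ is not the stationary flow $u_\e^{-\infty}$ (its tangent at $t = -\infty$ is the nonzero combination $\sum_i (a_\e)_i \varphi_i$); because \eqref{PAC} is the $L^2$-gradient flow of $E_\e$, which is non-increasing along it, forward uniqueness together with $\mathscr{S}_\e(a_\e)(\cdot,t) \to u_\e^{-\infty}$ forces $E_\e(u_\e(\cdot,t)) < E_\e(u_\e^{-\infty})$ for every $t \in \R$ (otherwise the flow would be stationary and equal to $u_\e^{-\infty}$, forcing $a_\e = 0$). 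The orbit $\{u_\e(\cdot,t)\}_{t \ge 0}$ is bounded in $C^{2,\alpha}$ by the maximum principle and parabolic Schauder estimates, hence precompact; since $W$ is real-analytic, the {\L}ojasiewicz--Simon inequality applies (as in \cite{Chen2021mean}), so $u_\e(\cdot,t)$ converges in $C^2$ as $t \to +\infty$ to a single critical point $u_\e^{+\infty}$ of $E_\e$, i.e. a solution of \eqref{AC}, with $E_\e(u_\e^{+\infty}) \le E_\e(u_\e(\cdot,0)) < E_\e(u_\e^{-\infty})$. Being a $C^2$-limit of $G_2$-invariant functions, $u_\e^{+\infty}$ is $G_2$-invariant, hence $SO(n)$-invariant.

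It then remains to identify $u_\e^{+\infty}$ via the rigidity Lemma \ref{equatorial}. By the choice of $a_\e$, the flow $\mathscr{S}_\e(a_\e)$ does not converge to the constants $\pm 1$, so $u_\e^{+\infty} \neq \pm 1$. Any other constant critical point $c$ of $E_\e$ satisfies $W(c) > 0$, hence $E_\e(c) = \e^{-1} W(c)\,\text{Vol}(S^{n+1}) \to \infty$ as $\e \downarrow 0$, whereas $E_\e(u_\e^{-\infty}) \to 2\sigma\,\text{Area}(T_c)$; so for all sufficiently small $\e$ we have $E_\e(c) > E_\e(u_\e^{-\infty}) > E_\e(u_\e^{+\infty})$, and therefore $u_\e^{+\infty} \neq c$. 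Thus $u_\e^{+\infty}$ is a nonconstant $SO(n)$-invariant solution of \eqref{AC} with $E_\e(u_\e^{+\infty}) < E_\e(u_\e^{-\infty})$, and Lemma \ref{equatorial} forces it to be a ground state solution.

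Most of this argument is a transcription of the $S^3$ construction of \cite{Chen2021mean}; the one genuinely new input is the final appeal to Lemma \ref{equatorial}, which in turn rests on the symmetric varifold classification Lemma \ref{minimals4} --- the substitute, under $SO(n)$ symmetry, for the still-open Solomon--Yau conjecture. I expect the main obstacle to be the existence of the forward limit as a \emph{single} critical point: one must check precompactness of the $G_2$-invariant orbit and that the gradient-flow convergence machinery genuinely applies. A secondary point to pin down is that the openness of the sets $U_\pm$ (from which the non-convergence of $\mathscr{S}_\e(a_\e)$ to $\pm 1$ was deduced) indeed yields $u_\e^{+\infty} \neq \pm 1$ --- equivalently, that $\pm 1$, being strict local minima of $E_\e$, cannot occur in the $\omega$-limit of $u_\e$ without the whole flow converging to them.
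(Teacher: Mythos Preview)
Your proposal is correct and follows essentially the same route as the paper: take $u_\e = \mathscr{S}_\e(a_\e)$, inherit the backward limit and the $G_2$-invariance from the construction preceding the statement, and combine a Łojasiewicz--Simon argument with Lemma \ref{equatorial} to pin down the forward limit as a ground state. The only minor difference is the order of the last two steps: you first obtain full forward convergence via the analytic Łojasiewicz--Simon inequality and then identify the limit, whereas the paper first uses Lemma \ref{equatorial} to see that every $\omega$-limit point is a ground state and then invokes the Morse--Bott version of the Łojasiewicz--Simon inequality (via \cite{caju2020ground} and \cite{feehan2020lojasiewicz}) at that critical level to upgrade to full convergence.
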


\begin{proof}
It remains to prove the full convergence of $u_\e(\cdot,t)$, as $t \to +\infty$. By \cite{caju2020ground}, least energy unstable critical points of the energy functional $E_\e$ are unique up to ambient isometries. In particular, $E_\e$ is a \emph{Morse-Bott} functional at this critical level. Thus, the convergence of $u_\e(\cdot, t)$ to $u_\e^{+\infty}$ in $W^{1,2}$ is a consequence of the Łojasiewicz-Simon gradient inequality for such functionals, see e.g. \cite{feehan2020lojasiewicz}.
\end{proof}

\begin{rmk}

In particular, any limit interface obtained from the limits $u_{\e}^{+\infty}$ is a multiplicity one equatorial sphere, and it holds $E_{\e}(u_{\e}^{+\infty}) \to (2\sigma) \cdot \text{Area}(S^n)$.

\end{rmk}

\subsection{Limit flow}
\

Now we analyze the limit of the gradient flow given by Proposition \ref{AC flow} as $\e \downarrow 0$. Using the similar argument as in Section $4.2$ in the author's previous joint work with P. Gaspar \cite{Chen2021mean},  we can show that the gradient flow satisfies the necessary conditions to take the limit as $\e \downarrow 0$ and obtain a codimension one Brakke flow on the sphere $S^{n+1}$.

Since $E_{\epsilon} (u_{\epsilon}^{-\infty}) \to 2\sigma \text{Area}(T_{1,n-1})$, and $E_{\epsilon}(u_\epsilon^{+\infty}) \to 2\sigma \text{Area}(S^n)$. Thus given a small $\delta > 0$, for sufficiently small $\epsilon > 0$ (depending on $\delta$), we have 
    \[2\sigma \text{Area}(T_{1,n-1}) - \delta \leq E_{\epsilon} (u_{\epsilon}^{-\infty}) \leq 2\sigma \text{Area}(T_{1,n-1}) + \delta \quad \text{and} \quad E_{\epsilon} (u_{\epsilon}^{+\infty}) \leq 2\sigma \text{Area}(S^n) + \delta.\]

Recall that the energy $E_{\epsilon}(u_{\epsilon}(\cdot,t))$ is a continuous strictly decreasing function of $t$. By picking a sufficiently small $\delta>0$ and by noting that this solution joins $u_{\epsilon}^{-\infty}$ to $u_{\epsilon}^{+\infty}$, we see that there exists $t(\epsilon) \in \R$ such that $E_{\epsilon}\left(\, u_{\epsilon}(\cdot,t(\epsilon))\,\right) = 2\sigma (1.2 \text{Area}(S^n))$ (as $\text{Area}(S^n) < 1.2 \text{Area}(S^n) < \text{Area}(T_{1,n-1})$ by Lemma \ref{monotonicity and bound of ratio}). By translating the gradient flow $u_\epsilon(\cdot,t)$ to $u_{\epsilon}(\ \cdot \ , t+t(\epsilon))$, we can assume that $E_{\epsilon}\left(\,u_{\epsilon}(\cdot,0)\,\right) = 2\sigma (1.2 \text{Area}(S^n))$ for all small $\epsilon$.

By the convergence result for solutions of \eqref{PAC} of Ilmanen \cite{ilmanen1993convergence} and Tonegawa \cite{tonegawa2003integrality} (see also Sato \cite{sato2008simple}), after passing to a subsequence (not relabeled) with $\e\downarrow 0$, the varifolds $V_{\e,t}$ associated to $u_{\epsilon}(\cdot,t)$ converge, for every $t \in \R$, to a $n$-varifold $V_t$, and the underlying Radon measures $\frac{1}{\sigma}\mu_{\e,t}=\frac{1}{\sigma}\|V_{\e,t}\|$ converge to a Radon measure       \begin{equation}\Sigma_t:=\frac{1}{\sigma}\mu_t=\frac{1}{\sigma}\|V_t\| \end{equation}
which satisfies the mean curvature flow equation in the sense of Brakke. Moreover,
     \[\frac{1}{2\sigma} E_{\epsilon}\left(\,u_{\epsilon}(\ \cdot \ ,t)\,\right) \to \|\Sigma_t\| (S^{n+1}), \quad \text{as} \quad \epsilon \downarrow 0,\]
and, for almost every $t \in \R$, the varifold $\frac{1}{\sigma}V_t$ is an integral varifold.
   
More precisely, we apply the convergence result to $u_\e$ on $S^{n+1} \times [-m,+\infty)$ for each $m \in \mathbb{N}$ to obtain the (subsequential) convergence of $\frac{1}{\sigma}V_{\e,t}$ for all $t \geq -m$. By picking a diagonal subsequence, we get the convergence described above.\medskip

Since $\frac{1}{2\sigma} E_{\epsilon}\left(\,u_{\epsilon}(\ \cdot \ ,0)\,\right) = 1.2 \text{Area}(S^n)$ for all small $\epsilon$, we see that 
    \[\|\Sigma_{-t}\|(S^{n+1}) = \lim_{\e \downarrow 0} \frac{1}{2\sigma}E_{\e}(u_\e(\cdot,-t)) \geq \lim_{\e \downarrow 0} \frac{1}{2\sigma}E_{\e}(u_\e(\cdot,0)) = 1.2 \text{Area}(S^n)\]
and, similarly, $\|\Sigma_t\|(S^{n+1}) \leq 1.2 \text{Area}(S^n)$, for every $t \geq 0$. Note also that $\|\Sigma_t\|(S^{n+1}) \leq \text{Area}(T_{1,n-1})$ for all $t \in \R$. In fact, since $\frac{1}{2\sigma}E_{\epsilon} (u^{-\infty}_{\epsilon}) \to \text{Area}(T_{1,n-1})$ as $\epsilon \downarrow 0$, for each $\delta' > 0$, there exists a $\epsilon' > 0$ small, such that 
    \[\frac{1}{2\sigma}E_{\epsilon} (u^{-\infty}_{\epsilon}) < \text{Area}(T_{1,n-1}) + \delta', \quad \text{for all} \  \epsilon \in (0,\epsilon').\]
By noting that $E_{\epsilon}\left(\, u_{\epsilon}(\ \cdot 
\ ,t)\,\right)\leq E_\e(u_\e^{-\infty})$ for all $t$, we obtain $\|\Sigma_t\|(S^{n+1}) \leq \text{Area}(T_{1,n-1}) + {\delta'}$, for every $t$. Since $\delta'$ is arbitrary, this implies that $\|\Sigma_t\|(S^{n+1})\leq \text{Area}(T_{1,n-1})$.

By abuse of notation, we will identify $\Sigma_t$ with its support, which is, for almost every $t \in \R$, a $n$-dimensional rectifiable set. We want to show that $\Sigma_t$ (and the associated varifolds $\frac{1}{\sigma}V_t$) converge to a multiplicity one Clifford hypersurface, as $t \to -\infty$  along subsequences, and to a multiplicity one equatorial sphere, as $t \to +\infty$, also along subsequences.

We now work on the symmetry property of $\Sigma_t$. 
Since $\mathscr{S}(a_\e)$ is $G_2$ invariant, then the varifolds $V_{\e, \mathscr{S}(a_\e)(\cdot, t)}$ is also $G_2$ invariant, so are the varifolds after time translation, the limit Brakke flow $\Sigma_t$ and the limits of $\Sigma_t$ as $t \to \pm \infty$.

We will need further information about the parity of the multiplicity of the limit varifold $V_t$, as described in the following lemma. It intuitively says that the interfaces fold an even number of times near a point in $\supp\|V_t\|$ if, and only if, $u_\e(\cdot, t)$ converges to the same value on the two sides of this surface. This was proved by Hutchinson-Tonegawa \cite{hutchinson2000convergence} in the elliptic case; see also Takasao-Tonegawa \cite{takasao2016existence} in the parabolic case, for an equation with a transport term in Euclidean domains or in a torus.

\begin{lem} \label{odd-even-density}
For almost every $t\in \R$, the density of the varifold $\Sigma_t$ satisfies 
\[\Theta(\Sigma_t,x)=\left\{
\begin{aligned}
\rm{odd}&  \ \ \  \mathcal{H}^{2}\text{-a.e.} \  x \in M_t, \\
\rm{even}&  \ \ \  \mathcal{H}^{2}\text{-a.e.}\ x \in \mathrm{supp}\|\Sigma_t\| \setminus M_t,
\end{aligned}
\right.\]
where $M_{t}$ is the reduced boundary of $\{u_0(\cdot,t) = 1\}$, and $u_0(\cdot,t)$ is the bounded variation function given by the weak-$*$ limit of $u_\e(\cdot,t)$, as functions of bounded variation.
\end{lem}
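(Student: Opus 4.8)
This is the parabolic analogue of a known elliptic fact --- the odd/even dichotomy of Hutchinson--Tonegawa \cite{hutchinson2000convergence}, whose parabolic version was obtained by Takasao--Tonegawa \cite{takasao2016existence} for a reaction--diffusion equation with transport term on Euclidean domains and tori. The plan is to show that their argument applies in our setting (a closed Riemannian manifold $S^{n+1}$ and no transport term, which only simplifies it) and to record the main points; the one genuinely parabolic ingredient is the selection of good time slices.

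\emph{Good time slices.} Fix the subsequence $\e_j\downarrow 0$ along which the conclusions of the parabolic convergence theorem hold. I would choose a full--measure set $\mathcal G\subset\R$ so that for every $t\in\mathcal G$: (i) $\tfrac1\sigma V_t$ is integral, and $u_{\e_j}(\cdot,t)\to u_0(\cdot,t)$ in $L^1(S^{n+1})$ (hence a.e.\ along a further subsequence) and weakly--$*$ in $BV$, so $u_0(\cdot,t)\in\{\pm1\}$ a.e.\ and $M_t=\partial^*\{u_0(\cdot,t)=1\}$ is well defined; (ii) the discrepancy measures $\big(\tfrac{\e_j}{2}|\nabla u_{\e_j}(\cdot,t)|^2-\tfrac1{\e_j}W(u_{\e_j}(\cdot,t))\big)\,d\mu_g$ converge to $0$; and (iii) $\liminf_j\,\e_j\!\int_{S^{n+1}}|\partial_t u_{\e_j}(\cdot,t)|^2\,d\mu_g<\infty$. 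Properties (i) and (ii) hold a.e.\ by the convergence theorem for \eqref{PAC} and \cite{tonegawa2003integrality}; (iii) holds a.e.\ by Fatou's lemma applied to the functions $t\mapsto \e_j\!\int_{S^{n+1}}|\partial_t u_{\e_j}(\cdot,t)|^2\,d\mu_g$, whose integrals over $\R$ equal $E_{\e_j}(u_{\e_j}^{-\infty})-E_{\e_j}(u_{\e_j}^{+\infty})$ and are bounded uniformly in $j$. The consequence of (iii) is that, along a further ($t$--dependent) subsequence, $u_{\e_j}(\cdot,t)$ solves $\e_j^2\Delta u_{\e_j}-W'(u_{\e_j})=g_j$ with $\|g_j\|_{L^2(S^{n+1})}=\e_j^{3/2}\big(\e_j\!\int|\partial_t u_{\e_j}(\cdot,t)|^2\big)^{1/2}\to0$; i.e.\ at a good time slice $u_{\e_j}(\cdot,t)$ is an almost--solution of \eqref{AC} with $L^2$--error vanishing. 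This is the substitute, at a.e.\ time, for the elliptic equation. Note that one should \emph{not} expect this error to vanish along a fixed subsequence for a.e.\ $t$ (this can fail even under the uniform $L^1$ bound, e.g.\ for $f_j$ constantly equal to a fixed nonzero integrable function of $t$); the weaker a.e.\ boundedness of the $\liminf$ is all that survives, and it is exactly what is needed once the factor $\e_j^{3/2}$ is extracted.

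\emph{Local structure and parity.} At such a $t$, the integrality of $\tfrac1\sigma V_t$, the vanishing of the discrepancy, and the $L^2$--smallness of $g_j$ place us in the framework of the Hutchinson--Tonegawa structure analysis: near $\mathcal H^{n}$--a.e.\ $x_0\in\supp\|V_t\|$ the functions $u_{\e_j}(\cdot,t)$, rescaled by $\e_j$ in the direction normal to the approximate tangent plane of $V_t$ at $x_0$, converge to a finite superposition of translates of the heteroclinic profile $\mathbb H$ and of $-\mathbb H(-\,\cdot\,)$, and the number $N(x_0)$ of such transition layers equals $\Theta(\Sigma_t,x_0)$. Since $M_t\subset\supp\|V_t\|$ up to an $\mathcal H^{n}$--null set (the varifold form of the Modica--Mortola lower bound, in \cite{hutchinson2000convergence}), the parity of $N(x_0)$ is forced by the boundary values of $u_0(\cdot,t)$: if $x_0\in M_t$, then $u_0(\cdot,t)=+1$ on one side of $\supp\|V_t\|$ near $x_0$ and $=-1$ on the other, so an odd number of layers is needed; if $x_0\in\supp\|V_t\|\setminus M_t$, then $u_0(\cdot,t)$ is the same constant on both sides, so $N(x_0)$ is even (and $\ge2$, since $x_0\in\supp\|V_t\|$). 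This gives the dichotomy for every $t\in\mathcal G$, hence a.e.

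The main obstacle is precisely the passage from the parabolic energy--dissipation bound (hypothesis (c) of the convergence theorem) to usable pointwise--in--time control of $\partial_t u_{\e_j}$, i.e.\ property (iii) above and the way it is used; everything after that is local and follows \cite{hutchinson2000convergence, takasao2016existence} with only notational changes, the closedness of $S^{n+1}$ and the absence of a transport term causing no difficulty.
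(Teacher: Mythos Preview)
The paper does not actually prove this lemma: the paragraph preceding it explains the intuition and cites Hutchinson--Tonegawa \cite{hutchinson2000convergence} for the elliptic case and Takasao--Tonegawa \cite{takasao2016existence} for the parabolic case (on Euclidean domains or tori), and then moves directly on to Theorem~\ref{graphical}. So there is nothing to compare against beyond those citations.

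Your proposal is a correct and careful outline of precisely the adaptation the paper leaves implicit, and in fact goes further than the paper by spelling out the one genuinely parabolic step --- selecting a full-measure set of times at which $\liminf_j \e_j\int_{S^{n+1}}|\partial_t u_{\e_j}(\cdot,t)|^2<\infty$ via Fatou, and using this to reduce to an almost-elliptic equation with error $g_j=\e_j^2\partial_t u_{\e_j}$ satisfying $\|g_j\|_{L^2}=\e_j^{3/2}\big(\e_j\!\int|\partial_t u_{\e_j}|^2\big)^{1/2}\to 0$ along a $t$-dependent subsequence. Your warning that one only gets boundedness of the $\liminf$ (not vanishing along a fixed subsequence) is a good point and exactly what the argument needs. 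After that reduction, the layer-counting/parity argument is the Hutchinson--Tonegawa one, as you say. In short: your outline is sound and strictly more informative than what the paper provides here.
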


Finally, we can describe the limits of the Brakke flow $\Sigma_t$ in $S^{n+1}$.

\begin{thm} \label{graphical}
As $t \to -\infty$, the varifold $\frac{1}{\sigma}V_t$ converges to a multiplicity one Clifford hypersurface $T_{1,n-1}$ in $S^{n+1}$, and its support converges graphically to this torus.
As $t \to +\infty$, the varifold $\frac{1}{\sigma}V_t$ subconverges to a multiplicity one equatorial sphere.
\end{thm}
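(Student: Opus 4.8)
The plan is to analyze the backward and forward limits separately, in each case extracting a subsequential limit of the measures $\Sigma_t = \frac{1}{\sigma}\|V_t\|$ and identifying it via the classification lemma. First I would treat $t \to -\infty$. For any sequence $t_k \to -\infty$, the Brakke flow is (essentially) monotone in mass, so $\|\Sigma_{t_k}\|(S^{n+1})$ is nondecreasing and bounded above by $\mathrm{Area}(T_{1,n-1})$; moreover it is bounded below by $1.2\,\mathrm{Area}(S^n)$ by the time-normalization at $t=0$. By Brakke's compactness and Huisken-type monotonicity, after passing to a subsequence $\frac{1}{\sigma}V_{t_k}$ converges to a stationary integral varifold $V_{-\infty}$ in $S^{n+1}$ (stationarity because the mass is asymptotically constant, forcing the generalized mean curvature to vanish in the limit). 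Since each $V_{t}$ is $SO(n)$-invariant, so is $V_{-\infty}$. To apply Lemma \ref{minimals4} I need $\partial[V_{-\infty}] = 0$ as a $\Z_2$ chain: this follows because the flow is cyclic mod $2$ in the sense of White \cite{white2009currents} — by Lemma \ref{odd-even-density} the density of $\Sigma_t$ is odd exactly on the reduced boundary $M_t$ of $\{u_0(\cdot,t)=1\}$ and even off it, so $[\Sigma_t]$ coincides with the boundary of the Caccioppoli set $\{u_0(\cdot,t)=1\}$ and hence is a mod-$2$ cycle; this property passes to the limit. Thus $V_{-\infty}$ is either a multiplicity one equator or the Clifford hypersurface $T_{1,n-1}$ (with multiplicity one, by the density bound $<2$).

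To rule out the equator as the backward limit, I would use the mass lower bound: $\|\Sigma_{-\infty}\|(S^{n+1}) = \lim_k \|\Sigma_{t_k}\|(S^{n+1}) \geq 1.2\,\mathrm{Area}(S^n) > \mathrm{Area}(S^n)$, so $V_{-\infty}$ cannot be a multiplicity one equator; it must therefore be $T_{1,n-1}$, and since this holds for every subsequence, the full limit as $t \to -\infty$ exists and equals (a) multiplicity one $T_{1,n-1}$. For the graphical convergence, I would invoke Allard's regularity theorem: once $\frac{1}{\sigma}V_{t}$ is $C^0$-close to the smooth minimal hypersurface $T_{1,n-1}$ as varifolds (which follows from the varifold convergence above), the density ratios are close to $1$, so Allard gives uniform $C^{1,\alpha}$ estimates exhibiting $\supp\|V_t\|$ as a graph over $T_{1,n-1}$ with small norm; higher regularity and the graphical convergence then follow from Schauder estimates for the (nearly static) flow, exactly as in \cite{Chen2021mean}.

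For $t \to +\infty$ the argument is parallel but simpler: for any $t_k \to +\infty$, the mass $\|\Sigma_{t_k}\|(S^{n+1})$ is nonincreasing, bounded below by $\mathrm{Area}(S^n)$ and above by $1.2\,\mathrm{Area}(S^n) < \mathrm{Area}(T_{1,n-1})$; the same compactness-plus-monotonicity argument produces a subsequential stationary integral $SO(n)$-invariant limit $V_{+\infty}$ with $\partial[V_{+\infty}]=0$, and by Lemma \ref{minimals4} together with the strict upper bound $\|\Sigma_{+\infty}\|(S^{n+1}) \leq 1.2\,\mathrm{Area}(S^n) < \mathrm{Area}(T_{1,n-1})$, the Clifford hypersurface is excluded, so $V_{+\infty}$ is a multiplicity one equatorial sphere. (Here I only claim subconvergence, since a priori different equators could arise along different subsequences — consistent with the statement.)

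The main obstacle I anticipate is the step showing that the subsequential limit varifold is \emph{stationary} and \emph{integral} with the right mass, i.e. that no mass is lost in the limit $t_k \to \pm\infty$ and that the unit-density/integrality structure survives passing to the limit in $t$. This requires combining the monotonicity of mass along the Brakke flow (to get asymptotic constancy of mass, hence vanishing mean curvature in the limit) with the a.e.-in-$t$ integrality from the convergence theorem, plus an argument — as in \cite{Chen2021mean}, \cite{ilmanen1993convergence}, \cite{white2009currents} — that the cyclic-mod-$2$ structure and the density parity from Lemma \ref{odd-even-density} are preserved. Verifying that $\Sigma_t$ for a well-chosen sequence of times (not just a.e. $t$) is integral and that the $\Z_2$-cycle condition holds in the limit is the delicate point; everything after that is a direct application of Lemma \ref{minimals4}, the mass bounds, and Allard's theorem.
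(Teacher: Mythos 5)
Your identification of the limits follows the same route as the paper: subsequential limits at $t\to\pm\infty$ are stationary integral $SO(n)$-invariant varifolds whose associated $\Z_2$ chains are cycles (Lemma \ref{odd-even-density} plus White \cite{white2009currents}), so Lemma \ref{minimals4} applies, and the normalization $\|\Sigma_0\|(S^{n+1})=1.2\,\mathrm{Area}(S^n)$ separates the backward limit (Clifford) from the forward limit (equator). Two steps, however, are not carried out correctly. First, the stationarity and integrality of the limits, which you flag as the delicate point but leave open, is resolved in the paper not by working with time slices $V_{t_k}$ but by applying Brakke's compactness theorem to the time-translated flows $\Sigma^{(i)}_t=\Sigma_{t+\theta_i}$: the limit is an integral Brakke flow whose mass is constant in $t$ (by the monotone mass bounds), and a constant-mass integral Brakke flow is supported on a stationary integral varifold. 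Your slice-wise argument (``mass asymptotically constant forces $H$ to vanish in the limit'') needs exactly this packaging to be rigorous, since integrality is only known for a.e.\ $t$ and varifold (not just measure) convergence of arbitrary slices is not automatic.

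Second, the graphical convergence as $t\to-\infty$ cannot be obtained from Allard's theorem applied to the slices $V_t$: Allard requires smallness of the first variation (mean curvature in $L^p$, $p>n$), and the slices of a Brakke flow only carry $L^2$ mean curvature for a.e.\ $t$, with no uniform control as $t\to-\infty$. The correct tool is the parabolic $\e$-regularity, i.e.\ Brakke's local regularity theorem \cite{Brakke}: once the backward limit is identified as the smooth multiplicity one hypersurface $T_{1,n-1}$, the flow is locally close to a static smooth flow on space-time regions for $t$ sufficiently negative, which gives smoothness and graphical convergence; this, together with the rigidity of ancient flows in \cite{choi2022ancient}, is how the paper upgrades subconvergence to full convergence backward in time. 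Relatedly, your claim that full backward convergence follows ``since this holds for every subsequence'' needs the extra observation (made in the paper via the $G_2$-symmetry) that $T_c$ is the only $G_2$-invariant Clifford hypersurface, so all subsequential limits coincide; without pinning down the specific hypersurface, different subsequences could a priori converge to rotated copies.
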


\begin{proof}
Consider any sequence $\theta_i \uparrow \infty$, and the sequence of translated Brakke flows $\{\Sigma_t^{(i)} := \Sigma_{t+ \theta_i}\}_{t \geq 0}$. By Brakke’s compactness theorem and the uniform boundedness of areas, $(\Sigma_t^{(i)})_{t\geq 0}$ converges subsequentially to an integral Brakke flow with constant area. Therefore, this Brakke flow is supported on a stationary integral varifold $\frac{1}{\sigma} V_{+\infty}$. Similarly, $\Sigma_t$ subconverges, as $t \to -\infty$, to a stationary integral varifold $\frac{1}{\sigma} V_{-\infty}$.

By Lemma \ref{odd-even-density}, the associated $\mathbb{Z}_2$ chain of $\frac{1}{\sigma}V_t$ is $M_0^{t}$ is the reduced boundary of $\{u_0^{t} = 1 \}$, thus it has vanishing boundary. By White \cite{white2009currents} Theorem $4.2$, we know that the associated $\mathbb{Z}_2$ chain of any subsequential limit varifold $\frac{1}{\sigma} V_{+\infty}$ or $\frac{1}{\sigma} V_{-\infty}$ has zero boundary.

We have shown the area estimate $\|\Sigma_t\|(S^{n+1}) \leq \text{Area}(T_{1,n-1})$ and $\Sigma_t$ is $G_2$ invariant. By Lemma \ref{minimals4}, it follows that any such limit $\frac{1}{\sigma} V_{-\infty}$ or $\frac{1}{\sigma} V_{+\infty}$ is either a multiplicity one equatorial sphere or a multiplicity one  Clifford hypersurface $T_{1,n-1}$. On the other hand, we have the inequality $\|\Sigma_{-t}\|(S^{n+1}) \geq 1.2 \text{Area}(S^n) \geq \|\Sigma_t\|(S^{n+1})$ for every $t \geq 0$. Thus $\frac{1}{\sigma} \| V_{-\infty}\|(S^{n+1}) \geq 1.2 \text{Area}(S^n) \geq \frac{1}{\sigma} \|V_{+\infty}\|(S^{n+1})$, and we conclude that any subsequential limit $\frac{1}{\sigma} V_{-\infty}$ is a Clifford hypersurface $T_{1,n-1}$, and any subsequential limit $\frac{1}{\sigma} V_{+\infty}$ is a multiplicity one equatorial sphere.

The convergence as $t \to -\infty$ and the graphical convergence of $\Sigma_t$ to the minimal torus, follows from \cite{choi2022ancient}, by means of Brakke's local regularity for the mean curvature flow \cite{Brakke}, as the characterization of the backward limit allows us to obtain the smoothness of $\Sigma_t$ for sufficiently negative time.
\end{proof} 

\begin{rmk}

Since the backward limit and the forward limit of the Brakke flow $\Sigma_t$ are $G_2$ invariant, i.e., invariant under any rotations on the $x_3, \cdots, x_{n+2}$ coordinates. It's not hard to see that the backward limit needs to be the Clifford hypersurface $T_c$, which is also fixed by the rotations on the $x_1,x_2$ coordinates. Since all equatorial spheres that are invariant under $G_2$ are not fixed under rotations on the $x_1,x_2$ coordinates, therefore by rotations on the $x_1,x_2$ coordinates, we get a one-parameter family of Brakke flows on $S^{n+1}$, which converges to an equatorial sphere and to the Clifford hypersurface $T_c$, as $t \to \pm \infty$, respectively. This concludes the last part of Theorem \ref{main}.

\end{rmk}

\appendix

\section{Proof of Lemma \ref{monotonicity and bound of ratio}} \label{proof of density2}

We require the following two lemmas to prove Lemma \ref{monotonicity and bound of ratio}. The proof for the second lemma is omitted as it can be proved similarly to the first one.

\begin{lem} \label{h}
Let $h(x) = \sqrt{\frac{(x+1)^{x+3}x^{x-2}}{(x+2)^{x+2}(x-1)^{x-1}}}$ be the function defined on $[2,+\infty)$, then $h(x)$ is an increasing function with limit $1$ at $+\infty$. Therefore $h(x) < 1$.
\end{lem}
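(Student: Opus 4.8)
The plan is to pass to logarithms. Set
\[
f(x) := 2\log h(x) = (x+3)\log(x+1) + (x-2)\log x - (x+2)\log(x+2) - (x-1)\log(x-1),
\]
so that the Lemma reduces to showing that $f$ is strictly increasing on $[2,\infty)$ and that $f(x)\to 0$ as $x\to\infty$. Granting these, $f<0$ on $[2,\infty)$, and since $h = e^{f/2}$, it follows that $h$ is strictly increasing, $h(x)<1$, and $h(x)\to 1$.

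For monotonicity I would differentiate directly. A short computation gives
\[
f'(x) = \log\frac{x(x+1)}{(x-1)(x+2)} + \frac{x+3}{x+1} + \frac{x-2}{x} - 2 ,
\]
and, using $x(x+1) = x^2+x$ and $(x-1)(x+2) = x^2+x-2 > 0$ for $x\geq 2$, the rational part collapses to $\tfrac{2}{x+1}-\tfrac2x = -\tfrac{2}{x^2+x}$, so that
\[
f'(x) = \log\!\left(1 + \frac{2}{x^2+x-2}\right) - \frac{2}{x^2+x}.
\]
Writing $a = \tfrac{2}{x^2+x-2} > 0$, one checks $1+a = \tfrac{x^2+x}{x^2+x-2}$ and hence $\tfrac{a}{1+a} = \tfrac{2}{x^2+x}$; therefore $f'(x) = \log(1+a) - \tfrac{a}{1+a} > 0$ by the elementary inequality $\log(1+a) > \tfrac{a}{1+a}$ for $a>0$ (itself obtained by noting both sides vanish at $a=0$ and differentiating).

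For the limit, instead of a Taylor expansion I would regroup using $q(t):=t\log t$. Since $(x+3)\log(x+1) = q(x+1) + 2\log(x+1)$ and $(x-2)\log x = q(x) - 2\log x$, we get
\[
f(x) = \big[q(x+1)+q(x)-q(x+2)-q(x-1)\big] + 2\log\!\left(1+\tfrac1x\right).
\]
Using $q'(t) = \log t + 1$ and the substitution $t\mapsto t+2$ in one of the resulting integrals, the bracket equals $-\int_{x-1}^{x}\log(1+2/t)\,dt$, whose value lies in $(0, 2/(x-1))$ and hence tends to $0$; the second term plainly tends to $0$ as well. Thus $f(x)\to 0$, and the proof is complete.

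I do not anticipate a genuine obstacle: the computation of $f'$ and its sign are elementary, and the only mild care needed is in organizing the limit so that it stays clean, which the $q(t)=t\log t$ regrouping accomplishes. One should just keep track that $x^2+x-2=(x+2)(x-1)>0$ throughout $[2,\infty)$, so that all logarithms and the inequality $\log(1+a)>a/(1+a)$ are legitimately applied.
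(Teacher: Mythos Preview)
Your proof is correct and follows the same overall logarithmic strategy as the paper, but both halves are executed differently. For monotonicity, the paper computes the second derivative of $\ln h$, shows it is negative, and then argues that since $(\ln h)'\to 0$ at infinity, $(\ln h)'>0$ everywhere; you instead recognize $f'(x)=\log(1+a)-\tfrac{a}{1+a}$ and invoke the elementary inequality $\log(1+a)>\tfrac{a}{1+a}$ directly. Your route is shorter and avoids the second-derivative computation altogether. For the limit, the paper rewrites $h(x)^2=\bigl(\tfrac{x^2+x}{x^2+x-2}\bigr)^{x-2}\cdot\tfrac{(x+1)^5}{(x+2)^4(x-1)}$ and reads off the limit from each factor; your $q(t)=t\log t$ regrouping and integral bound is a bit more elaborate but equally valid. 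One cosmetic point: when you say the bracket ``lies in $(0,2/(x-1))$'' you presumably mean the integral $\int_{x-1}^{x}\log(1+2/t)\,dt$, since the bracket itself is its negative; the conclusion that it tends to $0$ is unaffected.
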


\begin{proof}

One can see that $h(x) > 0$. By computation, we get
\begin{align*}
& \frac{d}{dx} \ln(h(x)) = \frac{1}{2}(\ln(x+1) + \ln(x) - \ln(x+2) - \ln(x-1) - \frac{2}{x(x+1)}), \\
& \frac{d^2}{dx^2} \ln(h(x)) = - \frac{1}{(x^2+x-2)(x^2+x)^2} (4x+2) < 0.
\end{align*}

Then we know that $\frac{d}{dx} \ln(h(x))$ is a decreasing function, and it's easy to check that its limit as $x \to +\infty$ is $0$, thus $\frac{d}{dx} \ln(h(x)) > 0$. Therefore $h(x)$ is an increasing function.

For the limit of $h(x)$ as $x \to +\infty$, we have the following computation:

\begin{align*}
\lim\limits_{x \to +\infty} h(x) = \lim\limits_{x \to +\infty} \sqrt{(\frac{x^2 + x}{x^2+x-2})^{x-2} \cdot \frac{(x+1)^5}{(x+2)^4(x-1)}} = \lim\limits_{x \to +\infty} \sqrt{(1 + \frac{2}{x^2+x-2})^{x-2}} = 1.
\end{align*}

\end{proof}

\begin{lem} \label{m}
Let Let $m(x) = \sqrt{\frac{x^{2x-2}(x+1)^4 (x+3)^{x+3}}{(x-1)^{x-1}(x + 2)^{2x + 6}}}$ be the function defined on $[2,+\infty)$, then $m(x)$ is an increasing function with limit $1$ at $+\infty$. Therefore $m(x) < 1$.
\end{lem}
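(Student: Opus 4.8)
The plan is to follow the scheme of Lemma~\ref{h}: pass to the logarithm, show that $\ln m$ is increasing by differentiating twice, compute $\lim_{x\to+\infty}m(x)$ directly, and then read off $m<1$.

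First I would write
\[
2\ln m(x) = (2x-2)\ln x + 4\ln(x+1) + (x+3)\ln(x+3) - (x-1)\ln(x-1) - (2x+6)\ln(x+2),
\]
differentiate, and simplify the pieces $\tfrac{2x-2}{x}$, $\tfrac{x+3}{x+3}$, $\tfrac{x-1}{x-1}$, $\tfrac{2x+6}{x+2}$, which leaves
\[
f(x):=\frac{d}{dx}\bigl(2\ln m(x)\bigr)=\ln\frac{x^{2}(x+3)}{(x-1)(x+2)^{2}}-\frac{2}{x}+\frac{4}{x+1}-\frac{2}{x+2}.
\]
Differentiating once more, the logarithmic part contributes $\tfrac{2}{x}-\tfrac{1}{x-1}+\tfrac{1}{x+3}-\tfrac{2}{x+2}$, which on pairing the first two and the last two terms collapses to $-\tfrac{12}{x(x-1)(x+2)(x+3)}$, while the rational part equals $\tfrac{12x^{2}+24x+8}{x^{2}(x+1)^{2}(x+2)^{2}}$. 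Clearing the (positive, for $x\geq2$) denominators, the inequality $f'(x)<0$ reduces to $(12x^{2}+24x+8)(x-1)(x+3)<12x(x+1)^{2}(x+2)$, and expanding both sides the difference of the right and left members is $40x^{2}+80x+24>0$. Hence $f$ is strictly decreasing on $[2,+\infty)$; since the argument of the logarithm in $f$ tends to $1$ and the rational terms tend to $0$, we have $f(x)\to0$, so $f(x)>0$ on $[2,+\infty)$, and therefore $2\ln m$, and with it $m$, is strictly increasing.

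For the limit I would regroup, using $2x-2=(x-1)+(x-1)$ and $2x+6=(x-1)+(x+3)+4$,
\[
m(x)^{2}=\left(\frac{x}{x-1}\right)^{x-1}\left(\frac{x}{x+2}\right)^{x-1}\left(\frac{x+3}{x+2}\right)^{x+3}\left(\frac{x+1}{x+2}\right)^{4},
\]
whose four factors tend respectively to $e$, $e^{-2}$, $e$ and $1$; hence $m(x)^{2}\to1$, and since $m>0$ we get $m(x)\to1$. Being increasing with limit $1$, $m(x)<1$ for every $x\in[2,+\infty)$.

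The only mildly delicate point is the simplification of $f'$, where a handful of rational terms must be combined; but, exactly as in Lemma~\ref{h}, they collapse cleanly into two fractions whose comparison is the positivity of a quadratic, so no genuine obstacle arises — the whole argument runs parallel to the proof of Lemma~\ref{h}.
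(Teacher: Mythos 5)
Your proposal is correct and follows exactly the route the paper intends: the paper omits the proof of Lemma \ref{m}, stating it is proved similarly to Lemma \ref{h}, and your argument is precisely that scheme (logarithmic differentiation, showing $\frac{d}{dx}\ln m$ is decreasing with limit $0$ hence positive, then computing $\lim_{x\to+\infty} m(x)=1$ via the factorization into $e$, $e^{-2}$, $e$, $1$). Your key computations --- the collapse of the logarithmic contribution to $-\tfrac{12}{x(x-1)(x+2)(x+3)}$, the numerator $12x^2+24x+8$, and the difference $40x^2+80x+24>0$ --- all check out.
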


\begin{proof}[Proof of Lemma \ref{monotonicity and bound of ratio}]

Let \begin{align*}
d(n) = \frac{\text{Area}(T_{1,n-1})}{\text{Area}(S^n)} = \frac{\text{Area}(S^1(\sqrt{\frac{1}{n}}) \times S^{n-1} (\sqrt{\frac{n-1}{n}}))}{\text{Area}(S^n)}   
\end{align*}

We have the following equality regarding the surface area of the $n-$sphere and the $(n+2)-$sphere:
\begin{equation} \label{+2 ratio}
\frac{\text{Area}(S^{n+2})}{\text{Area}(S^n)} = \frac{2\pi}{n+1}.
\end{equation}

Then we know that:
\begin{equation}\label{dn ratio}
\begin{split}
\frac{d(n+2)}{d(n)} &= \frac{\text{Area}(S^1(\sqrt{\frac{1}{n+2}}))}{\text{Area}(S^1(\sqrt{\frac{1}{n}}))} \cdot \frac{\text{Area}( S^{n+1} (\sqrt{\frac{n+1}{n+2}}))}{\text{Area}( S^{n-1} (\sqrt{\frac{n-1}{n}}))} \cdot \frac{\text{Area}(S^n)}{\text{Area}(S^{n+2})} \\
&= \sqrt{\frac{n}{n+2}} \cdot \sqrt{\frac{(n+1)^{n+1}n^{n-1}}{(n+2)^{n+1}(n-1)^{n-1}}} \cdot \frac{2\pi}{n} \cdot \frac{n+1}{2\pi} \\
&= \sqrt{\frac{(n+1)^{n+3}n^{n-2}}{(n+2)^{n+2}(n-1)^{n-1}}} \\
&= h(n) < 1,
\end{split}
\end{equation}
where we use Lemma \ref{h} at the last step. By computation, $d(2) = \frac{\pi}{2}, d(3) = \frac{8}{3\sqrt{3}} < \frac{\pi}{2}$. Therefore $d(n) \leq \frac{\pi}{2}$ for all $n \geq 2$.

Using the equation \eqref{dn ratio} and Lemma \ref{m}, we know
\begin{equation} \label{increasing ratio}
\frac{d(n+1)}{d(n)} = \frac{d(n+3)}{d(n+2)} \sqrt{\frac{n^{2n-2}(n+1)^4 (n+3)^{n+3}}{(n-1)^{n-1}(n + 2)^{2n + 6}}} = \frac{d(n+3)}{d(n+2)} m(n) < \frac{d(n+3)}{d(n+2)},
\end{equation}
for all $n \geq 2$.

We know $d(n) = 2\pi \sqrt{\frac{(n-1)^{n-1}}{n^n}} \frac{\text{Area}(S^{n-1})}{\text{Area}(S^n)}$. Recall the surface area formula:
\begin{equation} \label{surface area formula}
\text{Area}(S^n) = \left\{
\begin{aligned}
(n+1)\pi^{\frac{n+1}{2}}/(\frac{n+1}{2})! &    & \rm{n \ is \  odd}, \\
(n+1) \pi^{\frac{n}{2}}2^{\frac{n+2}{2}}/(n+1)!!  &     & \rm{n \ is \ even}.
\end{aligned}
\right.
\end{equation}

We can compute that:
$$ \frac{d(n+1)}{d(n)} = \left\{
\begin{aligned}
\sqrt{\frac{n^{2n}}{(n+1)^{n+1}(n-1)^{n-1}}} \cdot \frac{\pi (n+1)^2}{2n} (\frac{n \cdot (n-2) \cdots 1}{(n+1) \cdot (n-1) \cdots 2})^2,&    & \rm{n \ is \  odd}, \\
\sqrt{\frac{n^{2n}}{(n+1)^{n+1}(n-1)^{n-1}}} \frac{2 (n+1)^2}{\pi n} (\frac{n \cdot (n-2) \cdots 2}{(n+1) \cdot (n-1) \cdots 1})^2,&     & \rm{n \ is \ even}.
\end{aligned}
\right.
$$

By Stirling's approximation, $n! \sim \sqrt{2\pi n} (\frac{n}{e})^n$, one can verify that $\frac{d(n+1)}{d(n)}$ converges to $1$ as $n \to +\infty$. When combined with the monotonicity described in \eqref{increasing ratio}, we can deduce that $\frac{d(n+1)}{d(n)} < 1$ for all $n \geq 2$. Hence $\{d(n)\}_{n \geq 2}$ is a strictly decreasing sequence.

We are able to bound $\frac{\text{Area}(T_{1,n-1})}{\text{Area}(S^n)}$ from its limit. By computation:
\begin{align*}
\lim\limits_{n \to + \infty} d(n) d(n+1) &= \lim\limits_{n \to + \infty}  (2\pi)^2 \cdot \sqrt{\frac{(n-1)^{n-1}}{(n+1)^{n+1}}} \cdot \frac{\text{Area}(S^{n-1})}{\text{Area}(S^{n+1})}\\
&= \lim\limits_{n \to + \infty}  (2\pi)^2 \cdot \sqrt{(1 - \frac{2}{n+1})^{n-1}} \cdot \frac{1}{n+1} \cdot \frac{n}{2\pi} \\
&= \frac{2\pi}{e}.
\end{align*}

Hence $\lim\limits_{n \to +\infty} d(n) = \sqrt{\frac{2\pi}{e}}$. We conclude the lower bound from the monotonicity.

\end{proof}

\section{Clifford hypersurface with lowest area}

Let $T_{min}^n$ denote the Clifford hypersurface in $S^{n+1}$ with lowest area among all Clifford hypersurfaces $T_{p,q}$ ($p+q =n$) in $S^{n+1}$. We give the explicit expression of $T_{min}^n$:

\begin{prop} \label{min}
When $n$ is even, $T_{min}^n = T_{\frac{n}{2}, \frac{n}{2}} = S^{\frac{n}{2}}(\sqrt{\frac{1}{2}}) \times S^{\frac{n}{2}}(\sqrt{\frac{1}{2}})$.

When $n$ is odd, $T_{min}^n = T_{\frac{n-1}{2}, \frac{n+1}{2}} = S^{\frac{n-1}{2}}(\sqrt{\frac{n-1}{2n}}) \times S^{\frac{n+1}{2}}(\sqrt{\frac{n+1}{2n}})$.
\end{prop}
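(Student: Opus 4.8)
\textit{Proof proposal.} The plan is to reduce everything to the strict convexity of a single one–variable function. Using $\text{Area}(S^k(r))=r^k\,\text{Area}(S^k)$ together with $p+q=n$, one has
\[
\text{Area}(T_{p,q})\;=\;\Big(\tfrac pn\Big)^{p/2}\text{Area}(S^p)\cdot\Big(\tfrac qn\Big)^{q/2}\text{Area}(S^q)\;=\;n^{-n/2}\,\phi(p)\,\phi(q),\qquad \phi(k):=k^{k/2}\,\text{Area}(S^k).
\]
Thus minimizing $\text{Area}(T_{p,q})$ over integers $1\le p\le n-1$ with $q=n-p$ is the same as minimizing $G(p):=\log\phi(p)+\log\phi(n-p)$. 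First I would show that $\log\phi$ is strictly convex on $(0,\infty)$ (as a function of a real variable $k$). Granting this, $G$ is strictly convex on $[1,n-1]$ and symmetric under $p\mapsto n-p$, hence a strictly increasing function of $|p-\tfrac n2|$; its minimum over the integers $1,\dots,n-1$ is therefore attained exactly at the integer(s) nearest $n/2$, namely the single point $p=n/2$ when $n$ is even, and the pair $p\in\{\tfrac{n-1}2,\tfrac{n+1}2\}$ when $n$ is odd (these two give the same hypersurface, since $S^p(a)\times S^q(b)\cong S^q(b)\times S^p(a)$). Substituting these values into the definition of $T_{p,q}$ yields the stated radii.

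The analytic heart is the strict convexity of $\log\phi$. Writing $\text{Area}(S^k)=2\pi^{(k+1)/2}/\Gamma(\tfrac{k+1}2)$ gives
\[
\log\phi(k)\;=\;\tfrac k2\log k\;+\;\log 2\;+\;\tfrac{k+1}2\log\pi\;-\;\log\Gamma\!\big(\tfrac{k+1}2\big),
\]
and differentiating twice in $k$ (the terms linear in $k$ drop out), with $\psi_1:=(\log\Gamma)''$ the trigamma function,
\[
\frac{d^2}{dk^2}\log\phi(k)\;=\;\frac1{2k}\;-\;\frac14\,\psi_1\!\big(\tfrac{k+1}2\big).
\]
So I need $\psi_1\!\big(\tfrac{k+1}2\big)<\tfrac2k$ for $k>0$, equivalently $\psi_1(x)<\tfrac1{x-1/2}$ for $x>\tfrac12$. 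This follows from a telescoping estimate: since $(x+m)^2>(x+m-\tfrac12)(x+m+\tfrac12)$,
\[
\psi_1(x)\;=\;\sum_{m=0}^{\infty}\frac1{(x+m)^2}\;<\;\sum_{m=0}^{\infty}\Big(\frac1{x+m-\tfrac12}-\frac1{x+m+\tfrac12}\Big)\;=\;\frac1{x-\tfrac12}.
\]
Plugging $x=\tfrac{k+1}2$ back in yields $\frac{d^2}{dk^2}\log\phi>0$ on $(0,\infty)$, which is what was needed.

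I expect the only real subtlety to be that the trigamma estimate above is essentially sharp: $\psi_1(x)$ and $\tfrac1{x-1/2}$ agree through order $x^{-2}$ as $x\to\infty$, so a cruder bound such as $\psi_1(x)<\tfrac1x+\tfrac1{x^2}$ is not strong enough and the argument would not close — the telescoping identity is exactly what makes it work. A second point to handle carefully is that moving one step "toward balance" strictly lowers the area only when the two factor dimensions differ by at least $2$; this is precisely why, for odd $n$, one is left with the pair $\{\tfrac{n-1}2,\tfrac{n+1}2\}$ rather than a single split, and why the statement records just the one representative $T_{\frac{n-1}{2},\frac{n+1}{2}}$.
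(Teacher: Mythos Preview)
Your argument is correct and cleaner than the paper's. The paper proceeds more combinatorially: it first shows $\text{Area}(T_{k,l})<\text{Area}(T_{k-2,l+2})$ for $3\le k\le l+1$ by computing the ratio as $\sqrt{p(l)/p(k-2)}$ with $p(x)=(x+2)^{x+2}/(x^x(x+1)^2)$ and checking $p$ is increasing; then it separately shows $\text{Area}(T_{k,k})<\text{Area}(T_{k-1,k+1})$ by writing the ratio $c_k$, observing $c_{k-1}c_k=q(k)$ for another explicit decreasing function $q$, deducing $c_{k+2}/c_k<1$, and using Stirling to pin the limit of $c_k$ at $1$; finally it chains these two families of inequalities to reach the balanced split. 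Your route replaces all of this by one stroke: the factorization $\text{Area}(T_{p,q})=n^{-n/2}\phi(p)\phi(n-p)$ and the strict convexity of $\log\phi$ via the sharp trigamma bound $\psi_1(x)<1/(x-\tfrac12)$ immediately give that $G(p)=\log\phi(p)+\log\phi(n-p)$ is strictly convex and symmetric, hence minimized at the integer(s) nearest $n/2$. The paper's approach has the virtue of being entirely elementary (no special functions beyond Stirling), while yours is more conceptual, handles the even and odd cases uniformly, and makes transparent why the balanced split wins. Your remark that the telescoping estimate is essentially sharp is on point: the cruder bound $\psi_1(x)<1/x+1/x^2$ would indeed fail here.
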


We need the following four lemmas to prove this proposition. We omit the proof for the first two lemmas as they can be proved similarly to Lemma \ref{h}.

\begin{lem} \label{p}
Let $p(x) = \frac{(x+2)^{x+2}}{x^x (x+1)^2}$ be the function defined on $[1,+\infty)$, then $p(x)$ is an increasing function.
\end{lem}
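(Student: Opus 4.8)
The plan is to show that $\ln p$ is strictly increasing on $[1,+\infty)$, in the same spirit as the proof of Lemma~\ref{h}. Since $p>0$ we may write $\ln p(x) = (x+2)\ln(x+2) - x\ln x - 2\ln(x+1)$, and differentiating gives
\[
\frac{d}{dx}\ln p(x) = \ln\frac{x+2}{x} - \frac{2}{x+1}.
\]
Thus it suffices to prove the inequality $\ln\frac{x+2}{x} > \frac{2}{x+1}$ for all $x \geq 1$.

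The cleanest route I would take is the substitution $t = \frac{1}{x+1}$, which ranges over $(0,1/2]$ as $x$ ranges over $[1,+\infty)$; under it $\frac{x+2}{x} = \frac{1+t}{1-t}$ and $\frac{2}{x+1} = 2t$, so the desired inequality becomes $\ln\frac{1+t}{1-t} > 2t$ for $t \in (0,1/2]$. This is immediate from the classical expansion $\ln\frac{1+t}{1-t} = 2\bigl(t + \tfrac{t^{3}}{3} + \tfrac{t^{5}}{5} + \cdots\bigr)$, valid for $|t|<1$, all of whose terms beyond the first are positive. Alternatively, one can argue exactly as in Lemma~\ref{h}: setting $g(x) = \ln\frac{x+2}{x} - \frac{2}{x+1}$, a short computation gives
\[
g'(x) = \frac{1}{x+2} - \frac{1}{x} + \frac{2}{(x+1)^{2}} = \frac{2\bigl((x^{2}+2x)-(x+1)^{2}\bigr)}{x(x+2)(x+1)^{2}} = \frac{-2}{x(x+2)(x+1)^{2}} < 0,
\]
so $g$ is strictly decreasing, while $g(x)\to 0$ as $x\to+\infty$ (each term equals $\tfrac{2}{x} - \tfrac{2}{x^{2}} + O(x^{-3})$); hence $g>0$ on $[1,+\infty)$. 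Either way $\frac{d}{dx}\ln p(x) > 0$ there, so $\ln p$, and therefore $p$, is strictly increasing.

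I do not expect any real obstacle here: this is a routine one-variable estimate of exactly the shape of Lemma~\ref{h}. The only point that deserves a moment's care is the algebraic simplification --- the cancellation $(x^{2}+2x)-(x+1)^{2} = -1$ (equivalently, the reduction to the $\frac{1+t}{1-t}$ normal form) --- which is what makes the sign transparent and avoids any appeal to numerical values.
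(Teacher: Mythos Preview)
Your proof is correct and matches exactly what the paper has in mind: the paper omits the proof of this lemma, stating only that it ``can be proved similarly to Lemma~\ref{h}'', and your second argument (showing $g'(x)<0$ via the identity $(x^{2}+2x)-(x+1)^{2}=-1$ and then using $g(x)\to 0$) is precisely that template. The substitution $t=\tfrac{1}{x+1}$ reducing to $\ln\frac{1+t}{1-t}>2t$ is a nice alternative packaging of the same inequality.
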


\begin{lem} \label{q}
Let $q(x) = \sqrt{\frac{(x-2)^{x-2} (x+1)^{x+1}}{(x-1)^{x-3} x^{x+2}}}$ be the function defined on $[3,+\infty)$, then $q(x)$ is a decreasing function with limit $1$ at $+\infty$.
\end{lem}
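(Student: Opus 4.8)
The plan is to follow, essentially verbatim, the scheme used for Lemma~\ref{h} (the paper already remarks that the first two lemmas can be proved this way): check positivity, control the logarithmic derivative to obtain strict monotonicity, compute the limit at $+\infty$, and then read off the bound.

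First I would note that $q(x)>0$ on $[3,+\infty)$, since every base occurring in it is positive there, so it suffices to study $\ln q$. Differentiating
\[
\ln q(x)=\tfrac12\big[(x-2)\ln(x-2)+(x+1)\ln(x+1)-(x-3)\ln(x-1)-(x+2)\ln x\big],
\]
the non-logarithmic terms collapse to $2-\tfrac{x-3}{x-1}-\tfrac{x+2}{x}=\tfrac{2}{x(x-1)}$, so
\[
\frac{d}{dx}\ln q(x)=\tfrac12\Big[\ln\frac{(x-2)(x+1)}{(x-1)x}+\frac{2}{x(x-1)}\Big].
\]
Since $\tfrac{(x-2)(x+1)}{(x-1)x}=1-\tfrac{2}{x(x-1)}$, writing $t=t(x):=\tfrac{2}{x(x-1)}$ (which satisfies $0<t\le\tfrac13$ for all $x\ge3$) this equals $\tfrac12\big(\ln(1-t)+t\big)<0$, because $\ln(1-t)\le -t$ with equality only at $t=0$. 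Hence $\ln q$, and therefore $q$, is strictly decreasing on $[3,+\infty)$. (One could instead imitate Lemma~\ref{h} more literally, computing $\tfrac{d^2}{dx^2}\ln q$, checking it keeps a fixed sign, and using that $\tfrac{d}{dx}\ln q\to0$ as $x\to\infty$.)

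For the limit, I would regroup the radicand as
\[
q(x)^2=\Big(\frac{x-2}{x-1}\Big)^{x-2}\Big(\frac{x+1}{x}\Big)^{x+1}\cdot\frac{x-1}{x},
\]
whose three factors tend to $e^{-1}$, $e$ and $1$ as $x\to\infty$; hence $q(x)^2\to1$, so $q(x)\to1$. Combined with strict monotonicity this yields $q(x)>1$ for every $x\in[3,+\infty)$ — this is the direction actually needed (a decreasing function with limit $1$ lies above $1$; the inequality written ``$q(x)<1$'' in the statement should read $q(x)>1$).

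I do not anticipate any genuine obstacle: as the paper says for the analogous lemmas, everything here is a direct computation. The only mildly delicate points are the algebraic simplification turning $\tfrac{d}{dx}\ln q$ into the manifestly negative form $\tfrac12(\ln(1-t)+t)$, and checking that $t(x)=\tfrac{2}{x(x-1)}<1$ throughout $[3,+\infty)$ (it equals $\tfrac13$ at $x=3$ and decreases), which is exactly what makes the elementary estimate $\ln(1-t)\le -t$ applicable.
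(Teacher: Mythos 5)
Your proof is correct and follows essentially the same route the paper intends (the Lemma~\ref{h} template: logarithmic differentiation, where the non-logarithmic terms collapse to $\tfrac{2}{x(x-1)}$ so that $\tfrac{d}{dx}\ln q=\tfrac12(\ln(1-t)+t)<0$ with $t=\tfrac{2}{x(x-1)}$, plus a direct computation of the limit). One small note: the statement of Lemma~\ref{q} contains no inequality ``$q(x)<1$'' to correct — that clause appears only in Lemmas~\ref{h} and~\ref{m} — and your conclusion $q(x)>1$ is indeed what is consistent with how $q$ is used later.
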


\begin{lem} \label{+2 ineq}
$\text{Area}(T_{k,l}) < \text{Area}(T_{k-2,l+2})$ for $3 \leq k \leq l+1$.
\end{lem}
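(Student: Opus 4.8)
The plan is to compute the ratio $\text{Area}(T_{k-2,l+2})/\text{Area}(T_{k,l})$ explicitly and recognize its square as the quotient $p(l)/p(k-2)$ of values of the function $p$ from Lemma \ref{p}; the inequality then follows from the monotonicity of $p$ together with the hypotheses $3 \le k \le l+1$.

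First I would record that, since $T_{p,q}=S^p(\sqrt{p/n})\times S^q(\sqrt{q/n})$ with $n=p+q$ and $\text{Area}(S^p(r))=r^p\,\text{Area}(S^p)$, one has $\text{Area}(T_{p,q})=n^{-n/2}\,p^{p/2}q^{q/2}\,\text{Area}(S^p)\,\text{Area}(S^q)$. Both $T_{k,l}$ and $T_{k-2,l+2}$ lie in the same $S^{n+1}$ with $n=k+l=(k-2)+(l+2)$, so the factor $n^{-n/2}$ cancels in the ratio, leaving
\[
\frac{\text{Area}(T_{k-2,l+2})}{\text{Area}(T_{k,l})}=\frac{(k-2)^{(k-2)/2}(l+2)^{(l+2)/2}}{k^{k/2}l^{l/2}}\cdot\frac{\text{Area}(S^{k-2})\,\text{Area}(S^{l+2})}{\text{Area}(S^k)\,\text{Area}(S^l)}.
\]

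Next I would evaluate the sphere-area factor using \eqref{+2 ratio}, which gives $\text{Area}(S^{k-2})/\text{Area}(S^k)=(k-1)/(2\pi)$ and $\text{Area}(S^{l+2})/\text{Area}(S^l)=2\pi/(l+1)$, so that factor equals exactly $(k-1)/(l+1)$. Squaring the full expression and regrouping the $k$-terms and the $l$-terms separately yields
\[
\left(\frac{\text{Area}(T_{k-2,l+2})}{\text{Area}(T_{k,l})}\right)^2=\frac{(k-2)^{k-2}(k-1)^2}{k^k}\cdot\frac{(l+2)^{l+2}}{l^l(l+1)^2}=\frac{p(l)}{p(k-2)},
\]
since the second factor is $p(l)$ by definition, and the first is $1/p(k-2)$ because $p(k-2)=k^k/\big((k-2)^{k-2}(k-1)^2\big)$.

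Finally, the hypotheses $3\le k\le l+1$ force $k-2\ge 1$ and $l\ge k-1>k-2$, so both arguments lie in $[1,\infty)$, and since $p$ is strictly increasing there by Lemma \ref{p} we conclude $p(l)>p(k-2)$; hence the ratio exceeds $1$, which is the assertion. I do not anticipate a genuine difficulty: the computation is elementary, and the only points needing care are the bookkeeping of the exponents in the regrouping that produces $p(l)/p(k-2)$, and the verification that the evaluation points $l$ and $k-2$ stay in the domain $[1,\infty)$ of Lemma \ref{p}.
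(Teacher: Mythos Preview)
Your proposal is correct and follows essentially the same approach as the paper: both compute the area ratio using \eqref{+2 ratio}, rewrite it as $\sqrt{p(l)/p(k-2)}$ (equivalently, you square to get $p(l)/p(k-2)$), and then invoke the monotonicity of $p$ from Lemma~\ref{p} together with $k-2<l$. The paper's write-up is slightly terser but the argument is identical.
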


\begin{proof}

By \eqref{+2 ratio}, we know that 
\begin{align*}
\frac{\text{Area}(T_{k-2,l+2})}{\text{Area}(T_{k,l})} &= \frac{\text{Area}(S^{k-2}(\sqrt{\frac{k-2}{k+l}}) \times S^{l+2}(\sqrt{\frac{l+2}{k+l}})))}{\text{Area}(S^{k}(\sqrt{\frac{k}{k+l}}) \times S^{l}(\sqrt{\frac{l}{k+l}})))} \\
&= \sqrt{\frac{(k-2)^{k-2} (l+2)^{l+2}}{k^k l^l}} \cdot \frac{2\pi}{l+1} \cdot \frac{k-1}{2\pi} \\
&= \sqrt{\frac{p(l)}{p(k-2)}} >1.
\end{align*}
(by lemma \ref{p}, $p(x)$ is an increasing function, and $k-2 \leq l-1 < l$.)

\end{proof}

\begin{lem} \label{+1 ineq}
$\text{Area}(T_{k,k}) < \text{Area}(T_{k-1,k+1})$ for $k \geq 2$.
\end{lem}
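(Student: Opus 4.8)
The plan is to turn the inequality into a monotonicity statement and then invoke Lemma \ref{m}. Write $n=2k$ for the dimension of the tori involved, and recall that rescaling a round sphere $S^{p}$ by $r$ multiplies its area by $r^{p}$, so
\[
\text{Area}(T_{k,k}) = 2^{-k}\,\text{Area}(S^{k})^{2},\qquad
\text{Area}(T_{k-1,k+1}) = \Big(\tfrac{k-1}{2k}\Big)^{\frac{k-1}{2}}\Big(\tfrac{k+1}{2k}\Big)^{\frac{k+1}{2}}\text{Area}(S^{k-1})\,\text{Area}(S^{k+1}).
\]
Setting $R(k):=\text{Area}(T_{k-1,k+1})/\text{Area}(T_{k,k})$, the powers of $2$ and $k$ collapse and
\[
R(k)=\frac{(k-1)^{\frac{k-1}{2}}(k+1)^{\frac{k+1}{2}}}{k^{k}}\cdot\frac{\text{Area}(S^{k-1})\,\text{Area}(S^{k+1})}{\text{Area}(S^{k})^{2}},
\]
so the lemma is precisely the statement that $R(k)>1$ for all $k\ge 2$.

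The main step is to compute the ratio $R(k+2)/R(k)$. The algebraic prefactors of $R$ contribute an explicit power expression, while in the quotient of the sphere-area factors one uses the elementary identity \eqref{+2 ratio} repeatedly to re-express every $\text{Area}(S^{\bullet})$ through $\text{Area}(S^{k-1})$ and $\text{Area}(S^{k})$; these then cancel, and squaring what is left gives exactly
\[
\left(\frac{R(k+2)}{R(k)}\right)^{2}=\frac{k^{2k-2}(k+1)^{4}(k+3)^{k+3}}{(k-1)^{k-1}(k+2)^{2k+6}}=m(k)^{2}.
\]
Hence $R(k+2)/R(k)=m(k)<1$ by Lemma \ref{m}, so each parity subsequence $\{R(2m)\}_{m\ge1}$ and $\{R(2m+1)\}_{m\ge1}$ is strictly decreasing.

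Finally I would show $R(k)\to 1$ as $k\to\infty$. The prefactor equals $\big(1+\tfrac1k\big)\big(1-\tfrac1{k^{2}}\big)^{\frac{k-1}{2}}\to 1$, and by \eqref{+2 ratio} the second factor is $\tfrac{2\pi}{k}\big(\text{Area}(S^{k-1})/\text{Area}(S^{k})\big)^{2}$, which tends to $1$ because Stirling's approximation applied to \eqref{surface area formula} — as in the proof of Lemma \ref{monotonicity and bound of ratio} — gives $\text{Area}(S^{k-1})/\text{Area}(S^{k})\sim\sqrt{k/(2\pi)}$. A strictly decreasing sequence with limit $1$ stays above $1$; applying this to the even- and odd-indexed subsequences gives $R(k)>1$ for all $k\ge 2$, as required. (As a consistency check, $R(2)=3\sqrt{3}\,\pi/16>1$ and $R(3)=256/(81\pi)>1$.)

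The delicate feature here is that $R(k)\downarrow 1$: the inequality has no asymptotic room, and a naive estimate only produces $R(k)=1+o(1)$ with the leading corrections on the two sides cancelling. What makes the argument go through is the exact identity $R(k+2)/R(k)=m(k)$ — Lemma \ref{m} provides exactly the monotonicity needed — after which the limit together with monotonicity forces $R(k)>1$ without any sharp estimate.
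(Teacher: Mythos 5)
Your proof is correct, and the computations check out: with $R(k)=\mathrm{Area}(T_{k-1,k+1})/\mathrm{Area}(T_{k,k})$ one indeed has $R(k)=\frac{(k-1)^{(k-1)/2}(k+1)^{(k+1)/2}}{k^k}\cdot\frac{\mathrm{Area}(S^{k-1})\,\mathrm{Area}(S^{k+1})}{\mathrm{Area}(S^k)^2}$, and repeated use of \eqref{+2 ratio} collapses the two-step ratio to exactly $R(k+2)/R(k)=m(k)$, the function of Lemma \ref{m}; your spot checks $R(2)=3\sqrt{3}\,\pi/16>1$ and $R(3)=256/(81\pi)>1$ are also right. The overall skeleton is the same as the paper's: show the even- and odd-indexed subsequences of the ratio are strictly decreasing, show the ratio tends to $1$ (via Stirling), and conclude it stays above $1$. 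Where you genuinely differ is the mechanism for the monotonicity step. The paper writes $c_k=R(k)$ in closed form using the double-factorial formula \eqref{surface area formula}, observes the product identity $c_{k-1}c_k=q(k)$, and deduces $c_{k+2}/c_k=q(k+2)/q(k+1)<1$ from the monotone decrease of $q$ (Lemma \ref{q}); you instead obtain $R(k+2)/R(k)=m(k)<1$ in one shot from Lemma \ref{m}, which is already proved for Lemma \ref{monotonicity and bound of ratio}, so your route dispenses with Lemma \ref{q} and with the even/odd case analysis in the closed form of $c_k$ (Stirling is applied only to $\mathrm{Area}(S^{k-1})/\mathrm{Area}(S^k)$), at the cost of carrying out the direct computation of the two-step ratio. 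Both arguments are complete and of the same rigor; your observation that the same function $m$ governs this monotonicity is a small economy over the paper's appendix.
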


\begin{proof}
By the surface area formula \eqref{surface area formula}, we can compute that 
$$ \frac{\text{Area}(T_{k-1,k+1})}{\text{Area}(T_{k,k})} = \left\{
\begin{aligned}
(\frac{(k+1) \cdot (k-1) \cdots 2}{k \cdot (k-2) \cdots 1})^2 \cdot \frac{2}{\pi} \sqrt{\frac{(k-1)^{k-1}(k+1)^{k-3}}{k^{2k-2}}} &    & \rm{k \ is \  odd}, \\
(\frac{(k+1) \cdot (k-1) \cdots 1}{k \cdot (k-2) \cdots 2})^2 \cdot \frac{\pi}{2} \sqrt{\frac{(k-1)^{k-1}(k+1)^{k-3}}{k^{2k-2}}} &     & \rm{k \ is \ even}.
\end{aligned}
\right.
$$

Let $c_k = \frac{\text{Area}(T_{k-1,k+1})}{\text{Area}(T_{k,k})}$, then one can check that $c_{k-1} c_{k} = q(k)$.

By Stirling's approximation, $n! \sim \sqrt{2\pi n} (\frac{n}{e})^n$, one can check that $\{c_n\}$ converges to $1$ as $n \to +\infty$.

By lemma \ref{q}, $q(x)$ is a decreasing function, thus we have the following inequality:
\begin{align*}
\frac{c_{k+2}}{c_k} = \frac{c_{k+2} c_{k+1}}{c_{k+1} c_k} = \frac{q(k+2)}{q(k+1)} < 1.
\end{align*}

Therefore $\{c_2, c_4, c_6, \cdots\}$, $\{c_3, c_5, c_7, \cdots\}$ are two decreasing sequences. Since $\{c_k\}$ converges to $1$ as $n \to +\infty$, thus we claim that $c_k > 1$ for all $k \geq 2$.

\end{proof}

\begin{proof}[Proof of Proposition\ref{min}]

We can easily see that $\text{Area}(T_{k,l}) = Area(T_{l,k})$.

When $n$ is even, $n = 2k$ for some positive integer $k \geq 2$.

By lemma \ref{+2 ineq}, we know that
\begin{align*}
\text{Area}(T_{k,k}) < \text{Area}(T_{k-2,k+2}) < \text{Area}(T_{k-4,k+4}) < \cdots \\
\text{Area}(T_{k-1,k+1}) < \text{Area}(T_{k-3,k+3}) < \text{Area}(T_{k-5,k+5}) < \cdots
\end{align*}

By lemma \ref{+1 ineq}, $\text{Area}(T_{k,k}) < \text{Area}(T_{k-1,k+1})$. Therefore $T_{min}^n = T_{k,k}$.

When $n$ is odd, $n = 2k+1$ for some positive integer $k$.

By lemma \ref{+2 ineq}, we know that
\begin{align*}
\text{Area}(T_{k,k+1}) &< \text{Area}(T_{k-2,k+3}) < \text{Area}(T_{k-4,k+5}) < \cdots \\
\text{Area}(T_{k-1,k+2}) &< \text{Area}(T_{k-3,k+4}) < \text{Area}(T_{k-5,k+6}) < \cdots \\
\text{Area}(T_{k,k+1}) &= \text{Area}(T_{k+1,k}) < \text{Area}(T_{k-1,k+2}).
\end{align*}

Therefore $T_{min}^n = T_{k,k+1}$.

\end{proof}

\section{Monotonicity of the ratio} \label{mono}

In this section, We prove that the ratio between the area of the minimal Clifford hypersurface and the area of the sphere is strictly decreasing. We expect to use this monotonicity formula and the standard dimension reduction argument to classify low area minimal hypersurface in $S^{n+1}$.

\begin{thm} \label{decreasing}
The sequence $\{ \frac{\text{Area}(T_{min}^n)}{\text{Area}(S^n)} \}_{n \geq 2}$ is a strictly decreasing sequence with limit $\sqrt{2}$ as $n \to +\infty$.
\end{thm}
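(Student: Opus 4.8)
The plan is to follow the strategy of the proof of Lemma \ref{monotonicity and bound of ratio}. Set
\[
D(n):=\frac{\mathrm{Area}(T_{min}^n)}{\mathrm{Area}(S^n)},
\]
which by Proposition \ref{min} equals $\mathrm{Area}(T_{n/2,\,n/2})/\mathrm{Area}(S^n)$ when $n$ is even and $\mathrm{Area}(T_{(n-1)/2,\,(n+1)/2})/\mathrm{Area}(S^n)$ when $n$ is odd. Using $\mathrm{Area}(S^p(r))=r^p\,\mathrm{Area}(S^p)$ together with the surface-area formula \eqref{surface area formula}, one writes $D(n)$ --- in the even and odd cases separately --- as an explicit ratio of factorials and double factorials; for example, along the subsequence $n=4m$ a short computation using $\Gamma(m+\tfrac12)=\tfrac{(2m)!}{4^m m!}\sqrt\pi$ gives $D(4m)=\tfrac{2}{4^m}\cdot\tfrac{(4m)!\,(m!)^2}{((2m)!)^3}$.

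First I would establish the strict monotonicity $D(n)>D(n+1)$ for all $n\ge 2$, treating the even-to-odd step ($n=2k$, where $T_{min}$ passes from $T_{k,k}$ to $T_{k,k+1}$) and the odd-to-even step ($n=2k+1$, where $T_{min}$ passes from $T_{k,k+1}$ to $T_{k+1,k+1}$) separately. In each case, substituting the closed forms above together with \eqref{+2 ratio}, the quotient $D(n)/D(n+1)$ becomes an explicit function of $k$; exactly as in Lemmas \ref{h}, \ref{m}, \ref{p}, \ref{q}, I would check that it has monotone logarithmic derivative and tends to $1$ as $k\to\infty$ (the limit by Stirling's approximation $n!\sim\sqrt{2\pi n}(n/e)^n$), which pins down the sign of $D(n)/D(n+1)-1$ for every $k$; a check of the first few terms (for instance $D(2)=\tfrac\pi2>D(3)=\tfrac{8}{3\sqrt3}>D(4)=\tfrac32$) then yields $D(n)>D(n+1)$. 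Equivalently, one may first show that the skip-by-two quotient $D(n+2)/D(n)$ is an explicit function $<1$, so that the even and odd subsequences each strictly decrease, and then interleave them by the same monotonicity-plus-limit argument applied to one mixed-parity step.

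For the limiting value I would compute $\lim_{m\to\infty}D(4m)$ from the closed form $D(4m)=\tfrac{2}{4^m}\cdot\tfrac{(4m)!\,(m!)^2}{((2m)!)^3}$: by Stirling the factor $\tfrac{(4m)!\,(m!)^2}{((2m)!)^3}$ is asymptotic to $2^{-1/2}4^m$ (the powers of $2$ and $\pi$ all cancel), so $D(4m)\to\sqrt2$. Since $\{D(n)\}_{n\ge2}$ has just been shown to be strictly decreasing and has a subsequence converging to $\sqrt2$, the whole sequence converges to $\sqrt2$, which completes the proof.

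The part I expect to be the main obstacle is purely bookkeeping: there is no single closed form for $D(n)$ valid for all $n$, because both the surface-area formula \eqref{surface area formula} and the identification of $T_{min}^n$ in Proposition \ref{min} split according to the parity of $n$ --- and the quotients of sphere areas that arise split further according to the parity of $k$ --- so the monotonicity must be verified in several parallel cases, with somewhat unwieldy auxiliary functions. No idea beyond those already used in Lemmas \ref{h}--\ref{q} is required; the calculation is simply longer.
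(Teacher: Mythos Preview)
Your proposal is correct and follows essentially the same route as the paper: split according to $n\bmod 4$, show the successive ratios $b_n=D(n+1)/D(n)$ are monotone along each residue class via auxiliary functions in the style of Lemmas \ref{h}--\ref{q} (the paper's Lemmas \ref{u} and \ref{v}), verify $b_n\to 1$, and conclude $b_n<1$. The only notable difference is that the paper computes the exact limits $\lim a_{i+4j}=\sqrt{2}$ (for $i=2,3,4,5$) via Euler's product formula for $\sin(\pi z)$ at $z=\tfrac12,\tfrac14$ rather than Stirling, which gives the four subsequential limits cleanly and avoids any circularity in deducing the full limit from a single subsequence.
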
 

We omit the proof for the next two lemmas as they can be proved similarly to Lemma \ref{h}.

\begin{lem} \label{u}
Let $u(x) = 16 \sqrt{\frac{(x+2)^{x+2}(x+3)^{x+3}(2x+1)^{2x-1}}{x^x(x+1)^{x-3}(2x+3)^2(2x+5)^{2x+5}}}$ be the function defined on $[1,+\infty)$, then $u(x) > 1$.
\end{lem}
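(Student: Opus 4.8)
The plan is to follow the strategy of Lemma~\ref{h}: pass to the logarithm, differentiate twice, and read off the sign from the behaviour at $+\infty$. Since $u>0$ on $[1,+\infty)$ it suffices to prove $\ln u(x)>0$. Writing
\[
2\ln u(x)=8\ln 2+(x+2)\ln(x+2)+(x+3)\ln(x+3)+(2x-1)\ln(2x+1)-x\ln x-(x-3)\ln(x+1)-2\ln(2x+3)-(2x+5)\ln(2x+5),
\]
differentiating, and noting that all the constant contributions cancel, one gets
\[
\frac{d}{dx}\bigl(2\ln u(x)\bigr)=\ln\frac{(x+2)(x+3)(2x+1)^2}{x(x+1)(2x+5)^2}+\frac{4}{x+1}-\frac{4}{2x+1}-\frac{4}{2x+3}.
\]

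Next I would differentiate once more. The logarithmic term becomes rational, and after grouping $\tfrac1{x+2}-\tfrac1x$, $\tfrac1{x+3}-\tfrac1{x+1}$ and $\tfrac{4}{2x+1}-\tfrac4{2x+5}$ one finds
\[
\frac{d^2}{dx^2}\bigl(2\ln u(x)\bigr)=-\frac{2}{x(x+2)}-\frac{2}{(x+1)(x+3)}+\frac{16}{(2x+1)(2x+5)}-\frac{4}{(x+1)^2}+\frac{8}{(2x+1)^2}+\frac{8}{(2x+3)^2}.
\]
The key claim is that this is strictly positive on $[1,+\infty)$, and I expect this to be the main obstacle: each competing piece is only $O(x^{-3})$ as $x\to\infty$, so the terms nearly cancel and the bookkeeping has to be done with care. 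A convenient partial simplification is the identity (via $y=2x+2$) $\tfrac{8}{(2x+1)^2}+\tfrac{8}{(2x+3)^2}-\tfrac{4}{(x+1)^2}=16\,\tfrac{3y^2-1}{y^2(y^2-1)^2}=\tfrac{4(12x^2+24x+11)}{(x+1)^2(2x+1)^2(2x+3)^2}>0$; the remaining three terms $-\tfrac{2}{x(x+2)}-\tfrac{2}{(x+1)(x+3)}+\tfrac{16}{(2x+1)(2x+5)}$ form another single rational function, and it then remains to compare the two, i.e.\ to clear the common (positive) denominator $x(x+1)^2(x+2)(x+3)(2x+1)^2(2x+3)^2(2x+5)$ and check that the resulting low-degree numerator polynomial is positive on $[1,+\infty)$ — for which one can either verify nonnegativity of its coefficients after the shift $x\mapsto x+1$, or bound it crudely term by term as in Lemma~\ref{h}.

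Granting $\tfrac{d^2}{dx^2}\ln u>0$ on $[1,+\infty)$, the function $\tfrac{d}{dx}\ln u$ is strictly increasing there. Since $\tfrac{(x+2)(x+3)(2x+1)^2}{x(x+1)(2x+5)^2}\to 1$ and the three rational terms vanish, $\lim_{x\to+\infty}\tfrac{d}{dx}\ln u(x)=0$, hence $\tfrac{d}{dx}\ln u<0$ on $[1,+\infty)$ and $\ln u$ is strictly decreasing. Finally, expanding $2\ln u(x)$ as $x\to+\infty$ one checks that the coefficients of $x\ln x$, of $\ln x$ and the constant terms all cancel, and the surviving $\ln 2$ contributions combine to $8\ln 2-8\ln 2=0$, so $\lim_{x\to+\infty}u(x)=1$. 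Combining this with the monotonicity just established gives $\ln u(x)>0$, that is $u(x)>1$, for every $x\in[1,+\infty)$, as claimed.
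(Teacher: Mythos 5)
Your proposal is correct and is essentially the argument the paper intends: the paper omits the proof of Lemma \ref{u} precisely because it is proved like Lemma \ref{h}, i.e.\ by logarithmic differentiation, a definite sign for the second derivative of $\ln u$, and the limit $u(x)\to 1$ as $x\to+\infty$ (convexity of $\ln u$ together with $\frac{d}{dx}\ln u\to 0$ forces $\ln u$ to decrease to $0$, hence $u>1$), and your formulas for $\frac{d}{dx}(2\ln u)$ and $\frac{d^2}{dx^2}(2\ln u)$ are correct. The step you flagged but left unfinished does go through: using $-\tfrac{2}{x(x+2)}-\tfrac{2}{(x+1)(x+3)}+\tfrac{16}{(2x+1)(2x+5)}=\tfrac{-6(2x^2+6x+5)}{x(x+1)(x+2)(x+3)(2x+1)(2x+5)}$ and clearing the (positive) denominators, positivity of $\tfrac{d^2}{dx^2}(2\ln u)$ on $[1,+\infty)$ reduces to $96x^5+536x^4+1008x^3+628x^2-87x-135>0$ for $x\geq 1$, which holds since replacing $x$ by $x+1$ yields $96x^5+1016x^4+4112x^3+7828x^2+6817x+2046$, all of whose coefficients are positive.
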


\begin{lem} \label{v}
Let $v(x) = \frac{1}{16} \sqrt{\frac{x^x(x+1)^{x+1}(2x+3)^2(2x+5)^{2x+7}}{(x+2)^{x+6}(x+3)^{x+3}(2x+1)^{2x+1}}}$ be the function defined on $[1,+\infty)$, then $v(x) > 1$.
\end{lem}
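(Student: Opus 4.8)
The plan is to mirror the proof of Lemma~\ref{h}: I would show that $v$ is strictly decreasing on $[1,\infty)$ and that $\lim_{x\to\infty}v(x)=1$, whence $v(x)>1$ for every $x\ge 1$. Set $f(x)=\ln v(x)$, so that
\[
2f(x) = -2\ln 16 + x\ln x + (x+1)\ln(x+1) + 2\ln(2x+3) + (2x+7)\ln(2x+5) - (x+6)\ln(x+2) - (x+3)\ln(x+3) - (2x+1)\ln(2x+1).
\]
Differentiating and simplifying — the additive constant dies, and the rational parts collapse after writing $\tfrac{2(2x+7)}{2x+5}=2+\tfrac{4}{2x+5}$ and $\tfrac{x+6}{x+2}=1+\tfrac{4}{x+2}$ — yields the clean expression
\[
2f'(x) = \ln\frac{x(x+1)(2x+5)^2}{(x+2)(x+3)(2x+1)^2} + \frac{4}{2x+3} + \frac{4}{2x+5} - \frac{4}{x+2}.
\]

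Next I would differentiate once more, obtaining
\[
2f''(x) = \frac1x + \frac1{x+1} + \frac4{2x+5} - \frac1{x+2} - \frac1{x+3} - \frac4{2x+1} + \frac4{(x+2)^2} - \frac8{(2x+3)^2} - \frac8{(2x+5)^2},
\]
and prove $f''>0$ on $[1,\infty)$ by clearing denominators and checking positivity of the resulting numerator polynomial — equivalently, regrouping as $\big(\tfrac1x-\tfrac1{x+3}\big)+\big(\tfrac1{x+1}-\tfrac1{x+2}\big)$ and showing this dominant positive part outweighs the remainder $\big(\tfrac4{2x+1}-\tfrac4{2x+5}\big)+\big(\tfrac8{(2x+3)^2}+\tfrac8{(2x+5)^2}-\tfrac4{(x+2)^2}\big)$. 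Since a direct computation gives $2f'(x)\to 0$ as $x\to\infty$ (each logarithmic argument tends to $1$ and each rational term to $0$), the positivity of $f''$ forces $f'(x)<0$ for every finite $x\ge 1$; hence $f$, and therefore $v$, is strictly decreasing on $[1,\infty)$.

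Finally I would compute $\lim_{x\to\infty}v(x)$ without Stirling, by factoring the leading power of $x$ out of each linear factor and using $(1+a/x)^{x}\to e^{a}$. The exponents of the linear factors sum to $4x+10$ in both numerator and denominator, so the powers of $x$ cancel; the powers of $2$ give $2^{2x+7}/2^{2x+1}=2^{6}$; the leading coefficients of $(2x+3)^2(2x+5)^{2x+7}$ against $(2x+1)^{2x+1}$ contribute a further factor $4$; and the exponential corrections pair up to $e^{6}/e^{6}=1$. Thus the quantity under the square root tends to $4\cdot 2^{6}=256$, so $v(x)\to\tfrac1{16}\sqrt{256}=1$. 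Combined with the monotonicity, this yields $v(x)>1$ for all $x\ge 1$.

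The main obstacle is the verification that $f''>0$: although the computation is entirely elementary, the leading $1/x$ and $1/x^2$ coefficients of $2f''(x)$ cancel, so the positivity is only visible at order $1/x^3$ and must be established carefully — either by the honest common-denominator polynomial-positivity argument above, or by combining the monotone-difference estimates for the linear terms with a crude upper bound for the three square terms. Everything else is routine and exactly parallel to the proof of Lemma~\ref{h}.
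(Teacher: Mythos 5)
Your outline is exactly the route the paper intends (the paper omits this proof, stating it is proved "similarly to Lemma \ref{h}"): set $f=\ln v$, show $f''>0$, note $f'\to 0$ at infinity to conclude $f'<0$, hence $v$ is strictly decreasing, and compute $\lim_{x\to\infty}v(x)=1$ to get $v>1$. Your expressions for $2f'$ and $2f''$ are correct, the limit computation is correct (the $x$-exponents $4x+10$ cancel, the powers of $2$ contribute $2^{2}\cdot 2^{2x+7}/2^{2x+1}=2^{8}=256$, and the exponential corrections give $e^{6}/e^{6}=1$), and the final deduction from $f''>0$ and $f'\to 0$ is sound.

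The gap is in the only step that carries real weight, the positivity of $f''$, and your assessment of its difficulty is off in a way that invalidates one of your two proposed ways of closing it. Expanding $2f''$ at infinity, the cancellation does not stop at order $x^{-2}$: the coefficients of $x^{-2}$, $x^{-3}$ \emph{and} $x^{-4}$ all vanish, and $2f''(x)=6x^{-5}+O(x^{-6})$. Concretely, your "dominant positive part" equals $\tfrac{3}{x(x+3)}+\tfrac{1}{(x+1)(x+2)}$, while the remainder is $\tfrac{16}{(2x+1)(2x+5)}+\bigl[\tfrac{8}{(2x+3)^2}+\tfrac{8}{(2x+5)^2}-\tfrac{4}{(x+2)^2}\bigr]$; the two agree to within $O(x^{-5})$, and the bracketed term is itself nonnegative (convexity of $t\mapsto t^{-2}$, since $\tfrac{4}{(x+2)^2}=\tfrac{16}{(2x+4)^2}$), so a "crude upper bound for the three square terms" cannot succeed: any estimate that loses something at order $x^{-2}$, $x^{-3}$ or $x^{-4}$ destroys the inequality. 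Numerically $2f''$ is positive but razor-thin (about $1.2\times 10^{-1}$ at $x=1$, $4\times 10^{-3}$ at $x=3$, $3\times 10^{-5}$ at $x=10$), which confirms that no soft estimate will do. Of your two routes only the honest one is viable: clear denominators against $Q(x)=x(x+1)(x+2)^2(x+3)(2x+1)(2x+3)^2(2x+5)^2$ and verify that the numerator---a polynomial whose degree-$9$ through degree-$6$ coefficients cancel, leaving a quintic with leading term $192x^5$---is positive on $[1,\infty)$, for instance by expanding in powers of $x-1$ and checking signs. Until that finite but genuinely nontrivial computation (or an equally sharp grouping) is carried out, the claim $f''>0$, and with it the lemma, remains unproved.
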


\begin{proof}[Proof of Theorem\ref{decreasing}]

Let $a_n = \frac{\text{Area}(T_{min}^n)}{\text{Area}(S^n)}$, then $a_2 = \frac{\pi}{2}, a_3 = \frac{8}{3\sqrt{3}}, a_4 = \frac{3}{2}, a_5 = \frac{48\sqrt{15}}{125}, a_6 = \frac{15\pi}{32}, a_7 = \frac{768\sqrt{21}}{2401}$, and we have $a_2 > a_3 > a_4 > a_5 > a_6 > a_7$.

By using the formula (\ref{+2 ratio}), it's not hard to check that for all positive integers $k$,
\begin{align*}
& a_{2k + 4} = \frac{(2k+1)(2k+3)}{4(k+1)^2}a_{2k = }\frac{4k^2 + 8k + 3}{4k^2 + 8k + 4} a_{2k} < a_{2k}, \\
& a_{2k+5} = 4\sqrt{\frac{(k+2)^{k+2}(k+3)^{k+3}(2k+1)^{2k+1}}{k^k(k+1)^{k+1}(2k+5)^{2k+5}}} a_{2k+1} < a_{2k+1}.
\end{align*}

Let $b_n=\frac{a_{n+1}}{a_n}$, by lemma \ref{u},\ref{v}, we know that
\begin{align*}
\frac{b_{2k+4}}{b_{2k}} &= \frac{a_{2k+5}}{a_{2k+4}} \cdot \frac{a_{2k}}{a_{2k+1}} = 16\sqrt{\frac{(k+2)^{k+2}(k+3)^{k+3}(2k+1)^{2k-1}}{k^k(k+1)^{k-3}(2k+3)^2(2k+5)^{2k+5}}} = u(k) > 1, \\
\frac{b_{2k+5}}{b_{2k+1}} &= \frac{a_{2k+6}}{a_{2k+5}} \cdot \frac{a_{2k+1}}{a_{2k+2}} = \frac{1}{16} \sqrt{\frac{k^k(k+1)^{k+1}(2k+3)^2(2k+5)^{2k+7}}{(k+2)^{k+6}(k+3)^{k+3}(2k+1)^{2k+1}}} =v(k) > 1.
\end{align*}

Thus the four subsequences $\{b_{i+4j}\}_{j = 1,2,3, \cdots}$ ($i = 2,3,4,5$) are increasing sequences.

Recall the Euler's product formula: \begin{align*}
\sin(\pi z) = \pi z \prod_{i = 1}^{\infty} (1 - \frac{z^2}{i^2}).
\end{align*}

Let $z = \frac{1}{2}, \frac{1}{4}$, we have the following equality:
\begin{align*}
\prod_{i = 1}^{\infty} (1 - \frac{1}{4i^2}) &= \frac{2}{\pi},\\
\prod_{i = 1}^{\infty} (1 - \frac{1}{16i^2}) &= \frac{2\sqrt{2}}{\pi}, \\
\prod_{i = 1}^{\infty} (1 - \frac{1}{4(2i+1)^2}) &= \frac{\prod\limits_{i = 1}^{\infty} (1 - \frac{1}{4i^2})}{(1-\frac{1}{4})\prod\limits_{i = 1}^{\infty} (1 - \frac{1}{16i^2})} =  \frac{2\sqrt{2}}{3}.
\end{align*}

One can check that \begin{align*}
a_{2k+4j+1} = 4^j \sqrt{\frac{(k+2j)^{k+2j}(k+2j+1)^{k+2j+1}(2k+1)^{2k+1}}{k^k(k+1)^{k+1}(2k+4j+1)^{2k+4j+1}}} a_{2k+1}    
\end{align*}
for positive integer $j$.

It's not hard to see that \begin{align*}
\lim\limits_{j \to \infty} 4^j \sqrt{\frac{(k+2j)^{k+2j}(k+2j+1)^{k+2j+1}(2k+1)^{2k+1}}{k^k(k+1)^{k+1}(2k+4j+1)^{2k+4j+1}}} = \sqrt{\frac{(2k+1)^{2k+1}}{k^k(k+1)^{k+1}2^{2k+1}}}.
\end{align*}

Therefore we have the following limits:
\begin{align*}
\lim\limits_{i \to \infty} a_{2+4i} &= a_2 \cdot \prod\limits_{i = 1}^{\infty} (1 - \frac{1}{16i^2}) = \sqrt{2}, \\ 
\lim\limits_{i \to \infty} a_{3+4i} &= a_3 \cdot \sqrt{\frac{3^3}{1^1 \cdot 2^2 \cdot 2^3}} =  \sqrt{2}, \\ 
\lim\limits_{i \to \infty} a_{4+4i} &=  a_4 \cdot \prod_{i = 1}^{\infty} (1 - \frac{1}{4(2i+1)^2}) = \sqrt{2}, \\
\lim\limits_{i \to \infty} a_{5+4i} &= a_5 \cdot \sqrt{\frac{5^5}{2^2 \cdot 3^3 \cdot 2^5}} = \sqrt{2}.
\end{align*}

Hence we know that the sequence $\{a_n\}$ converges to $\sqrt{2}$, and the sequence $\{b_n\}$ converges to $1$ (since $b_n = \frac{a_{n+1}}{a_n}$).

Since for $i = 2,3,4,5$, $\{b_{i+4j}\}_{j = 1,2,3,\cdots}$ is an increasing sequence, we claim that $b_n < 1$ for all $n$, and therefore the sequence $\{a_n\}_{n \geq 2}$ is a decreasing sequence.

\end{proof}

We have the following area lower bound for Clifford hypersurface:

\begin{coro} \label{CH lower bound}
All $n-$dimensional Clifford hypersurfaces have area at least $\sqrt{2} \text{Area}(S^n)$.
\end{coro}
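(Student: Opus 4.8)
The plan is to read this off directly from Theorem \ref{decreasing} together with the definition of $T_{min}^n$. Fix $n \geq 2$ and let $T = T_{p,q} \subset S^{n+1}$ be an arbitrary $n$-dimensional Clifford hypersurface, so that $p + q = n$ with $p,q \geq 1$. By the very definition of $T_{min}^n$ as the area-minimizer among the finitely many Clifford hypersurfaces $\{T_{p,q} : p+q = n,\ p,q \geq 1\}$, we have $\text{Area}(T) \geq \text{Area}(T_{min}^n)$. (The explicit identity of $T_{min}^n$ from Proposition \ref{min} is not even needed here, only its defining minimality property.)

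Next I would invoke Theorem \ref{decreasing}: the sequence $a_n := \text{Area}(T_{min}^n)/\text{Area}(S^n)$ is strictly decreasing and converges to $\sqrt{2}$ as $n \to \infty$. A strictly decreasing sequence with limit $L$ satisfies $a_n > L$ for every index, so $a_n > \sqrt{2}$ for all $n \geq 2$; in particular $\text{Area}(T_{min}^n) > \sqrt{2}\,\text{Area}(S^n)$. Chaining the two inequalities gives
\[
\text{Area}(T) \;\geq\; \text{Area}(T_{min}^n) \;=\; a_n\,\text{Area}(S^n) \;>\; \sqrt{2}\,\text{Area}(S^n),
\]
which is stronger than (and hence implies) the claimed bound $\text{Area}(T) \geq \sqrt{2}\,\text{Area}(S^n)$.

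There is essentially no obstacle in this corollary itself: all the real work sits in Theorem \ref{decreasing} and Proposition \ref{min} (and behind those, in the elementary but delicate monotonicity Lemmas \ref{p}--\ref{v}, Euler's product formula for $\sin$, and Stirling's approximation used to pin down the limit $\sqrt{2}$). If one wanted the statement with strict inequality, the argument above already delivers it; the non-strict form is stated presumably because that is all that is needed in applications. I would therefore keep the proof of the corollary to the two-line deduction above and not repeat any of the computational content.
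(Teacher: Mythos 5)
Your deduction is correct and is exactly the argument the paper intends: the corollary follows immediately from the defining minimality of $T_{min}^n$ together with Theorem \ref{decreasing}, since a strictly decreasing sequence with limit $\sqrt{2}$ stays strictly above $\sqrt{2}$. The paper leaves this unwritten for precisely that reason, so your two-line proof matches the intended route (and correctly notes the inequality is in fact strict).
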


\bibliography{main}
\bibliographystyle{acm}

\end{document}